\documentclass{amsart}

\usepackage{graphicx}
\usepackage{float}
\usepackage{subcaption}
\usepackage{caption}
\usepackage{amsmath,amssymb,amstext}
\usepackage{mathrsfs}
\usepackage{multirow}
\usepackage{algorithm}
\usepackage{algorithmic}
\usepackage[multiple]{footmisc} 
\usepackage{xcolor}
\usepackage[colorlinks=true, linkcolor=blue, citecolor=red, urlcolor=blue]{hyperref}
\usepackage[top=2.5cm, bottom=2.5cm, right=3.5cm, left=3.5cm]{geometry}
\usepackage{booktabs}

\newtheorem{theorem}{Theorem}[section]
\newtheorem{lemma}[theorem]{Lemma}
\theoremstyle{definition}

\newtheorem{assumption}[theorem]{Assumption}
\theoremstyle{remark}

\numberwithin{equation}{section}

\begin{document}
	
	\title[Zeroth-Order Algorithms for Composite Problems]{A Unified Zeroth-Order Proximal Newton-Type Framework for Composite Optimization}
	\thanks{*Corresponding author. This work is supported by the National Natural Science Foundation of China grant 12371307.}

	\author{Zekun Liu}
	\address{School of Mathematical Sciences, Shanghai Jiao Tong University, Shanghai 200240, China.}
	\email{sjtu\_lzk@sjtu.edu.cn}

	\author{Jinyan Fan*}
	\address{School of Mathematical Sciences,  and MOE-LSC, Shanghai Jiao Tong University, Shanghai 200240, China.}
	\email{jyfan@sjtu.edu.cn}
	
	\subjclass[2020]{Primary 65K10, 90C30, 90C56, 90C53}
	
	\keywords{Derivative-free optimization, Composite minimization, Proximal Newton-type method, Local superlinear convergence}
	
	\begin{abstract}
		We propose a unified derivative-free proximal Newton-type algorithm framework for solving composite optimization problems formulated as the sum of a black-box function and a known regularization term. We establish the iteration and oracle complexity bounds for the algorithm to attain an $\epsilon$-optimal solution under both nonconvex and strongly convex settings.
		We also establish its local R-superlinear convergence based on the Dennis--Mor\'{e} condition, and theoretically address an open problem by showing that the BFGS scheme is more compatible with finite-difference gradient estimators than with smoothing-based ones.
		Numerical experiments are further presented to demonstrate the efficiency of the proposed method.
	\end{abstract}

	\maketitle
	
\section{Introduction}\label{sec:Intro}

Composite optimization problems arise frequently in a wide range of applications such as  asset risk management~\cite{ZORO2022}, image restoration~\cite{Globalized2021}, binary classification~\cite{IPZOPM2024}, hyperparameter tuning~\cite{ZOPG2023}, and black-box adversarial attacks~\cite{ZORO2022}. In this paper, we consider the composite optimization problem
\begin{equation}\label{problem}
	\min_{x\in \mathbb{R}^n}  F(x) = f(x) + h(x),
\end{equation}
where $f:\mathbb{R}^n\to \mathbb{R}$ is continuously differentiable, black-box and possibly nonconvex, with only function value access, and $h: \mathbb{R}^n \to \mathbb{R} \cup \{ +\infty \} $ is white-box, convex but not necessarily smooth. In applications, $h$ is often adopted to encode the prior structure of the problem. For instance, the $\ell_1$-norm is commonly used to induce sparsity. Since the gradient $\nabla f$ is either unavailable or computationally expensive, conventional proximal gradient methods are not applicable in the zeroth-order setting.

Derivative-free optimization methods have been studied for decades. Interested readers are referred to the review articles~\cite{DFOReview2019,ZOReview2020} for a brief overview and to seminal monographs~\cite{DFObook2009,DFObook2017} for comprehensive details.
Over the past two decades, numerous methods have been proposed to solve~\eqref{problem} by only using the zeroth-order oracle of $f$ and the proximal oracle of $h$. Among these methods, the most natural and widely adopted ones are the vanilla proximal gradient methods built upon various gradient estimations that rely solely on function evaluations.
For example, Huang et al.~\cite{ZOPSVRG2019} leveraged Gaussian smoothing~\cite{GS2017} and central differences to approximate $\nabla f$, and incorporated variance reduction techniques of SVRG~\cite{SVRG2014,VR2016} and SAGA~\cite{SAGA2014,VR2016} to solve nonconvex nonsmooth finite-sum composite optimization problems.
The spherical smoothing technique~\cite{SS2005} is also leveraged for gradient estimation in proximal gradient methods (cf.~\cite{ZOPG2023,UniZProxSG2020}).
Assuming that $h$ reduces to an indicator function, Balasubramanian and Ghadimi~\cite{GSH2022} investigated nonconvex Lipschitz smooth stochastic problems via Gaussian smoothing, with a particular focus on high-dimensional settings and saddle-point avoidance.  Kungurtsev and Rinaldi~\cite{DSZProxSG2021} proposed a proximal method based on double Gaussian smoothing to tackle the nonconvex and nonsmooth problem~\eqref{problem}.
It was later enhanced by Pougkakiotis and Kalogerias~\cite{ZOPG2023} into a single two-point Gaussian smoothing scheme for a much broader class of composite problems with $f$ weakly convex.
Cai et al.~\cite{ZORO2022} proposed a zeroth-order regularized optimization method that employs an adaptive randomized gradient estimator combined with an inexact proximal-gradient scheme. The method requires only a small number of objective function evaluations when the underlying gradient is approximately sparse.

Unfortunately, most existing methods, including but not limited to those mentioned above, only estimate the gradient and thus suffer from slow convergence. They fail to attain favorable local superlinear convergence (or even faster rates) near the minimizer under strongly convexity. To address this limitation, it is natural to exploit second-order information to attain fast convergence.
In~\cite{GSH2022}, Balasubramanian and Ghadimi proposed a zeroth-order stochastic cubic regularized Newton method, in which the Hessian is approximated via the second-order Gaussian Stein identity.
Though the second-order Gaussian smoothing technique appears to require a smaller number of function evaluations for Hessian estimation, it actually requires at least $\mathcal{O}(n/\epsilon^2)$ function evaluations per iteration to obtain an $\epsilon$ second-order stationary point (cf.~\cite[Theorem 9]{GSH2022}).
In~\cite{ZOCNM2023}, Doikov and Grapiglia developed first- and zeroth-order implementations
of the regularized Newton method with lazy Hessian approximations.
The lazy Hessian updating scheme reuses the previously computed Hessian approximation over multiple iterations and requires $\mathcal{O}(n^2)$ function evaluations via finite differences.
It is shown that their method achieves local R-superlinear convergence under standard assumptions. However, the introduction of the cubic regularization term makes its subproblems computationally expensive to solve.
To exploit a light-weight approximation of the Hessian, Liu et al.~\cite{IPZOPM2024}
approximated the gradient and diagonal Hessian simultaneously via central differences, and proposed a preconditioned zeroth-order proximal gradient method leveraging diagonal second-order information.
Note that these methods either only incorporate partial second-order information, or incur prohibitive function evaluation costs when the dimension $n$ is large.

\paragraph{\textbf{Contributions}}
This paper develops an efficient zeroth-order algorithm for solving the composite optimization problems, and establishes its global convergence, local superlinear convergence, and oracle complexity guarantees. The main contributions are summarized as follows.

First, we propose a unified zeroth-order proximal Newton-type framework for solving~\eqref{problem},
which accommodates a broad class of gradient and Hessian estimation methods.
Rigorous convergence results can be established whenever these estimators satisfy mild and standard conditions.
To the best of our knowledge, this is the first work dedicated to proximal Newton-type algorithms under the full zeroth-order setting.
Most existing studies on proximal Newton methods concentrate on the first-order regime.
Although several existing zeroth-order works incorporate second-order approximate information for composite optimization, they are not formulated within the standard proximal Newton-type framework. Instead, they are predominantly built upon cubic Newton or regularized cubic Newton schemes, and the available literature in this direction remains rather limited.
Further elaborations are provided in Section~\ref{sec:ZOPN}.

Second, we derive an iteration complexity of $\mathcal{O}(1/\epsilon^2)$ and an oracle complexity of $\mathcal{O}(n/\epsilon^2)$ for the proposed algorithm to attain an $\epsilon$-stationary solution of the nonconvex objective $F$.
When $F$ is strongly convex, we prove that the algorithm converges R-linearly to the minimizer in terms of both objective values and iterates. Based on this property, we further establish the  iteration and oracle complexity bound $\mathcal{O}(n\log(1/\epsilon))$ for obtaining an $\epsilon$-optimal solution.
All these theoretical results are presented in Section~\ref{sec:Conv}.

Third, under standard assumptions, we establish the local R-superlinear convergence of the proposed framework. Such favorable superlinear convergence results are rare in derivative-free optimization. Although R-superlinear convergence has been established in several prior works, those results hold for distinct algorithmic structures. For instance, the ZO-CNM algorithm in~\cite{ZOCNM2023}
achieves local R-superlinear convergence via lazy Hessian updates under a regularized cubic Newton framework, where its analysis relies on finite-difference approximation of the Hessian. In contrast, we adopt a different theoretical route based on the Dennis--Mor\'e condition. Furthermore, our convergence result is unified and applicable to a broad family of Hessian estimation strategies.
All these results are shown in Section~\ref{sec:Local}.

Fourth, we partially address an open problem raised by Bollapragada and Wild~\cite{ZOQN2023} and Bollapragada et al. \cite{Bollapragada2024}, namely, which gradient estimation strategy is preferable for derivative-free quasi-Newton methods.
By establishing upper bounds on the number of samples required for gradient approximation,
we theoretically demonstrate for the first time that the BFGS scheme is better suited to deterministic finite-difference gradient estimators than to Gaussian and spherical smoothing counterparts. Existing studies have frequently employed BFGS combined with finite or central difference gradient estimation, yet have rarely provided theoretical justification for preferring  finite-difference schemes over their smoothing-based counterparts.
Our work provides a theoretical interpretation of this empirical phenomenon in Section~\ref{subsec:FDvsGS}.

All numerical results are reported in Section~\ref{sec:experiment}.

\paragraph{\textbf{Notations and assumptions}}
The set of all symmetric positive definite matrices in $\mathbb{R}^{n\times n}$ is denoted by $\mathbb{S}_{++}^n$. We use $\| \cdot \|$ to denote the standard Euclidean norm for vectors or the spectral norm for matrices, and define $\| x\|_A:=\sqrt{x^{\top}Ax}$ as the induced norm associated with $A\in \mathbb{S}_{++}^n$.
We denote the standard inner product $\langle x,y \rangle:=x^{\top}y $ for $x, y \in \mathbb{R}^n$. For $H, G\in \mathbb{R}^{n\times n}$, $H\succ G$ (resp., $H\succeq G$) means $H-G$ is positive definite (resp., positive semi-definite). The distance from a point $x$ to a nonempty set $A$ is defined as $\mathrm{dist}(x,A):=\inf_{y\in A} \| x-y \| $. The subgradient of a convex function $h$ at $x$ is denoted as $\partial h(x)$.   We use $[n]$ to represent the set $\{1,2,\cdots,n\}$ for $n\in\mathbb{N}_+$,  and $\lceil \cdot \rceil$, $\lfloor \cdot \rfloor$ to denote the ceiling and floor operators, respectively.
We write $[ \cdot ]_i $ for the $i$-th entry of a vector, and $[ \cdot ]_{ij} $ for the $(i,j)$-th entry of a matrix.
The big-$\mathcal{O}$ notation $\mathcal{O}(\cdot)$ stands for the upper bound,
the big-$\Theta$ notation $\Theta(\cdot)$ characterizes an order-tight bound,
and the big-$\varOmega$ notation $\varOmega(\cdot)$ corresponds to a lower bound.
$\mathcal{N}( 0,I)$ denotes the standard multivariate normal distribution, and $\mathcal{U}(\mathcal{S}(0,1))$ the multivariate uniform distribution on the unit sphere centered at the origin with radius one. The standard canonical basis is denoted by $\{e_i\}_{i=1}^n$.  Finally, we define the indicator function of a nonempty set $A$ by $\chi_A(x)=0$ if $x\in A$ and $\chi_A(x)=+\infty$ otherwise.

For functions $f, h$ and $F$, we make the following standard assumptions.

\begin{assumption}\label{as:problem1}
	The gradient $\nabla f$ is $L_f$-Lipschitz continuous.
	The regularization term	$h$ is proper, closed, and convex, but not necessarily differentiable.  Furthermore, the objective $F$ is bounded below by $F_{\mathrm{low}}$.
\end{assumption}

\begin{assumption}\label{as:problem2}
	The Hessian $\nabla^2 f$ satisfies $\nabla^2 f \succeq \mu I$ for some $\mu>0$, and is Lipschitz continuous with constant $L_H$.
\end{assumption}

\section{A Unified Zeroth-Order Proximal Newton-Type Framework}\label{sec:ZOPN}
In this section, we propose a unified zeroth-order proximal Newton-type framework for solving problem \eqref{problem}.
We leave the gradient and Hessian approximations in an abstract form to accommodate a variety of zeroth-order approaches.

Denote the sampling radius at the $k$-th iteration by $\Delta_k>0$, and let the corresponding approximate gradient be $g_k:=g_{\Delta_k}(x_k)$.
We impose the following assumptions on the approximate gradient and Hessian.

\begin{assumption}\label{as:ZOPN-g}
	The approximate gradient  $g_k$ satisfies the \textit{fully linear}-like property (cf.~\cite[Section~6.1]{DFObook2009}):
	\begin{equation}\label{eq:gradienterror}
		\|g_k - \nabla f(x_k)\| \le \kappa_{\mathrm{eg}}\Delta_k,
	\end{equation}
	where $\kappa_{\mathrm{eg}}>0$ is a constant independent of $k$.
\end{assumption}

\begin{assumption}\label{as:ZOPN-H}
	The approximate  Hessian $H_k$ satisfies
	\begin{equation}\label{eq:bound}
		\underline{\kappa}I \preceq H_k \preceq \bar{\kappa}I, \quad \forall k\in \mathbb{N},
	\end{equation}
	where $\bar{\kappa}\ge \underline{\kappa}>0$ are constants.
\end{assumption}

\begin{assumption}\label{as:ZOPN-DM}
	The sequence  $\{H_k\}$ satisfies the Dennis--Mor\'e condition (cf.~\cite{Dennis1974DM}):
	\begin{equation}\label{eq:DM}
		\varrho_k :=	\frac{\|(H_k - \nabla^2 f(x^{\star}))(x_{k+1}-x_k)\|}{\|x_{k+1} - x_k\|} \to 0 \quad \text{as } k \to \infty,
	\end{equation}
	where $x^{\star}$ denotes the minimizer of problem~\eqref{problem}.
\end{assumption}

We emphasize that typical deterministic gradient estimators satisfy~\eqref{eq:gradienterror}, including the standard forward difference, simplex gradients, and linear interpolation; see \cite{DFObook2009,GE2022} for more comprehensive discussions.
For example, consider the central difference
\begin{equation*}
	g_k^{\mathrm{CD}}:=\sum_{i=1}^n \frac{f(x_k+\Delta_k e_i)-f(x_k-\Delta_k e_i)}{2\Delta_k}e_i,
\end{equation*}
which admits a second-order error bound (see, e.g.,~\cite[Lemma~4]{IPZOPM2024})
\begin{equation*}
	\|g_k^{\mathrm{CD}} - \nabla f(x_k)\| \le \frac{\sqrt{n}L_H}{6}\Delta_k^2.
\end{equation*}
The bound is naturally consistent with~\eqref{eq:gradienterror} provided that the sampling radius satisfies $\Delta_k\le 1$.
In addition, Assumptions~\ref{as:ZOPN-H} and \ref{as:ZOPN-DM} can be fulfilled by Hessian approximations via zeroth-order approaches, and we will discuss two popular Hessian approximations and their consistency with Assumption~\ref{as:ZOPN-DM} in detail in Section~\ref{sec:Local}.

At the $k$-th iteration, we construct the quadratic model of $f$ at $x_k$:
\begin{equation*}
	\tilde{f}(x) = f(x_k) +  g_k^{\top}   (x-x_k) + \frac{1}{2}(x-x_k)^{\top}H_k (x-x_k).
\end{equation*}
Note that exactly minimizing $\tilde f(x)+h(x)$ can be computationally expensive in many cases.
Instead, we inexactly solve the subproblem
\begin{equation}\label{equ:subproblem}
	\min_{d\in \mathbb{R}^n}  g_k^{\top} d + \frac{1}{2}d^{\top}H_k d + h(x_k + d)
\end{equation}
to obtain the step $d_k$.
Thus there exists a subgradient residual $r_k$ satisfying
\begin{equation}\label{equ:residual}
	r_k \in g_k + H_k d_k+\partial h(x_k+d_k).
\end{equation}
Similar to the inexact condition given in~\cite{Inexact2021},
we accept $d_k$ if it satisfies
\begin{equation}\label{equ:inexactcriterion}
	\|r_k \|_{H_k^{-1}}\le ( 1-\gamma ) \|d_k\|_{H_k},
\end{equation}
where $\gamma \in (0,1]$ is a constant.
Then a line search along $d_k$ is performed such that a sufficient decrease in the objective function value could be achieved.

It follows from Assumption~\ref{as:ZOPN-g} and Young's inequality that
\begin{equation*}
	(\nabla f(x_k) - g_k)^{\top}d_k
	\leq \frac{1}{2\underline{\kappa}\gamma} \| \nabla f(x_k)- g_k \|^2 + \frac{\underline{\kappa}\gamma}2 \| d_k  \|^2 \leq \frac{\underline{\kappa}\gamma}2 \| d_k  \|^2 + \frac{\kappa_{\mathrm{eg}}^2}{2\underline{\kappa}\gamma}\Delta_k^2.
\end{equation*}
So by Assumption~\ref{as:problem1}, for $t\in (0,1]$,
	\begin{align}\label{equ:lemdd1}
		F(x_k+td_k)-F(x_k)
		&=f(x_k+td_k)-f(x_k)+h(t(x_k+d_k)+(1-t)x_k)-h(x_k)  \nonumber \\
		&\leq t (\nabla f(x_k)^{\top}d_k+h(x_k+d_k)-h(x_k) ) +\frac{1}{2}L_f t^2\left\| d_k\right\|^2 \nonumber \\
		&=t (g_k^{\top}d_k+h(x_k+d_k)-h(x_k)  )  +t (\nabla f(x_k)-g_k )^{\top}d_k+\frac{1}{2}L_f t^2 \| d_k \|^2 \nonumber \\
		&\leq t (g_k^{\top}d_k+h(x_k+d_k)-h(x_k)  )
		+\frac{t(\underline{\kappa}\gamma+tL_f)}{2}  \| d_k \|^2 + \frac{t\kappa_{\mathrm{eg}}^2}{2\underline{\kappa}\gamma}\Delta_k^2.
	\end{align}
Define
\begin{equation*}
	\Phi_k:=g_k^{\top}d_k+h(x_k+d_k)-h(x_k).
\end{equation*}
Based on~\eqref{equ:lemdd1}, we define the sufficient decrease condition
\begin{equation}\label{equ:lambda}
	F(x_k+t_kd_k)-F(x_k)\le c_1 t_k \Phi_k + n c_2 \Delta_k^2,
\end{equation}
where $c_1 \in (0,1/2)$ and $c_2>0$ are constants.

The lemma below indicates that~\eqref{equ:lambda} can be satisfied if the step size is smaller than some threshold value.

\begin{lemma}\label{lem:stepsizeuppbd}
	Suppose that Assumptions~\ref{as:problem1}, \ref{as:ZOPN-g}, and \ref{as:ZOPN-H} hold, and that $d_k$ satisfies~\eqref{equ:inexactcriterion}. Then~\eqref{equ:lambda} holds if
	\begin{equation*}
		t_k \le \min \left\lbrace 1, \frac{\underline{\kappa}\gamma (1-2c_1)}{L_f}, \frac{2\underline{\kappa}\gamma nc_2}{\kappa_{\mathrm{eg}}^2} \right\rbrace.
	\end{equation*}
	That is, if the backtracking line search starts with an initial step size $\bar{t}>0$ and is reduced by a factor $\beta \in (0,1)$, then we have
	\begin{equation}\label{equ:tlowerbd}
		\underline{t}:= \beta \min \left\lbrace 1, \frac{\underline{\kappa}\gamma (1-2c_1)}{L_f}, \frac{2\underline{\kappa}\gamma nc_2}{\kappa_{\mathrm{eg}}^2} \right\rbrace \le t_k \le \bar{t}.
	\end{equation}
\end{lemma}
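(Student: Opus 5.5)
The plan is to start from inequality~\eqref{equ:lemdd1} and show that its right-hand side is at most $c_1 t\Phi_k + nc_2\Delta_k^2$ once $t$ is below the stated threshold. To do this, we need an upper bound on $\Phi_k$ in terms of $-\|d_k\|^2$, so that the quadratic terms in $\|d_k\|$ can be absorbed.

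The first step is to bound $\Phi_k$ using the inexactness condition. By~\eqref{equ:residual}, $r_k - g_k - H_kd_k \in \partial h(x_k+d_k)$. The subgradient inequality at $x_k+d_k$, evaluated at $x_k$, then gives
\[
h(x_k) \ge h(x_k+d_k) - (r_k - g_k - H_k d_k)^{\top} d_k ,
\]
that is,
\[
\Phi_k \le -\|d_k\|_{H_k}^2 + r_k^{\top} d_k .
\]
Cauchy--Schwarz in the $H_k$-geometry gives $r_k^\top d_k \le \|r_k\|_{H_k^{-1}}\|d_k\|_{H_k}$. Combining this with~\eqref{equ:inexactcriterion} yields $r_k^\top d_k \le (1-\gamma)\|d_k\|_{H_k}^2$. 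Hence, by Assumption~\ref{as:ZOPN-H},
\[
\Phi_k \le -\gamma\|d_k\|_{H_k}^2 \le -\gamma\underline{\kappa}\|d_k\|^2 .
\]
This is the key step. In particular, $\|d_k\|^2 \le -\Phi_k/(\gamma\underline{\kappa})$.

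Next, substitute into~\eqref{equ:lemdd1}. We want
\[
t\Phi_k + \tfrac{t(\underline{\kappa}\gamma + tL_f)}{2}\|d_k\|^2 \le c_1 t\Phi_k .
\]
Equivalently, $(1-c_1)\Phi_k + \tfrac{\underline{\kappa}\gamma + tL_f}{2}\|d_k\|^2 \le 0$. Using $\Phi_k \le -\gamma\underline{\kappa}\|d_k\|^2$, it suffices that
\[
(1-c_1)\gamma\underline{\kappa} \ge \tfrac{\underline{\kappa}\gamma + tL_f}{2},
\]
which rearranges to $t \le \underline{\kappa}\gamma(1-2c_1)/L_f$. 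Here $c_1<1/2$ makes the threshold positive.

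For the error term, $\frac{t\kappa_{\mathrm{eg}}^2}{2\underline{\kappa}\gamma}\Delta_k^2 \le nc_2\Delta_k^2$ holds precisely when $t \le 2\underline{\kappa}\gamma nc_2/\kappa_{\mathrm{eg}}^2$. The condition $t\le 1$ is needed because~\eqref{equ:lemdd1} uses convexity of $h$ along $t\in(0,1]$. Together these give~\eqref{equ:lambda}.

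Finally, for the backtracking bound~\eqref{equ:tlowerbd}, the upper bound $t_k\le\bar t$ is immediate. For the lower bound, if $t_k$ is accepted at the first trial then $t_k=\bar t$, which is at least $\underline{t}$ under the implicit assumption $\bar t \ge \beta\min\{\cdots\}$ (e.g.\ $\bar t\le 1$ or $\bar t$ exceeding the threshold). Otherwise, the previous trial $t_k/\beta$ failed~\eqref{equ:lambda}, so by the first part it exceeded the threshold, giving $t_k \ge \beta\min\{\cdots\}$. The main subtlety I expect is the case where $\bar t$ is below the threshold. I will handle it by noting that the line search then accepts $\bar t$ immediately, so the bound is read as $t_k \ge \min\{\bar t,\underline{t}\}$, which reduces to $\underline{t}$ for the standard choice $\bar t = 1$.
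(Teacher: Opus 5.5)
Your proof is correct and follows essentially the same route as the paper: the subgradient inequality combined with the inexactness criterion gives $\Phi_k \le -\gamma\|d_k\|_{H_k}^2$, and this is substituted into \eqref{equ:lemdd1} (the paper replaces $\|d_k\|^2$ by $-\Phi_k/(\underline{\kappa}\gamma)$, you bound $\Phi_k$ by $-\gamma\underline{\kappa}\|d_k\|^2$, which is the same computation). Your explicit argument for the backtracking bound, including the observation that it needs $\bar t \ge \underline{t}$, usefully spells out what the paper leaves implicit, with one slip: $\bar t \le 1$ does not guarantee $\bar t \ge \beta\min\{\cdots\}$; the correct sufficient condition is $\bar t \ge \beta$ (e.g.\ $\bar t = 1$, as you note at the end).
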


\begin{proof}
	Since $h$ is convex, by~\eqref{equ:residual}, we have
	\begin{equation*}
		h(x_k+d_k)- (r_k - g_k - H_kd_k  )^{\top}d_k \le h(x_k).
	\end{equation*}
	This, together with~\eqref{equ:inexactcriterion}, implies that
		\begin{align}\label{equ:lemdd2}
			\Phi_k &\le r_k^{\top}d_k - d_k^{\top}H_kd_k
			\le \|r_k\|_{H_k^{-1}}\| d_k\|_{H_k} - d_k^{\top}H_kd_k \nonumber \\
			&\le (1-\gamma ) \| d_k \|_{H_k}^2- d_k^{\top}H_kd_k = -\gamma d_k^{\top}H_kd_k.
		\end{align}
	Combining~\eqref{equ:lemdd1}, \eqref{equ:lemdd2}  and Assumption~\ref{as:ZOPN-H}, we obtain
	\begin{equation}\label{equ:lemstepsize1}
		F(x_k+t_k d_k)-F(x_k)
		\le \frac{t_k}2 \left(1 - \frac{t_kL_f}{\underline{\kappa}\gamma}\right) \Phi_k + \frac{t_k \kappa_{\mathrm{eg}}^2}{2\underline{\kappa}\gamma}\Delta_k^2.
	\end{equation}
	
	If $t_k\le \min\{ \underline{\kappa}\gamma (1-2c_1) / L_f,  2\underline{\kappa}\gamma nc_2 / \kappa_{\mathrm{eg}}^2\}$, then we have
	\begin{equation*}
		\frac{t_k}2 \left(1 - \frac{t_kL_f}{\underline{\kappa}\gamma}\right) \ge c_1 t_k, \quad  \frac{t_k \kappa_{\mathrm{eg}}^2}{2\underline{\kappa}\gamma} \le nc_2.
	\end{equation*}
	Substituting this inequality into~\eqref{equ:lemstepsize1}, we obtain
	\begin{equation*}
		F(x_k+t_k d_k)-F(x_k)\le  c_1 t_k \Phi_k + nc_2 \Delta_k^2.
	\end{equation*}
	That is, the step size $t_k$ satisfies~\eqref{equ:lambda}.
	The proof is completed.
\end{proof}

For typical deterministic gradient estimation schemes (e.g., forward difference, central difference, linear interpolation), we have $\kappa_{\mathrm{eg}}=\Theta(\sqrt{n})$ (cf.~\cite{GE2022}). Therefore, $\underline{t}$ in~\eqref{equ:tlowerbd} is a constant independent of the dimension $n$.

In the following, we illustrate the relationship between the step length and the distance to the set of stationary points.

\begin{lemma}\label{lem:station}
	Suppose that Assumptions~\ref{as:problem1}, \ref{as:ZOPN-g}, and \ref{as:ZOPN-H} hold, and that $d_k$ satisfies \eqref{equ:inexactcriterion}.
	Then, we have
	\begin{equation*}
		\mathrm{dist}(0,\partial F(x_k+d_k))\le (L_f + (2-\gamma)\bar{\kappa})\|d_k\| + \kappa_{\mathrm{eg}} \Delta_k.
	\end{equation*}
\end{lemma}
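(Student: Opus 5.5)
Since $F=f+h$ with $f$ continuously differentiable and $h$ convex, I will use the sum rule $\partial F(x_k+d_k)=\nabla f(x_k+d_k)+\partial h(x_k+d_k)$. The plan is to write down one explicit element of this set, built from the residual in \eqref{equ:residual}, and bound its norm. By \eqref{equ:residual} there is $v_k\in\partial h(x_k+d_k)$ with $v_k=r_k-g_k-H_kd_k$. So the vector
\begin{equation*}
	w_k:=\nabla f(x_k+d_k)+r_k-g_k-H_kd_k
\end{equation*}
lies in $\partial F(x_k+d_k)$, and hence $\mathrm{dist}(0,\partial F(x_k+d_k))\le\|w_k\|$.

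Next I split $w_k$ into four pieces,
\begin{equation*}
	w_k=\bigl(\nabla f(x_k+d_k)-\nabla f(x_k)\bigr)+\bigl(\nabla f(x_k)-g_k\bigr)-H_kd_k+r_k,
\end{equation*}
and bound each piece separately.
\begin{itemize}
\item The first piece is at most $L_f\|d_k\|$, by Assumption~\ref{as:problem1}.
\item The second piece is at most $\kappa_{\mathrm{eg}}\Delta_k$, by Assumption~\ref{as:ZOPN-g}.
\item The third piece is at most $\bar\kappa\|d_k\|$, by Assumption~\ref{as:ZOPN-H}.
\end{itemize}

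The fourth piece, $r_k$, is the only step that needs care, because the inexactness criterion \eqref{equ:inexactcriterion} controls $r_k$ only in the $H_k^{-1}$-norm. I convert it to the Euclidean norm as follows. Since $\|r_k\|\le\|H_k^{1/2}\|\,\|H_k^{-1/2}r_k\|$, we get $\|r_k\|\le\sqrt{\bar\kappa}\,\|r_k\|_{H_k^{-1}}$. Then \eqref{equ:inexactcriterion} gives $\|r_k\|\le(1-\gamma)\sqrt{\bar\kappa}\,\|d_k\|_{H_k}$. Finally $\|d_k\|_{H_k}\le\sqrt{\bar\kappa}\,\|d_k\|$, so
\begin{equation*}
	\|r_k\|\le(1-\gamma)\bar\kappa\,\|d_k\|.
\end{equation*}

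Adding the four bounds by the triangle inequality gives
\begin{equation*}
	\|w_k\|\le \bigl(L_f+\bar\kappa+(1-\gamma)\bar\kappa\bigr)\|d_k\|+\kappa_{\mathrm{eg}}\Delta_k=\bigl(L_f+(2-\gamma)\bar\kappa\bigr)\|d_k\|+\kappa_{\mathrm{eg}}\Delta_k,
\end{equation*}
which is exactly the claimed estimate.

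The only points to justify carefully are the sum rule for the subdifferential of a smooth function plus a convex one, which is standard, and the norm conversion for $r_k$ above.
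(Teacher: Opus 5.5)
Your proposal is correct and follows the paper's proof: the same subgradient element $\nabla f(x_k+d_k)+r_k-g_k-H_kd_k$, the same four-term triangle-inequality split, and the same conversion $\|r_k\|\le\sqrt{\bar\kappa}\,\|r_k\|_{H_k^{-1}}\le(1-\gamma)\sqrt{\bar\kappa}\,\|d_k\|_{H_k}\le(1-\gamma)\bar\kappa\|d_k\|$. The constants combine exactly as in the paper to give $(L_f+(2-\gamma)\bar\kappa)\|d_k\|+\kappa_{\mathrm{eg}}\Delta_k$.
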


\begin{proof}
	It follows from~\eqref{equ:inexactcriterion} and Assumption~\ref{as:ZOPN-H} that
	\begin{equation*}
		\| r_k \| \le \sqrt{\bar{\kappa}} \|r_k  \|_{H_k^{-1}}\le (1-\gamma )\sqrt{\bar{\kappa}} \| d_k \|_{H_k} \le (1-\gamma )\bar{\kappa}  \| d_k \|.
	\end{equation*}
	Combining \eqref{equ:residual}, Assumptions~\ref{as:problem1}, \ref{as:ZOPN-g} and \ref{as:ZOPN-H}, we obtain
	\begin{equation*}
		\begin{aligned}
			\mathrm{dist}(0,\partial F(x_k+d_k))
			&= \mathrm{dist}(0,\nabla f(x_k+d_k)+\partial h(x_k+d_k))  \\
			& \le  \| \nabla f(x_k+d_k)+r_k-g_k -H_kd_k \|   \\
			&\le  \| \nabla f(x_k+d_k)-\nabla f(x_k) \|  + \| \nabla f(x_k)-g_k \| + \| H_kd_k \| + \| r_k \|   \\
			&\le  (L_f + (2-\gamma)\bar{\kappa}) \|d_k  \| + \kappa_{\mathrm{eg}}\Delta_k.
		\end{aligned}
	\end{equation*}
	The proof is completed.
\end{proof}

Based on the above analysis, we present a unified zeroth-order proximal Newton-type framework for solving~\eqref{problem}. The subproblem~\eqref{equ:subproblem} can be solved inexactly by a variety of first-order methods. We adopt the fast iterative shrinkage-thresholding algorithm (FISTA)  \cite{FISTA2009,Nesterov2013} to solve it, which efficiently yields an inexact solution $d_k$
satisfying~\eqref{equ:inexactcriterion} within a finite number of iterations; see Algorithm~\ref{alg:FISTA} and Theorem~\ref{thm:FISTA} for details.

\begin{algorithm}[H]
	\renewcommand{\algorithmicrequire}{\textbf{Input:}}
	\renewcommand{\algorithmicensure}{\textbf{Output:}}
	\caption{A unified zeroth-order proximal Newton-type framework (ZOPN) for solving~\eqref{problem}}\label{ZOPN}
	\vskip6pt
		\begin{algorithmic}[1]
		\REQUIRE  $x_0 \in \mathrm{dom}F$, $\epsilon\geq 0$, $c_1\in (0 ,1/2)$, $c_2>0$, $\beta\in (0,1)$, $\gamma \in ( 0,1] $, $\bar{t}>0$.  	
		\FOR{$k=0,1,\dots$}
		\STATE  Set the sampling radius $\Delta_k>0$.
		\STATE Compute $g_k$ and $H_k$ that satisfy Assumptions~\ref{as:ZOPN-g}--\ref{as:ZOPN-DM} via zeroth-order oracle. 	
		\STATE Solve \eqref{equ:subproblem} inexactly to obtain $d_k$ that satisfies \eqref{equ:inexactcriterion}.
		\STATE If $\|d_k\|\leq \epsilon$, stop.	
		\STATE Perform the backtracking line search to obtain $t_k$ such that \eqref{equ:lambda} is satisfied.
		\STATE Update $x_{k+1} = x_k + t_k d_k$.
		\ENDFOR
		\ENSURE $x_k+d_k$.
	\end{algorithmic}
\end{algorithm}

\begin{algorithm}[H]
	\renewcommand{\algorithmicrequire}{\textbf{Input:}}
	\renewcommand{\algorithmicensure}{\textbf{Output:}}
	\caption{FISTA for solving subproblem~\eqref{equ:subproblem} at the $k$-th iteration}
	\label{alg:FISTA}
	\vskip6pt
	\begin{algorithmic}[1]
		\REQUIRE $x_k$, $g_k$, $H_k$, $\epsilon$, $\gamma$.
		\STATE Initialize $y_{k,0}=z_{k,0}=x_k$, $\vartheta_0=1$, $\alpha=1/\|H_k\|$.
		\FOR{$l = 1,2,\dots $}
		\STATE Set $\vartheta_l = \frac{1+\sqrt{1+4\vartheta_{l-1}^2}}{2}$.
		\STATE Compute
		\begin{align}
			y_{k,l}=\mathop{\mathrm{argmin}}\limits_{y\in\mathbb{R}^n} \Big\lbrace   \langle g_k + H_k(z_{k,l-1} - x_k), y - z_{k,l-1}  \rangle + \frac{1}{2\alpha}\|y-z_{k,l-1}\|^2 + h(y) \Big\rbrace \label{equ:ykl}
		\end{align}
		and
		\begin{align}\label{equ:zkl}
			z_{k,l} = y_{k,l} + \frac{\vartheta_{l-1}-1}{\vartheta_l}(y_{k,l}-y_{k,l-1}).
		\end{align}
		\STATE If $d_{k,l}=y_{k,l}-x_k$ satisfies \eqref{equ:inexactcriterion} or $\|d_{k,l}\| \le \epsilon$, stop.
		\ENDFOR
		\ENSURE $d_k = d_{k,l}$.
	\end{algorithmic}
\end{algorithm}

\section{Global Convergence Properties of Algorithm~\ref{ZOPN}}\label{sec:Conv}

In this section, we establish the iteration and oracle complexity bounds for Algorithm~\ref{ZOPN} under both nonconvex and strongly convex objectives.

\subsection{Finite termination of Algorithm~\ref{alg:FISTA}}\label{subsec:FISTA}

We first show that Algorithm~\ref{alg:FISTA} is well-defined, by proving that it produces an inexact solution $d_k$ satisfying~\eqref{equ:inexactcriterion} within finitely many inner iterations.

\begin{lemma}\label{thm:FISTA}
	Suppose that Assumptions~\ref{as:problem1} and \ref{as:ZOPN-H} hold.
	Then Algorithm~\ref{alg:FISTA} requires at most
	\begin{equation}\label{equ:inneriter}
		\left\lceil \frac{16\bar{\kappa}^{\frac32}\|x_k-y_k^{\ast}\|}{(1-\gamma)\underline{\kappa}^{\frac32}\epsilon} \right\rceil + 1
	\end{equation}
	iterations to find a solution $d_k$ satisfying~\eqref{equ:inexactcriterion}, where $y_k^{\ast}$ denotes the unique minimizer of~\eqref{equ:FISTAobj}.
\end{lemma}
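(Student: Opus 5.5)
We work with the objective \eqref{equ:FISTAobj} written in the variable $y=x_k+d$:
\begin{equation*}
\psi_k(y):=\langle g_k, y-x_k\rangle+\tfrac12 (y-x_k)^{\top}H_k(y-x_k)+h(y).
\end{equation*}
Its smooth part has an $\|H_k\|$-Lipschitz gradient and is $\underline{\kappa}$-strongly convex, and $y_k^\ast$ is its unique minimizer. The plan has three steps. First, write the residual at $y_{k,l}$ explicitly from the optimality condition of \eqref{equ:ykl}. Second, bound that residual by FISTA's rate for the gradient-mapping norm. Third, show that once the residual is small enough relative to $\epsilon$, the inner loop must stop: either \eqref{equ:inexactcriterion} holds or $\|d_{k,l}\|\le\epsilon$.

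\emph{Residual.} The optimality condition of \eqref{equ:ykl} gives
\begin{equation*}
0\in g_k+H_k(z_{k,l-1}-x_k)+\alpha^{-1}(y_{k,l}-z_{k,l-1})+\partial h(y_{k,l}).
\end{equation*}
Hence, with $d_{k,l}=y_{k,l}-x_k$, the vector
\begin{equation*}
r_{k,l}:=(H_k-\alpha^{-1}I)(y_{k,l}-z_{k,l-1})
\end{equation*}
lies in $g_k+H_kd_{k,l}+\partial h(y_{k,l})$, which is exactly \eqref{equ:residual}. Because $\alpha=1/\|H_k\|$, we have $\|H_k-\alpha^{-1}I\|\le \bar\kappa$. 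Therefore
\begin{equation*}
\|r_{k,l}\|_{H_k^{-1}}\le \underline{\kappa}^{-1/2}\bar\kappa\,\|y_{k,l}-z_{k,l-1}\|.
\end{equation*}

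\emph{Gradient-mapping rate.} We need $\|y_{k,l}-z_{k,l-1}\|=O(\|x_k-y_k^\ast\|/l)$, since this is the quantity that yields a bound linear in $1/l$. For this we use standard FISTA estimates for the prox-gradient step with step size $1/\|H_k\|$, as in \cite{FISTA2009,Nesterov2013}. The function-value rate is $\psi_k(y_{k,l})-\psi_k(y_k^\ast)\le 2\|H_k\|\|x_k-y_k^\ast\|^2/(l+1)^2$. The sequence $y_{k,l}$ stays within a constant multiple of $\|x_k-y_k^\ast\|$ of $y_k^\ast$; this needs strong convexity, or the standard bound on $\vartheta_l$-weighted iterates. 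The descent lemma for prox steps then gives $\tfrac{1}{2\alpha}\|y_{k,l}-z_{k,l-1}\|^2\le \psi_k(z_{k,l-1})-\psi_k(y_{k,l})$. Combining these with the extrapolation coefficients $(\vartheta_{l-1}-1)/\vartheta_l\le 1$ yields $\|y_{k,l}-z_{k,l-1}\|\le c\sqrt{\bar\kappa/\underline\kappa}\,\|x_k-y_k^\ast\|/l$ with an absolute constant $c$ (we aim for $c\le 16$). This step is the main obstacle. FISTA controls function values, not the step norms $\|y_{k,l}-z_{k,l-1}\|$ at the extrapolated points, so converting one into the other with a $1/l$ rate is where the constant $16$ and the extra factor $\sqrt{\bar\kappa/\underline\kappa}$ (giving the total $\bar\kappa^{3/2}/\underline\kappa^{3/2}$) must come from. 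I would first try to bound $\psi_k(z_{k,l-1})-\psi_k(y_k^\ast)$ using strong convexity together with the momentum bound, and fall back on a restart-free argument from \cite{Nesterov2013} if needed.

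\emph{Termination.} Suppose the loop has not stopped at iteration $l$, so $\|d_{k,l}\|>\epsilon$. Then
\begin{equation*}
(1-\gamma)\|d_{k,l}\|_{H_k}\ge (1-\gamma)\sqrt{\underline\kappa}\,\epsilon .
\end{equation*}
Criterion \eqref{equ:inexactcriterion} therefore holds as soon as
\begin{equation*}
\underline\kappa^{-1/2}\bar\kappa\cdot 16\sqrt{\bar\kappa/\underline\kappa}\,\|x_k-y_k^\ast\|/l\le (1-\gamma)\sqrt{\underline\kappa}\,\epsilon ,
\end{equation*}
that is, once $l\ge 16\bar\kappa^{3/2}\|x_k-y_k^\ast\|/\bigl((1-\gamma)\underline\kappa^{3/2}\epsilon\bigr)$. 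The loop thus stops by the count in \eqref{equ:inneriter}, where the $+1$ accounts for the ceiling and the indexing from $l=1$. The degenerate case $\gamma=1$ requires $r_{k,l}=0$. There the bound is read as infinite, and termination comes from the $\|d_{k,l}\|\le\epsilon$ test only when $y_k^\ast$ is close to $x_k$; I would note this case separately.
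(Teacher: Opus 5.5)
\textbf{There is a genuine gap at the central step.} Your residual formula and the termination argument match the paper. Your bound $\|H_k-\alpha^{-1}I\|\le\bar\kappa$ is even slightly sharper than the paper's $\|H_k\|+1/\alpha\le 2\bar\kappa$. However, the key estimate $\|y_{k,l}-z_{k,l-1}\|=\mathcal{O}\bigl(\sqrt{\bar\kappa/\underline{\kappa}}\,\|x_k-y_k^\ast\|/l\bigr)$ is never proved, and the route you sketch for it fails.

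Using $\frac{1}{2\alpha}\|y_{k,l}-z_{k,l-1}\|^2\le\psi_k(z_{k,l-1})-\psi_k(y_k^\ast)$ requires controlling $\psi_k$ at the extrapolated point $z_{k,l-1}$. That point need not lie in $\mathrm{dom}\,h$, e.g.\ when $h$ is an indicator function. Even for finite nonsmooth $h$ such as $\|\cdot\|_1$, the gap $\psi_k(z)-\psi_k(y_k^\ast)$ is in general only of order $\|z-y_k^\ast\|$, not $\|z-y_k^\ast\|^2$; take $\psi(y)=\frac12y^2+|y|$ with $y^\ast=0$. Strong convexity only gives lower bounds on this gap, so it cannot help here. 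Your route therefore yields at best $\|y_{k,l}-z_{k,l-1}\|=\mathcal{O}(1/\sqrt{l})$ and an $\mathcal{O}(1/\epsilon^2)$ inner count, not \eqref{equ:inneriter}. Likewise, knowing only that $y_{k,l}$ stays within a constant multiple of $\|x_k-y_k^\ast\|$ of $y_k^\ast$ is too weak.

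\textbf{The missing idea} is simpler and never evaluates $\psi_k$ at $z_{k,l-1}$. Strong convexity converts the FISTA function-value rate directly into an iterate rate:
\[
\tfrac{\underline\kappa}{2}\|y_{k,l}-y_k^\ast\|^2\le\psi_k(y_{k,l})-\psi_k(y_k^\ast)\le \frac{2\bar\kappa\|x_k-y_k^\ast\|^2}{(l+1)^2}.
\]
Hence $\|y_{k,l}-y_k^\ast\|\le 2\sqrt{\bar\kappa/\underline\kappa}\,\|x_k-y_k^\ast\|/(l+1)$, which also holds trivially for $l=0$ since $y_{k,0}=x_k$.

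Since $0\le(\vartheta_{l-1}-1)/\vartheta_l\le 1$, \eqref{equ:zkl} and the triangle inequality give
\[
\|z_{k,l}-y_k^\ast\|\le\|y_{k,l}-y_k^\ast\|+\|y_{k,l}-y_{k,l-1}\|\le \frac{6\sqrt{\bar\kappa/\underline\kappa}\,\|x_k-y_k^\ast\|}{l}.
\]
Then simply
\[
\|y_{k,l}-z_{k,l-1}\|\le\|y_{k,l}-y_k^\ast\|+\|z_{k,l-1}-y_k^\ast\|\le \frac{8\sqrt{\bar\kappa/\underline\kappa}\,\|x_k-y_k^\ast\|}{l-1}.
\]
With the paper's factor $2\bar\kappa$ this becomes $\|r_{k,l}\|\le 16\bar\kappa^{3/2}\|x_k-y_k^\ast\|/(\sqrt{\underline\kappa}(l-1))$. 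This is the source of both the constant $16$ and the $+1$ in \eqref{equ:inneriter}; the $+1$ comes from the denominator $l-1$, not from indexing. With your sharper residual factor you would obtain $8$ in place of $16$.
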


\begin{proof}
	Denote
	\begin{equation}\label{equ:FISTAobj}
		\psi(y)=g_k^{\top}(y-x_k)+\frac12 \|y-x_k\|_{H_k}^2+h(y).
	\end{equation}
	This is the objective function minimized by Algorithm~\ref{alg:FISTA}.
	Since $H_k\succeq \underline{\kappa}I$, $\psi(y)$ is a strongly convex function. It follows from~\cite[Theorem~4.4]{FISTA2009} that the sequence $\{y_{k,l}\}$ generated by Algorithm~\ref{alg:FISTA} satisfies
	\begin{equation}\label{equ:FISTAlinearconv}
		\psi(y_{k,l}) - \psi(y_k^{\ast}) \le
		\frac{2\bar{\kappa}\|x_k - y_k^{\ast}\|^2}{(l+1)^2}, \quad \forall l\ge1,
	\end{equation}
	where $y_k^{\ast}$ denotes the unique minimizer of $\psi(y)$.
	
	By the optimal condition of~\eqref{equ:ykl}, we have
	\begin{equation*}
		0\in g_k +  H_k(z_{k,l-1} - x_k) + \frac{1}{\alpha}(y_{k,l}-z_{k,l-1})+\partial h(y_{k,l}),
	\end{equation*}
	which implies that there exists some $s_{k,l}\in \partial h(y_{k,l})$ satisfying
	\begin{equation*}
		s_{k,l} = \frac{1}{\alpha}(z_{k,l-1}-y_{k,l})- g_k -  H_k(z_{k,l-1} - x_k).
	\end{equation*}
	Therefore, there exists some  $r_{k,l}\in \partial \psi(y_{k,l})$, such that
	\begin{equation}\label{equ:rkl}
		r_{k,l} = g_k +   H_k(y_{k,l} - x_k) + s_{k,l} =  H_k(y_{k,l} - z_{k,l-1}) + \frac{1}{\alpha}(z_{k,l-1}-y_{k,l}).
	\end{equation}
	Since $\nabla^2 \psi(y)\succeq H_k \succeq  \underline{\kappa} I$, we have
	\begin{equation*}
		\psi(y_{k,l}) - \psi(y_k^{\ast}) \ge \frac{\underline{\kappa}}2  \|y_{k,l}-y_k^{\ast}\|^2.
	\end{equation*}
	It then follows from~\eqref{equ:FISTAlinearconv} that
	\begin{equation}\label{equ:yklbound}
		\|y_{k,l}-y_k^{\ast}\| \le \sqrt{\frac{2}{\underline{\kappa}}(\psi(y_{k,l}) - \psi(y_k^{\ast}))} \le \frac{2\sqrt{\bar{\kappa}}\|x_k - y_k^{\ast}\|}{\sqrt{\underline{\kappa}}(l+1)}.
	\end{equation}
	
	By the update rule of $z_{k,l}$~\eqref{equ:zkl}, it holds that
	\begin{equation} \label{equ:zklrelation}
		\|z_{k,l}-y_k^{\ast}\| \le \|z_{k,l}-y_{k,l}\| + \|y_{k,l}-y_k^{\ast}\|  \le \left|\frac{\vartheta_{l-1}-1}{\vartheta_l} \right| \|y_{k,l}-y_{k,l-1}\| + \|y_{k,l}-y_k^{\ast}\|.
	\end{equation}
	Since $\vartheta_0=1$ and $\vartheta_l = \frac{1+\sqrt{1+4\vartheta_{l-1}^2}}{2} \ge \vartheta_{l-1}$,
	we get $\left|\frac{\vartheta_{l-1}-1}{\vartheta_l} \right| \le 1$.
	It follows from~\eqref{equ:yklbound} that
	\begin{equation}
		\|y_{k,l}-y_{k,l-1}\|  \le \|y_{k,l}-y_k^{\ast}\| + \|y_{k,l-1}-y_k^{\ast}\| \le \frac{4\sqrt{\bar{\kappa}}\|x_k - y_k^{\ast}\|}{\sqrt{\underline{\kappa}}l}. \label{equ:yklrecursion}
	\end{equation}
	Substituting~\eqref{equ:yklbound} and \eqref{equ:yklrecursion} into \eqref{equ:zklrelation}, we obtain
	\begin{equation*}
		\|z_{k,l}-y_k^{\ast}\| \le \frac{6\sqrt{\bar{\kappa}}\|x_k - y_k^{\ast}\|}{\sqrt{\underline{\kappa}}l}.
	\end{equation*}
	This, together with \eqref{equ:rkl} and \eqref{equ:yklbound}, yields
	\begin{equation}
		\|r_{k,l}\|  \le \left(\|H_k\| + \frac{1}{\alpha}\right)  \|y_{k,l} - z_{k,l-1}\| \le 2 \bar{\kappa}(\|y_{k,l} - y_k^{\ast}\|+\|z_{k,l-1}-y_k^{\ast}\|) \le  \frac{16\bar{\kappa}^{\frac32}\|x_k - y_k^{\ast}\|}{\sqrt{\underline{\kappa}}(l-1)}. \label{eq:residualconvrate}
	\end{equation}
	
	Therefore, if there exists some $l$ such that \eqref{equ:inexactcriterion} is satisfied or $\|d_{k,l} \| \le \epsilon$, then Algorithm~\ref{alg:FISTA} stops and returns to Algorithm~\ref{ZOPN} with $d_k=d_{k,l}$. Otherwise, it holds that
	\begin{equation*}
		(1-\gamma) \| d_{k,l} \| _{H_k}\ge (1-\gamma)\sqrt{\underline{\kappa}}  \| d_{k,l} \| > (1-\gamma)\sqrt{\underline{\kappa}}\epsilon.
	\end{equation*}
	Equation~\eqref{eq:residualconvrate} implies that the left-hand side of \eqref{equ:inexactcriterion} satisfies
	\begin{equation*}
		\|r_{k,l}\|_{H_k^{-1}}\le \frac{1}{\sqrt{\underline{\kappa}}}\|r_{k,l}\| \le   \frac{16\bar{\kappa}^{\frac32}\|x_k - y_k^{\ast}\|}{\underline{\kappa}(l-1)}.
	\end{equation*}
	Hence, the inexact criterion \eqref{equ:inexactcriterion} is triggered when
	\begin{equation*}
		\frac{16\bar{\kappa}^{\frac32}\|x_k - y_k^{\ast}\|}{\underline{\kappa}(l-1)} \le (1-\gamma)\sqrt{\underline{\kappa}}\epsilon,
	\end{equation*}
	which yields the worst-case inner iteration complexity bound \eqref{equ:inneriter}.
\end{proof}

The proof of Lemma \ref{thm:FISTA} further implies that we can obtain $r_k$ via \eqref{equ:rkl}, rather than computing the complicated subdifferential through \eqref{equ:residual}.

\subsection{Iteration and oracle complexities of Algorithm~\ref{ZOPN} for nonconvex objectives}
\label{subsec:globalconv}

We first derive both the iteration and oracle complexities of Algorithm~\ref{ZOPN}
without imposing any convexity assumptions on $f$.

\begin{lemma}\label{thm:globalconv}
	Suppose that Assumptions \ref{as:problem1}, \ref{as:ZOPN-g}, and \ref{as:ZOPN-H} hold.
	Then, for any $T\in\mathbb{N}_+$, the sequence $\{d_k\}$ generated by  Algorithm \ref{ZOPN} satisfies
	\begin{equation}\label{dk}
		\sum_{k=0}^{T-1}\|d_k\|^2 \le \frac{F(x_0)-F_{\mathrm{low}}}{ c_1\underline{t}\underline{\kappa}\gamma} + \frac{nc_2}{c_1\underline{t}\underline{\kappa}\gamma}\sum_{k=0}^{T-1}\Delta_k^2.
	\end{equation}
\end{lemma}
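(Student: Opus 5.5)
The plan is a telescoping argument built on the sufficient decrease condition~\eqref{equ:lambda}. It has three ingredients: the bound $\Phi_k\le -\gamma d_k^{\top}H_kd_k$ from~\eqref{equ:lemdd2}, the lower bound $t_k\ge\underline{t}$ from Lemma~\ref{lem:stepsizeuppbd}, and $F\ge F_{\mathrm{low}}$ from Assumption~\ref{as:problem1}.

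First, for each iteration $k$ with $d_k$ satisfying~\eqref{equ:inexactcriterion}, I will invoke~\eqref{equ:lemdd2} together with Assumption~\ref{as:ZOPN-H}. This gives
\begin{equation*}
\Phi_k\le -\gamma\|d_k\|_{H_k}^2\le -\gamma\underline{\kappa}\|d_k\|^2 .
\end{equation*}
In particular $\Phi_k\le 0$. Next, the backtracking line search returns $t_k$ satisfying~\eqref{equ:lambda}, and by~\eqref{equ:tlowerbd} we have $t_k\ge\underline{t}$. Since $\Phi_k\le0$, multiplying by the smaller factor $\underline t$ only weakens the bound, so
\begin{equation*}
F(x_{k+1})-F(x_k)\le c_1t_k\Phi_k+nc_2\Delta_k^2\le -c_1\underline{t}\,\underline{\kappa}\gamma\|d_k\|^2+nc_2\Delta_k^2 .
\end{equation*}

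Then I will sum this inequality over $k=0,\dots,T-1$. The left side telescopes to $F(x_T)-F(x_0)$, which is at least $F_{\mathrm{low}}-F(x_0)$. Rearranging and dividing by $c_1\underline{t}\,\underline{\kappa}\gamma>0$ yields exactly~\eqref{dk}.

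The only delicate point is making sure $t_k\ge \underline t$ is legitimately available, and that the sign of $\Phi_k$ is used correctly when replacing $t_k$ by $\underline{t}$. Both are settled by Lemma~\ref{lem:stepsizeuppbd} and~\eqref{equ:lemdd2}.

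A minor bookkeeping issue arises if the algorithm stops before $T$ (when $\|d_k\|\le\epsilon$). In that case I interpret the sum over the iterations actually performed, or note that the bound is only claimed for generated iterates. Otherwise the argument is routine.
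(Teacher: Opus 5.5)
Your proposal is correct and matches the paper's proof: it combines \eqref{equ:lambda}, the bound $t_k\ge\underline{t}$ from \eqref{equ:tlowerbd}, and $\Phi_k\le-\gamma d_k^{\top}H_kd_k\le-\gamma\underline{\kappa}\|d_k\|^2$ from \eqref{equ:lemdd2}, then telescopes and uses $F(x_T)\ge F_{\mathrm{low}}$. Your explicit remark that $\Phi_k\le 0$ is what allows replacing $t_k$ by $\underline{t}$ spells out a step the paper leaves implicit.
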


\begin{proof}
	By \eqref{equ:lambda}, \eqref{equ:tlowerbd}, and \eqref{equ:lemdd2},
	\begin{equation*}
		F(x_{k+1})-F(x_k)\le c_1 t_k \Phi_k + nc_2 \Delta_k^2 \le -c_1 \underline{t}\underline{\kappa} \gamma \|d_k\|^2 + nc_2 \Delta_k^2.
	\end{equation*}
	Since $F$ is bounded from below, we have
	\begin{equation*}
		\sum_{k=0}^{T-1}c_1\underline{t} \underline{\kappa} \gamma \|d_k \|^2 \leq F(x_0)-F_{\mathrm{low}} + nc_2  \sum_{k=0}^{T-1}\Delta_k^2.
	\end{equation*}
	This implies that \eqref{dk} holds true.
\end{proof}

Lemma~\ref{thm:globalconv} states that if the sampling radius $\{\Delta_k\}$ is square-summable, then $\{\|d_k\|\}$ is also square-summable, which further yields $\|d_k\|\to 0$ as $k\to \infty$. Combining this with Lemma~\ref{lem:station} and the closedness of $\partial F$, we conclude that any accumulation point of $\{x_k\}$ is a stationary point of \eqref{problem}
if the level set $\{x\in \mathbb R^n\mid F(x)\leq F(x_0)+nc_2\sum_{k=0}^{\infty}\Delta_k^2 \}$ is bounded.
Furthermore, if $F$ is strongly convex, then $\{x_k\}$ converges to the unique minimizer of~\eqref{problem}.

We further derive the worst-case iteration complexity and oracle complexity of Algorithm~\ref{ZOPN}  from Lemma~\ref{thm:globalconv}.
Before proceeding, we introduce the notion of \textit{average number of function evaluations per iteration} to enable a fair comparison when the function evaluation cost is distributed unevenly across iterations. Formally, let $C_T$ denote the total number of function evaluations accumulated over the first $T$ iterations. Assuming the limit $\lim_{T\to\infty} C_T/T$ exists, the average number of function evaluations  per iteration is defined as this limit. For instance, consider the lazy Hessian update strategy in \eqref{eq:lazyH}. The scheme updates the approximate Hessian via finite differences every $n$ iterations, yielding an average number of function evaluations per iteration equal to $\frac{(n+1)(n+2)}{2n}$.

\begin{theorem}\label{cor:globalcomplexity}
	Suppose that Assumptions \ref{as:problem1}, \ref{as:ZOPN-g}, and \ref{as:ZOPN-H} hold.
	Then, for any $\epsilon>0$,
	
	(i) if we set
	\begin{equation}\label{equ:ncmuconst}
		\Delta_k \equiv \Delta \le \frac{\sqrt{c_1\underline{t} \underline{\kappa}\gamma}\epsilon}{2[(L_f+(2-\gamma)\bar{\kappa})\sqrt{nc_2}+\sqrt{c_1\underline{t} \underline{\kappa}\gamma}\kappa_{\mathrm{eg}}]},
	\end{equation}
	then Algorithm \ref{ZOPN} finds an $\epsilon$-stationary point of \eqref{problem} within at most
	\begin{equation*}
		\mathcal{K}_1(\epsilon) := \left \lceil \frac{ 4(F(x_0)-F_{\mathrm{low}}  )  (L_f+ (2-\gamma )\bar{\kappa} )^2 }{c_1\underline{t} \underline{\kappa}\gamma\epsilon^2} \right \rceil
	\end{equation*}
	iterations;
	
	(ii) if $\Delta_k$ satisfies
	\begin{equation}\label{equ:ncmudecay}
		\sum_{k=0}^{\infty}\Delta_k^2 = \frac{S_1}n < \infty \ \  \text{ and } \ \ \Delta_k \le \frac{\epsilon}{2\kappa_{\mathrm{eg}}},\quad\forall k\in\mathbb{N},
	\end{equation}
	then Algorithm \ref{ZOPN} finds an $\epsilon$-stationary point of \eqref{problem} within at most
	\begin{equation*}
		\mathcal{K}_2(\epsilon) := \left \lceil  \frac{4(F(x_0)-F_{\mathrm{low}}+ c_2 S_1)(L_f+(2-\gamma)\bar{\kappa})^2}{c_1\underline{t}\underline{\kappa}\gamma \epsilon^2} \right \rceil
	\end{equation*}
	iterations;
	
	(iii) if the number of function evaluations required for gradient estimation satisfies $\varsigma_g=\mathcal{O}(n)$, and the average number of function evaluations per iteration for Hessian approximation satisfies $\varsigma_H = \mathcal{O}(n)$, then the worst-case oracle complexity of Algorithm~\ref{ZOPN} for finding an $\epsilon$-stationary point of \eqref{problem} is  $\mathcal{O}(n/\epsilon^2)$.
\end{theorem}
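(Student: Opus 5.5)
The plan is to combine Lemma~\ref{thm:globalconv}, which gives a summable bound on $\|d_k\|^2$, with Lemma~\ref{lem:station}, which turns a small $\|d_k\|$ and a small $\Delta_k$ into a bound on $\mathrm{dist}(0,\partial F(x_k+d_k))$. The output point is $x_k+d_k$, so we need only find one index $k<T$ for which $(L_f+(2-\gamma)\bar\kappa)\|d_k\|+\kappa_{\mathrm{eg}}\Delta_k\le\epsilon$. Write $L:=L_f+(2-\gamma)\bar\kappa$ and $a:=c_1\underline{t}\underline{\kappa}\gamma$. Taking the minimum over $k$ in \eqref{dk} gives
\begin{equation*}
\min_{0\le k\le T-1}\|d_k\|^2\le \frac{F(x_0)-F_{\mathrm{low}}}{aT}+\frac{nc_2}{aT}\sum_{k=0}^{T-1}\Delta_k^2 .
\end{equation*}

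For part (i), with $\Delta_k\equiv\Delta$ the second term equals $nc_2\Delta^2/a$. We choose $T=\mathcal{K}_1(\epsilon)$, so the first term is at most $\epsilon^2/(4L^2)$. Using $\sqrt{u+v}\le\sqrt u+\sqrt v$, the minimizing index satisfies
\begin{equation*}
L\|d_k\|+\kappa_{\mathrm{eg}}\Delta\le \frac{\epsilon}{2}+\Delta\Big(L\sqrt{nc_2/a}+\kappa_{\mathrm{eg}}\Big).
\end{equation*}
Condition \eqref{equ:ncmuconst} is exactly $\Delta\big(L\sqrt{nc_2}+\sqrt a\,\kappa_{\mathrm{eg}}\big)\le \sqrt a\,\epsilon/2$, so the right-hand side is at most $\epsilon$. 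Lemma~\ref{lem:station} then finishes the argument. One caveat concerns the stopping test $\|d_k\|\le\epsilon$. If the algorithm stops earlier, the same bound applies at that index. I will treat ``$\epsilon$-stationary'' as the dist bound at the output $x_k+d_k$, and adjust the constant if the stopping rule interacts with it.

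For part (ii), the sum is bounded by $S_1/n$, so the second term becomes $c_2S_1/(aT)$. With $T=\mathcal{K}_2(\epsilon)$ the whole right-hand side is at most $\epsilon^2/(4L^2)$, giving $L\|d_k\|\le\epsilon/2$. The second condition in \eqref{equ:ncmudecay} gives $\kappa_{\mathrm{eg}}\Delta_k\le\epsilon/2$, and Lemma~\ref{lem:station} yields the claim.

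For part (iii), I multiply the iteration count by the per-iteration cost. The main obstacle here is the line search: each iteration may require several function evaluations of $F$. The number of backtracks is bounded independently of $n$ and $\epsilon$, because $t_k\ge\underline t$ and $t_k\le\bar t$ by Lemma~\ref{lem:stepsizeuppbd}. This gives at most $\lceil\log_\beta(\underline t/\bar t)\rceil+1$ evaluations per iteration. Since $\kappa_{\mathrm{eg}}=\Theta(\sqrt n)$ for the standard estimators, $\underline t$ is dimension-free. In (ii), $S_1$ can also be chosen independent of $n$, e.g.\ $\Delta_k$ proportional to $\min\{\epsilon/(2\kappa_{\mathrm{eg}}),\,1/(\sqrt n(k+1))\}$. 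Hence $\mathcal K_2(\epsilon)=\mathcal O(1/\epsilon^2)$ with constants free of $n$. The per-iteration cost is $\varsigma_g+\varsigma_H+\mathcal O(1)=\mathcal O(n)$, and averaging $\varsigma_H$ over $\mathcal O(1/\epsilon^2)$ iterations is legitimate up to lower-order terms. This yields the total oracle complexity $\mathcal O(n/\epsilon^2)$. The point that needs the most care is checking that no hidden $n$-dependence sits in $\underline t$ or $S_1$.
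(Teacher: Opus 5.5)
Your proposal is correct and follows the paper's proof essentially step for step. You take the minimum in \eqref{dk} at $T=\mathcal{K}_1(\epsilon)$ or $T=\mathcal{K}_2(\epsilon)$, split the square root, and combine the resulting bound on $\|d_{\bar k}\|$ with the condition on $\Delta_k$ through Lemma~\ref{lem:station}. For (iii) you multiply the iteration count by the per-iteration cost $\varsigma_g+\varsigma_H$ plus the backtracking count bounded via $\underline{t}$. Your extra remarks on the $n$-independence of $\underline{t}$ and $S_1$ and on the early-stopping test spell out points the paper leaves implicit.
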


\begin{proof}
	By \eqref{dk},
	\begin{equation*}
		\min_{0\le k\le \mathcal{K}(\epsilon)-1} \|d_k \|^2  \le \frac1{\mathcal{K}(\epsilon)} \sum_{k=0}^{\mathcal{K}(\epsilon)-1} \|d_k  \|^2   \le \frac1{\mathcal{K}(\epsilon)} \left( \frac{F(x_0)-F_{\mathrm{low}}}{ c_1\underline{t}\underline{\kappa}\gamma} + \frac{nc_2}{c_1\underline{t}\underline{\kappa}\gamma}\sum_{k=0}^{\mathcal{K}(\epsilon)-1}\Delta_k^2 \right) .
	\end{equation*}
	
	(i) If we set $\Delta_k$ as \eqref{equ:ncmuconst}, then there exists $0\le \bar k\le \mathcal{K}_1(\epsilon)-1$ such that
	\begin{equation*}
		\|d_{\bar k} \| \leq \sqrt{\frac1{\mathcal{K}_1(\epsilon)}  \frac{F(x_0)-F_{\mathrm{low}}}{ c_1\underline{t}\underline{\kappa}\gamma}} + \sqrt{\frac{nc_2}{c_1\underline{t}\underline{\kappa}\gamma}}\Delta \le  \frac{\epsilon}{2(L_f+(2-\gamma)\bar{\kappa} )} +  \sqrt{\frac{nc_2}{c_1\underline{t}\underline{\kappa}\gamma}}\Delta.
	\end{equation*}
	From Lemma \ref{lem:station}, we derive
	\begin{equation*}
		\begin{aligned}
			\mathrm{dist}\left(0,\partial F(x_{\bar k}+d_{\bar k}) \right) &\le (L_f+(2-\gamma)\bar{\kappa}) \|d_{\bar k} \| + \kappa_{\mathrm{eg}} \Delta \\
			& \le \frac{\epsilon}2 + \left( (L_f+(2-\gamma)\bar{\kappa})\sqrt{\frac{nc_2}{c_1\underline{t}\underline{\kappa}\gamma}} + \kappa_{\mathrm{eg}} \right) \Delta \le \epsilon,
		\end{aligned}
	\end{equation*}
	which implies that Algorithm \ref{ZOPN} finds an $\epsilon$-stationary point of \eqref{problem} within at most $\mathcal{K}_1(\epsilon)$ iterations.
	
	(ii) If $\sum_{k=0}^{\infty} \Delta_k^2=S_1/n< \infty$, then there exists $0\le \bar k\le \mathcal{K}_2(\epsilon)-1$ such that
	\begin{equation*}
		\|d_{\bar k} \| \leq \sqrt{\frac1{\mathcal{K}_2(\epsilon)}   \frac{F(x_0)-F_{\mathrm{low}}+c_2S_1}{ c_1\underline{t}\underline{\kappa}\gamma} } \le \frac{\epsilon}{2(L_f+(2-\gamma)\bar{\kappa})}.
	\end{equation*}
	This, together with Lemma \ref{lem:station} and $\Delta_k\le \frac{\epsilon}{2\kappa_{\mathrm{eg}}}$, implies that
	\begin{equation*}
		\mathrm{dist}\left(0,\partial F(x_{\bar k}+d_{\bar k}) \right) \le (L_f+(2-\gamma)\bar{\kappa}) \|d_{\bar k} \| + \kappa_{\mathrm{eg}} \Delta_k  \le
		\epsilon.
	\end{equation*}
	
	(iii) Note that in both cases (i) and (ii),
	the number of function evaluations required for gradient and Hessian approximations via the zeroth-order oracle at each iteration is $\varsigma_g+\varsigma_H$,
	and each backtracking step requires one additional function evaluation.
	Suppose that the line search performs $i$ backtracking steps at a given iteration.
	It follows from \eqref{equ:tlowerbd} that $\beta^i \bar{t} \ge \underline{t}$.
	Thus the upper bound on the number of function evaluations for the line search per iteration is		$ \bar i\le  \lfloor \log_{\beta} ( \underline{t}/{\bar t} )  \rfloor $.
	Therefore the total number of function evaluations required for Algorithm \ref{ZOPN} to produce an $\epsilon$-stationary point is at most $\mathcal{O}(n/\epsilon^2) $.
	The proof is completed.
\end{proof}
	
\subsection{Iteration and oracle complexities of Algorithm~\ref{ZOPN} for strongly convex objectives}\label{subsec:linear}
We first show that Algorithm \ref{ZOPN} converges R-linearly to the minimizer of \eqref{as:problem1} in terms of both objective values and iterates under strong convexity.
Furthermore, we establish its iteration and oracle complexities.

We now state the following assumption,  which is weaker than Assumption~\ref{as:problem2}.

\begin{assumption}\label{ass:3}
	$f$ is convex  and $F$ is  strongly convex.
\end{assumption}

The strong convexity of $F$ implies that
there exists $m>0$ such that for all $x, y\in \mathbb{R}^n $,
\begin{equation*}
	F(x)\ge F(y) + q^{\top}(x-y) + \frac m2  \| x-y  \|^2, \quad \forall q \in \partial F(y).
\end{equation*}
Moreover,  \eqref{problem} has a unique minimizer $x^{\star}$ and
$0\in \partial F(x^{\star})$.  Thus
\begin{equation}\label{equ:SCdis}
	F(x)-F(x^{\star})\ge \frac{m}{2} \|x-x^{\star}  \|^2, \quad \forall x\in \mathbb{R}^n.
\end{equation}

\begin{lemma}\label{lem:squarebd}
	Suppose that Assumptions \ref{as:problem1}, \ref{as:ZOPN-g}, \ref{as:ZOPN-H}, and \ref{ass:3} hold.
	Then Algorithm \ref{ZOPN} satisfies
	\begin{equation}\label{equ:squarebound}
		\|x_k+d_k-x^{\star}  \|^2 \le \frac{2}{m}\left(F(x_k)-F(x^{\star}) + \left(\frac{p+L_f}2 - \underline{\kappa}\gamma \right)\|d_k\|^2 + \frac{\kappa_{\mathrm{eg}}^2}{2p} \Delta_k^2  \right)
	\end{equation}
	for any $p>0$.
\end{lemma}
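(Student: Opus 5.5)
Apply the strong convexity bound \eqref{equ:SCdis} at the point $x=x_k+d_k$, which gives $\|x_k+d_k-x^{\star}\|^2\le \frac{2}{m}(F(x_k+d_k)-F(x^{\star}))$. It then suffices to show
\begin{equation*}
	F(x_k+d_k)\le F(x_k) + \Big(\frac{p+L_f}{2}-\underline{\kappa}\gamma\Big)\|d_k\|^2 + \frac{\kappa_{\mathrm{eg}}^2}{2p}\Delta_k^2 ,
\end{equation*}
i.e.\ a one-step (unit step size) descent-type estimate for the full step $d_k$. This is essentially \eqref{equ:lemdd1} at $t=1$, but with the Young's inequality parameter left free instead of fixed to $\underline{\kappa}\gamma$.

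Redo the chain in \eqref{equ:lemdd1} with $t=1$. By the descent lemma (Assumption~\ref{as:problem1}),
\begin{equation*}
	f(x_k+d_k)\le f(x_k)+\nabla f(x_k)^{\top}d_k+\tfrac{L_f}{2}\|d_k\|^2 .
\end{equation*}
Adding $h(x_k+d_k)-h(x_k)$ to both sides gives
\begin{equation*}
	F(x_k+d_k)-F(x_k)\le \Phi_k + (\nabla f(x_k)-g_k)^{\top}d_k + \tfrac{L_f}{2}\|d_k\|^2 .
\end{equation*}
For the cross term, apply Young's inequality with a general parameter $p>0$:
\begin{equation*}
	(\nabla f(x_k)-g_k)^{\top}d_k\le \frac{1}{2p}\|\nabla f(x_k)-g_k\|^2+\frac{p}{2}\|d_k\|^2 .
\end{equation*}
Then bound the gradient error by Assumption~\ref{as:ZOPN-g}, giving at most $\frac{\kappa_{\mathrm{eg}}^2}{2p}\Delta_k^2$.

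For $\Phi_k$, use \eqref{equ:lemdd2}, which comes from the residual inclusion \eqref{equ:residual}, convexity of $h$, and the inexactness criterion \eqref{equ:inexactcriterion}. It gives $\Phi_k\le-\gamma d_k^{\top}H_kd_k$. Assumption~\ref{as:ZOPN-H} then gives $\Phi_k\le -\underline{\kappa}\gamma\|d_k\|^2$. Collecting the three $\|d_k\|^2$ coefficients produces exactly $\frac{p+L_f}{2}-\underline{\kappa}\gamma$. Combining with the first display yields \eqref{equ:squarebound}.

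There is no real obstacle here; the only points to check are two. First, \eqref{equ:SCdis} applies at $x_k+d_k$, which may lie anywhere. This is fine since it holds for all $x$, and $F(x_k+d_k)$ is finite because $x_k+d_k\in\operatorname{dom}h$ by \eqref{equ:residual}. Second, convexity of $f$ in Assumption~\ref{ass:3} is not actually needed for this particular estimate; only the $L_f$-smoothness upper bound is used.
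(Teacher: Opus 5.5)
Your proposal is correct and matches the paper's proof essentially step for step. Both arguments use the descent lemma for $f$, the bound $\Phi_k\le-\gamma d_k^{\top}H_kd_k$ from \eqref{equ:lemdd2}, Young's inequality with a free parameter $p$, Assumptions~\ref{as:ZOPN-g} and \ref{as:ZOPN-H}, and finally \eqref{equ:SCdis} at $x_k+d_k$; your side remarks, that $F(x_k+d_k)$ is finite and that convexity of $f$ is not needed for this estimate, are also accurate.
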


\begin{proof}
	By \eqref{equ:lemdd2},
	\begin{equation*}\label{h}
		h(x_k+d_k)\le h(x_k)-\gamma d_k^{\top}H_kd_k-g_k^{\top}d_k.
	\end{equation*}
	Since $\nabla f$ is $L_f$-Lipschitz continuous, we have
	\begin{equation*}\label{f}
		f(x_k+d_k)\le f(x_k)+\nabla f(x_k)^{\top}d_k+\frac{L_f}{2} \|d_k  \|^2.
	\end{equation*}
	Summing the above two inequalities yields
	\begin{equation*}
		\begin{aligned}
			F(x_k+d_k)&\le F(x_k)-\gamma d_k^{\top}H_kd_k+ (\nabla f(x_k)-g_k )^{\top}d_k +\frac{L_f}{2} \|d_k  \|^2\\
			&\le F(x_k)-\gamma d_k^{\top}H_kd_k + \frac{1}{2p}  \|\nabla f(x_k)-g_k \|^2 + \frac{p+L_f}2 \|d_k\|^2 \\
			&\le  F(x_k) + \left(\frac{p+L_f}2 - \underline{\kappa}\gamma \right)\|d_k\|^2 + \frac{\kappa_{\mathrm{eg}}^2}{2p} \Delta_k^2,
		\end{aligned}
	\end{equation*}
	where the second inequality follows from Young's inequality for all $p>0$, and the last inequality is due to $H_k\succeq \underline{\kappa}I$ and Assumption~\ref{as:ZOPN-g}.
	It then follows from \eqref{equ:SCdis} that \eqref{equ:squarebound} holds.
\end{proof}

Based on Lemma \ref{lem:squarebd}, we can prove that Algorithm \ref{ZOPN} converges R-linearly to  the minimizer $x^{\star}$ in terms of both objective function values and iterates.

\begin{lemma}\label{thm:linearconv}
	Suppose that Assumptions \ref{as:problem1}, \ref{as:ZOPN-g}, \ref{as:ZOPN-H}, and \ref{ass:3} hold.
	Then, for any $T\in\mathbb{N}_+$, Algorithm~\ref{ZOPN}
	satisfies
	\begin{equation}\label{equ:linearconv_F}
		F(x_T)-F(x^{\star}) \le \tilde{\tau}^{T}(F(x_0)-F(x^{\star}))+ (nc_2+\tilde{\varpi}\kappa_{\mathrm{eg}}^2)\tilde{\tau}^{T-1}\sum_{k=0}^{T-1}\frac{\Delta_k^2}{\tilde{\tau}^k},
	\end{equation}
	where $\tilde{\tau}$, $\tilde{\varpi}$ are constants defined by
	\begin{equation*}
		\tilde{\tau}  := 1 - \frac{c_1\underline{t}\underline{\kappa}\gamma }{4\underline{\kappa}\gamma + \frac{16\bar{\kappa}^2}m + 2L_f} \in (0,1), \text{ and }
		\tilde{\varpi}  := \frac{c_1\underline{t}\underline{\kappa}\gamma ( 1/L_f + 8/m)}{4\underline{\kappa}\gamma + \frac{16\bar{\kappa}^2}m + 2L_f} > 0.
	\end{equation*}
\end{lemma}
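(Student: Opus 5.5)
The goal is a one-step contraction
\begin{equation*}
F(x_{k+1})-F(x^{\star}) \le \tilde{\tau}\,(F(x_k)-F(x^{\star})) + (nc_2+\tilde{\varpi}\kappa_{\mathrm{eg}}^2)\Delta_k^2 ,
\end{equation*}
which we then unroll over $k=0,\dots,T-1$. Unrolling gives $\tilde\tau^T(F(x_0)-F(x^\star))+(nc_2+\tilde\varpi\kappa_{\mathrm{eg}}^2)\sum_{k=0}^{T-1}\tilde\tau^{T-1-k}\Delta_k^2$, and this is exactly \eqref{equ:linearconv_F}.

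To obtain the contraction, we start from the sufficient decrease \eqref{equ:lambda} together with \eqref{equ:lemdd2} and $t_k\ge\underline t$:
\begin{equation*}
F(x_{k+1})-F(x_k)\le -c_1\underline t\,\underline\kappa\gamma\|d_k\|^2+nc_2\Delta_k^2 .
\end{equation*}
It therefore suffices to bound $F(x_k)-F(x^\star)$ above by a multiple of $\|d_k\|^2$ plus a multiple of $\Delta_k^2$, i.e. an error bound of the form
\begin{equation*}
F(x_k)-F(x^\star)\le \frac{4\underline\kappa\gamma+16\bar\kappa^2/m+2L_f}{4}\,\|d_k\|^2+C\kappa_{\mathrm{eg}}^2\Delta_k^2 .
\end{equation*}
With this, $-\|d_k\|^2\le -\frac{4}{(\cdots)}(F(x_k)-F^\star)+\dots$, which produces $\tilde\tau=1-c_1\underline t\underline\kappa\gamma/(\cdots)$, and the $\Delta_k^2$ coefficient becomes $\tilde\varpi\kappa_{\mathrm{eg}}^2$. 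The denominator's form suggests $C=(1/L_f+8/m)/4$.

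To prove the error bound, we use convexity of $F$ at the point $x_k$ with a subgradient. We have $\nabla f(x_k)+\partial h(x_k)$, but we only control $\partial h(x_k+d_k)$. So the plan is to split
\begin{equation*}
F(x_k)-F^\star=[F(x_k)-F(x_k+d_k)]+[F(x_k+d_k)-F^\star].
\end{equation*}
For the second bracket, convexity gives $F(x_k+d_k)-F^\star\le \langle v,x_k+d_k-x^\star\rangle$ with $v=\nabla f(x_k+d_k)+r_k-g_k-H_kd_k\in\partial F(x_k+d_k)$. Bounding $\|v\|$ as in Lemma~\ref{lem:station} by $(L_f+2\bar\kappa)\|d_k\|+\kappa_{\mathrm{eg}}\Delta_k$, Young's inequality yields $\frac{1}{2\eta}\|v\|^2+\frac{\eta}{2}\|x_k+d_k-x^\star\|^2$. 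For the remaining distance we invoke Lemma~\ref{lem:squarebd} and choose $\eta\sim m/4$, so that the $F(x_k)-F^\star$ term it generates is absorbed with factor $1/2$ on the left; the Young parameter $p$ in Lemma~\ref{lem:squarebd} is set to $L_f$, which is where the $1/L_f$ in $\tilde\varpi$ comes from.

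The cleaner route for the first bracket avoids $\nabla f(x_k+d_k)$: by convexity of $f$ and of $h$ at $x_k+d_k$, $F(x_k)-F(x_k+d_k)\ge$ a linear term, but we need an upper bound. Since $f$ is convex, $f(x_k)-f(x_k+d_k)\le -\nabla f(x_k)^\top d_k$, and $h(x_k)-h(x_k+d_k)\le$ \dots. That is not directly available, so instead we would simply use the direct lower bound $F(x_k+d_k)-F^\star\ge 0$ in the other direction and absorb via the $1/2$ trick.

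The main obstacle is this constant bookkeeping: arranging the Young parameters so that the coefficient of $\|d_k\|^2$ matches $\underline\kappa\gamma+4\bar\kappa^2/m+L_f/2$. Concretely, $\|H_kd_k\|+\|r_k\|\le 2\bar\kappa\|d_k\|$ squared with factor $2/m\cdot 2$ gives $16\bar\kappa^2/m$, while $L_f$ and $\underline\kappa\gamma$ come from Lemma~\ref{lem:squarebd}. I would first try to reproduce these exact constants, and if they do not match, accept slightly different constants of the same structure.
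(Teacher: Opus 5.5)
\textbf{There is a genuine gap.} The error bound you reduce the lemma to,
\[
F(x_k)-F(x^{\star})\le D\|d_k\|^2+C\kappa_{\mathrm{eg}}^2\Delta_k^2,
\]
is false in general once $h$ is nonsmooth. Take $n=1$, $f(x)=\tfrac12x^2$, $h(x)=\lambda|x|$ with $\lambda>0$, $H_k=1$, $\gamma=1$, and exact gradients $g_k=\nabla f(x_k)$. Then \eqref{eq:gradienterror} holds for every $\Delta_k>0$. The subproblem gives $d_k=-x_k$ and $x^{\star}=0$, so
\[
F(x_k)-F(x^{\star})=\tfrac12\|d_k\|^2+\lambda\|d_k\|,
\]
which is not bounded by $D\|d_k\|^2+C\kappa_{\mathrm{eg}}^2\Delta_k^2$ when $|x_k|$ and $\Delta_k$ are small.

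The failure sits in your first bracket: $F(x_k)-F(x_k+d_k)$ can be of order $\|d_k\|$, not $\|d_k\|^2$. Your fallback does not help. Using $F(x_k+d_k)-F(x^{\star})\ge0$ only gives $F(x_k)-F(x_k+d_k)\le F(x_k)-F(x^{\star})$ with coefficient one, so there is nothing to absorb. The information is already lost in your opening step, where you weaken $c_1t_k\Phi_k$ to $-c_1\underline{t}\,\underline{\kappa}\gamma\|d_k\|^2$. That discards exactly the part of the decrease that is proportional to $F(x_k)-F(x^{\star})$.

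\textbf{The missing idea is to keep $\Phi_k$ itself.} You already noted $f(x_k)-f(x_k+d_k)\le-\nabla f(x_k)^{\top}d_k$. Adding $h(x_k)-h(x_k+d_k)$ gives exactly
\[
F(x_k)-F(x_k+d_k)\le-\Phi_k+(g_k-\nabla f(x_k))^{\top}d_k,
\]
so no separate bound on $h(x_k)-h(x_k+d_k)$ is needed. Write $E_k:=F(x_k)-F(x^{\star})$. Since $\Phi_k\le0$ and $t_k\ge\underline{t}$, \eqref{equ:lambda} gives $-\Phi_k\le(E_k-E_{k+1}+nc_2\Delta_k^2)/(c_1\underline{t})$. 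Likewise $\|d_k\|^2\le(E_k-E_{k+1}+nc_2\Delta_k^2)/(c_1\underline{t}\,\underline{\kappa}\gamma)$. You then show $E_k\le a(E_k-E_{k+1})+\theta E_k+b\Delta_k^2$ with $\theta<1$ and rearrange, rather than going through an error bound.

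This is what the paper does, with a slightly different split of $E_k$ into four terms:
\begin{itemize}
\item $f(x_k)-f(x^{\star})-\nabla f(x_k)^{\top}(x_k-x^{\star})\le0$, by convexity of $f$;
\item $-\Phi_k$, handled as above;
\item $h(x_k+d_k)-h(x^{\star})+g_k^{\top}(x_k+d_k-x^{\star})\le(r_k-H_kd_k)^{\top}(x_k+d_k-x^{\star})$, bounded by Young's inequality with weight $m/4$ and Lemma~\ref{lem:squarebd} with $p=L_f$, contributing $\tfrac12E_k$;
\item $(\nabla f(x_k)-g_k)^{\top}(x_k-x^{\star})$, bounded by Young's inequality with $q=m/4$, contributing $\tfrac14E_k$.
\end{itemize}
The leftover $\tfrac14E_k$ on the left is what produces the denominator $4\underline{\kappa}\gamma+16\bar{\kappa}^2/m+2L_f$ in $\tilde{\tau}$. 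Your target $D=(\cdots)/4$ would instead have given $1-4c_1\underline{t}\,\underline{\kappa}\gamma/(\cdots)$, inconsistent with the $\tilde\tau$ you claim it produces.

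Finally, the lemma fixes $\tilde\tau$ and $\tilde\varpi$, so ``accepting slightly different constants'' would not prove it as stated. Routing the second bracket through $v\in\partial F(x_k+d_k)$, which contains $\nabla f(x_k+d_k)$, brings in $(L_f+(2-\gamma)\bar{\kappa})^2/m$-type terms that the paper's split avoids.
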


\begin{proof}
	Denote $E_k:=F(x_k)-F(x^{\star})$.
	It follows from \eqref{equ:residual}, \eqref{equ:inexactcriterion}, \eqref{equ:squarebound} and the convexity of $h$ that
		\begin{align}\label{equ:thmlinear1}
			&\quad h(x_k+d_k)-h(x^{\star})+g_k^{\top}(x_k+d_k-x^{\star})  \le  (r_k-H_kd_k)^{\top}(x_k+d_k-x^{\star})  \nonumber \\
			& \le   \|r_k-H_kd_k  \|_{H_k^{-1}} \|x_k+d_k-x^{\star}  \|_{H_k}  \le   \sqrt{\bar{\kappa}}\left( \|r_k \|_{H_k^{-1}}+ \|H_kd_k  \|_{H_k^{-1}} \right)  \|x_k+d_k-x^{\star}  \| \nonumber \\
			& \le   2\sqrt{\bar{\kappa}} \| d_k \|_{H_k} \|x_k+d_k-x^{\star}  \|  \le   \frac{4\bar{\kappa}}{m} \| d_k \|_{H_k}^2+\frac{m}{4} \|x_k+d_k-x^{\star}  \|^2 \nonumber \\
			& \le   \frac{4\bar{\kappa}}{m} \| d_k \|_{H_k}^2+\frac12 \left(E_k+ \left(\frac{p+L_f}2 - \underline{\kappa}\gamma \right)\|d_k\|^2 + \frac{\kappa_{\mathrm{eg}}^2}{2p} \Delta_k^2  \right)  \nonumber \\
			& \le   \left(\frac{4\bar{\kappa}^2}{m}+ \frac{p+L_f}4\right) \| d_k \|^2 + \frac12 E_k + \frac{\kappa_{\mathrm{eg}}^2}{4p} \Delta_k^2.
		\end{align}
	
	By \eqref{equ:lambda} and \eqref{equ:tlowerbd},
	\begin{equation}\label{equ:thmlinear2}
		-(g_k^{\top}d_k+h(x_k+d_k)-h(x_k) )=-\Phi_k \le \frac{E_k - E_{k+1} + nc_2\Delta_k^2}{c_1\underline{t}}.
	\end{equation}
	This, together with \eqref{equ:lemdd2}, yields
	\begin{equation}\label{equ:thmlinear3}
		\| d_k \|^2
		\le \frac{E_k - E_{k+1} + nc_2\Delta_k^2}{c_1\underline{t}\underline{\kappa}\gamma}.
	\end{equation}
	Hence, by \eqref{equ:SCdis}, \eqref{equ:thmlinear1}--\eqref{equ:thmlinear3} and Assumption~\ref{as:ZOPN-g},
		\begin{align}\label{Ek}
			E_k &= f(x_k)+h(x_k)-f(x^{\star})-h(x^{\star})  \nonumber \\
			&= f(x_k)-f(x^{\star})+\nabla f(x_k)^{\top} (x^{\star}-x_k )  + h(x_k)-h(x_k+d_k)-g_k^{\top}d_k \nonumber \\
			& \hspace{1.2em} + h(x_k+d_k)-h(x^{\star})+g_k^{\top} (x_k+d_k-x^{\star}  )  + (\nabla f(x_k)-g_k )^{\top}  (x_k-x^{\star}  )  \nonumber \\
			& \le \frac{1}{c_1 \underline{t}} (E_k-E_{k+1}  ) + \frac{nc_2}{c_1\underline{t}}\Delta_k^2 +  \left(\frac{4\bar{\kappa}^2}{m}+\frac{p+L_f}4 \right)\frac{1}{c_1  \underline{t}\underline{\kappa}\gamma} (E_k-E_{k+1} ) \nonumber \\
			& \hspace{1.2em} +  \left(\frac{4\bar{\kappa}^2}{m}+\frac{p+L_f}4 \right)\frac{nc_2}{c_1  \underline{t}\underline{\kappa}\gamma} \Delta_k^2 +\frac12 E_k + \frac{\kappa_{\mathrm{eg}}^2}{4p}\Delta_k^2  \nonumber\\
			& \hspace{1.2em} + \frac{1}{2q} \|\nabla f(x_k)- g_k \|^2 + \frac{q}{2} \|x_k-x^{\star} \|^2 \nonumber\\
			& \le \frac{1}{c_1 \underline{t}}\left(1+\frac{1}{\underline{\kappa}\gamma }\left(\frac{4\bar{\kappa}^2}{m}+\frac{p+L_f}4 \right) \right) (E_k-E_{k+1}  ) + \left( \frac12 + \frac{q}{m} \right) E_k  \nonumber\\
			& \hspace{1.2em} + \frac{nc_2}{c_1 \underline{t}}\left(1+\frac{1}{\underline{\kappa}\gamma }\left(\frac{4\bar{\kappa}^2}{m}+\frac{p+L_f}4 \right) \right) \Delta_k^2 + \left( \frac{1}{4p} + \frac{1}{2q} \right) \kappa_{\mathrm{eg}}^2 \Delta_k^2
		\end{align}
	holds for some $p,q>0$ to be determined.
	
	\eqref{Ek} can be rewritten as
	\begin{equation*}
		E_{k+1}\le \tau(p,q) E_k + (nc_2 + \varpi(p,q)\kappa_{\mathrm{eg}}^2)\Delta_k^2,
	\end{equation*}
	where
	\begin{equation*}
		\tau(p,q) := 1 - \frac{c_1\underline{t}\underline{\kappa}\gamma \left(\frac12 - \frac{q}m \right)}{\underline{\kappa}\gamma + \frac{4\bar{\kappa}^2}m + \frac{p+L_f}4} \quad \text{ and } \quad
		\varpi(p,q) :=  \frac{c_1\underline{t}\underline{\kappa}\gamma  \left( \frac{1}{4p} + \frac{1}{2q} \right)}{\underline{\kappa}\gamma + \frac{4\bar{\kappa}^2}m + \frac{p+L_f}4}.
	\end{equation*}
	It is difficult to simultaneously minimize $\tau(p,q)$ and $\varpi(p,q)$. We simply set $p=L_f$ and $q=m/4$, which suffices for our subsequent analysis. In such case, we have
	\begin{equation*}
		\begin{aligned}
			\tilde{\tau} &= \tau\left(L_f, \frac{m}4\right) = 1 - \frac{c_1\underline{t}\underline{\kappa}\gamma }{4\underline{\kappa}\gamma + \frac{16\bar{\kappa}^2}m + 2L_f} \in (0,1), \\
			\tilde{\varpi} &= \varpi\left( L_f, \frac{m}4\right) =   \frac{c_1\underline{t}\underline{\kappa}\gamma ( 1/L_f + 8/m)}{4\underline{\kappa}\gamma + \frac{16\bar{\kappa}^2}m + 2L_f} >0.
		\end{aligned}
	\end{equation*}
	Then \eqref{equ:linearconv_F} holds true by applying $E_{k+1}\le \tilde{\tau}E_k + (nc_2+\tilde{\varpi}\kappa_{\mathrm{eg}}^2)\Delta_k^2$ recursively.
\end{proof}

If the sampling radius decays to zero with rate $\sum_{k=0}^{\infty}\Delta_k^2\tilde{\tau}^{-k}=S_2/n<\infty$, then
\begin{equation}\label{eq:Flinear}
	F(x_T) - F(x^{\star}) \le \tilde{\tau}^T \left( F(x_0)-F(x^{\star}) + \left( c_2 + \frac{\tilde{\varpi} \kappa_{\mathrm{eg}}^2}n \right)\frac{S_2}{\tilde{\tau}} \right).
\end{equation}
Therefore, it follows from \eqref{equ:SCdis} that
\begin{equation}\label{eq:xlinear}
	\|x_T - x^{\star} \| \le \tilde{\tau}^{T/2} \sqrt{\frac2m \left( F(x_0)-F(x^{\star}) + \left( c_2 + \frac{\tilde{\varpi} \kappa_{\mathrm{eg}}^2}n \right)\frac{S_2}{\tilde{\tau}} \right)},
\end{equation}
implying that the sequence $\{\|x_k - x^{\star}\|\}$ is summable.
Moreover, \eqref{eq:Flinear} and \eqref{eq:xlinear} indicate that Algorithm \ref{ZOPN} converges R-linearly to the minimizer in terms of objective values and iterates, respectively, provide that the sampling radius is exponentially weighted square-summable.

The iteration and oracle complexities of Algorithm \ref{ZOPN} can be derived from Lemma \ref{thm:linearconv}.

\begin{theorem}\label{cor:linearNF}
	Suppose that Assumptions \ref{as:problem1}, \ref{as:ZOPN-g}, \ref{as:ZOPN-H}, and \ref{ass:3} hold.
	Then, for any $\epsilon>0$,
	
	(i) if we set
	\begin{equation}\label{equ:scmuconst}
		\Delta_k \equiv \Delta \le \sqrt{\frac{(1-\tilde{\tau})\epsilon}{2(nc_2+\tilde{\varpi}\kappa_{\mathrm{eg}}^2)}},
	\end{equation}
	then the number of iterations required for
	Algorithm~\ref{ZOPN} to obtain an $\epsilon$-optimal solution of \eqref{problem} in terms of objective values (i.e., $F(x_k)-F(x^{\star})\le \epsilon$) is at most
	\begin{equation*}
		\tilde{\mathcal{K}}_1(\epsilon) = \left \lceil  \mathrm{log}_{\tilde \tau}\left( \frac{\epsilon}{2(F(x_0)-F(x^{\star}))} \right) \right \rceil;
	\end{equation*}
	
	(ii) if $\Delta_k$ satisfies
	\begin{equation}\label{equ:scmudecay}
		\sum_{k=0}^{\infty} \frac{\Delta_k^2}{\tilde{\tau}^k} = \frac{S_2}n <  \infty,
	\end{equation}
	then Algorithm \ref{ZOPN} obtains an $\epsilon$-optimal solution of \eqref{problem} in terms of objective values   within at most
	\begin{equation*}
		\tilde{\mathcal{K}}_2(\epsilon) = \left \lceil  \mathrm{log}_{\tilde \tau}\left( \frac{\epsilon}{F(x_0)-F(x^{\star}) +  (c_2 + \tilde{\varpi} \kappa_{\mathrm{eg}}^2/n)S_2 / \tilde{\tau}}  \right) \right \rceil
	\end{equation*}
	iterations;
	
	(iii)
	if the number of function evaluations required for gradient estimation satisfies $\varsigma_g=\mathcal{O}(n)$, and the average number of function evaluations per iteration required for Hessian approximation satisfies $\varsigma_H = \mathcal{O}(n)$, then Algorithm~\ref{ZOPN} requires at most $\mathcal{O}( n\mathrm{log}(1/\epsilon ))$  function evaluations  to produce an  $\epsilon$-optimal solution of \eqref{problem}.
\end{theorem}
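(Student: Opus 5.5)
The plan is to derive all three parts directly from Lemma~\ref{thm:linearconv}, that is, from the one-step recursion
\[
E_{k+1}\le \tilde{\tau}E_k + (nc_2+\tilde{\varpi}\kappa_{\mathrm{eg}}^2)\Delta_k^2
\]
and its unrolled form \eqref{equ:linearconv_F}, where $E_k=F(x_k)-F(x^{\star})$.

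\textbf{Part (i).} With $\Delta_k\equiv\Delta$, the weighted sum in \eqref{equ:linearconv_F} is a geometric series:
\[
\tilde{\tau}^{T-1}\sum_{k=0}^{T-1}\tilde{\tau}^{-k}=\sum_{j=0}^{T-1}\tilde{\tau}^{j}\le \frac{1}{1-\tilde{\tau}}.
\]
Hence
\[
E_T\le \tilde{\tau}^T E_0 + \frac{(nc_2+\tilde{\varpi}\kappa_{\mathrm{eg}}^2)\Delta^2}{1-\tilde{\tau}}.
\]
The choice \eqref{equ:scmuconst} makes the second term at most $\epsilon/2$. For $T\ge\tilde{\mathcal{K}}_1(\epsilon)$ we have $\tilde{\tau}^T\le \epsilon/(2E_0)$, because $\log_{\tilde\tau}$ is decreasing since $\tilde\tau\in(0,1)$. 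So the first term is also at most $\epsilon/2$. If $E_0\le\epsilon/2$, the logarithm is nonpositive and the claim is trivial, which covers the degenerate case.

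\textbf{Part (ii).} This is exactly \eqref{eq:Flinear}. Substituting $\sum_k\Delta_k^2\tilde{\tau}^{-k}\le S_2/n$ into \eqref{equ:linearconv_F} gives
\[
E_T\le \tilde{\tau}^T\bigl(E_0+(c_2+\tilde{\varpi}\kappa_{\mathrm{eg}}^2/n)S_2/\tilde{\tau}\bigr).
\]
Requiring the right-hand side to be at most $\epsilon$ and taking $\log_{\tilde\tau}$ (again reversing the inequality) yields $T\ge\tilde{\mathcal{K}}_2(\epsilon)$.

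\textbf{Part (iii).} I would repeat the counting argument from Theorem~\ref{cor:globalcomplexity}(iii). Each iteration costs $\varsigma_g+\varsigma_H=\mathcal{O}(n)$ evaluations on average for the estimators. By \eqref{equ:tlowerbd}, the backtracking line search uses at most $\lfloor\log_\beta(\underline{t}/\bar t)\rfloor+1$ extra evaluations per iteration, and this is a constant independent of $n$, since $\kappa_{\mathrm{eg}}=\Theta(\sqrt n)$ makes $\underline t$ dimension-free.

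The iteration counts $\tilde{\mathcal{K}}_1$ and $\tilde{\mathcal{K}}_2$ are $\mathcal{O}(\log(1/\epsilon)/\log(1/\tilde\tau))$. Here $\tilde\tau$ depends only on $c_1,\underline t,\underline\kappa,\gamma,\bar\kappa,m,L_f$, so it is independent of $n$ and $\epsilon$. In case (ii), the quantity $\tilde{\varpi}\kappa_{\mathrm{eg}}^2/n$ stays bounded because $\kappa_{\mathrm{eg}}^2=\mathcal{O}(n)$, so it only contributes an additive constant inside the logarithm.

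\textbf{Main obstacle.} The only delicate point is this $n$-independence bookkeeping: confirming that $\tilde\tau$, and the constants entering the logarithm, do not scale with $n$. Once that is checked, multiplying the $\mathcal{O}(n)$ per-iteration cost by the $\mathcal{O}(\log(1/\epsilon))$ iteration count gives $\mathcal{O}(n\log(1/\epsilon))$ in both cases.
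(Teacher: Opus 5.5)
Your proposal is correct and follows the paper's proof. Parts (i) and (ii) come from substituting the constant radius (with the geometric-sum bound $1/(1-\tilde\tau)$), or the weighted summability condition, into \eqref{equ:linearconv_F} of Lemma~\ref{thm:linearconv}, and part (iii) reuses the counting argument of Theorem~\ref{cor:globalcomplexity}(iii). Your explicit check that $\underline t$, $\tilde\tau$ and $\tilde\varpi\kappa_{\mathrm{eg}}^2/n$ stay dimension-free when $\kappa_{\mathrm{eg}}=\Theta(\sqrt n)$ is left implicit in the paper (which relies only on its remark after Lemma~\ref{lem:stepsizeuppbd}), so it is a useful addition rather than a deviation.
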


\begin{proof}
	(i) If $\Delta_k$ is set according to \eqref{equ:scmuconst}, then \eqref{equ:linearconv_F} immediately implies  that
	\begin{equation*}
		\begin{aligned}
			F\left( x_{\tilde{\mathcal{K}}_1(\epsilon)} \right) - F(x^{\star}) & \le \tilde{\tau}^{\tilde{\mathcal{K}}_1(\epsilon)}(F(x_0)-F(x^{\star})) + (nc_2+\tilde{\varpi}\kappa_{\mathrm{eg}}^2)\Delta^2 \tilde{\tau}^{\tilde{\mathcal{K}}_1(\epsilon)-1} \sum_{k=0}^{\tilde{\mathcal{K}}_1(\epsilon)-1}\frac{1}{\tilde{\tau}^k} \\
			&\le \tilde{\tau}^{\tilde{\mathcal{K}}_1(\epsilon)}(F(x_0)-F(x^{\star})) + \frac{nc_2+\tilde{\varpi}\kappa_{\mathrm{eg}}^2}{1-\tilde{\tau}}\Delta^2 \le \epsilon.
		\end{aligned}
	\end{equation*}
	
	(ii) If $\Delta_k$ satisfies \eqref{equ:scmudecay}, then \eqref{equ:linearconv_F} implies that
	\begin{equation*}
		F\left( x_{\tilde{\mathcal{K}}_2(\epsilon)} \right) - F(x^{\star})  \le \tilde{\tau}^{\tilde{\mathcal{K}}_2(\epsilon)} \left( F(x_0)-F(x^{\star}) + \left( c_2 + \frac{\tilde{\varpi} \kappa_{\mathrm{eg}}^2}n \right)\frac{S_2}{\tilde{\tau}} \right)  \le \epsilon.
	\end{equation*}	
	
	(iii) In both cases (i) and (ii),
	following the same argument as in the proof of Theorem \ref{cor:globalcomplexity}, we derive that the worst-case oracle complexity of Algorithm~\ref{ZOPN} is $  \mathcal{O} ( n\mathrm{log} (1/\epsilon ))$.
\end{proof}

\section{Local Superlinear Convergence of Algorithm~\ref{ZOPN}}\label{sec:Local}

Equipped with the global convergence results established in Section~\ref{sec:Conv}, we now turn to investigate the local convergence rate of Algorithm~\ref{ZOPN}.

\subsection{Local R-superlinear convergence analysis.}\label{subsec:local}

Throughout this section, we assume that the initial step length in the line search scheme \eqref{equ:lambda} is set to $\bar t=1$, and impose the following assumption.

\begin{assumption}\label{as:local}
	The subproblem \eqref{equ:subproblem} is solved exactly, i.e., $\gamma=1$ in \eqref{equ:inexactcriterion}.
\end{assumption}

We first prove that the unit step size is always accepted in the line search scheme \eqref{equ:lambda} after sufficiently many iterations.

\begin{lemma}\label{lem:unitstep}
	Suppose that Assumptions~\ref{as:problem1}--\ref{as:ZOPN-DM}  and \ref{as:local} hold. If the line search parameter
	$
	c_2 \ge \frac{\kappa_{\mathrm{eg}}^2}{n\underline{\kappa}(1-2c_1)},
	$
	and the sampling radius sequence $\{\Delta_k\}$ is square-summable,
	then the unit step size $t_k=1$ satisfies~\eqref{equ:lambda} for all sufficiently large $k$.
\end{lemma}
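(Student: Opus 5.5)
We want to show that for large $k$,
\[
F(x_k+d_k)-F(x_k)\le c_1\Phi_k+nc_2\Delta_k^2 .
\]
Instead of the global bound \eqref{equ:lemdd1}, which uses only $L_f$, we expand $f$ to second order and exploit the Dennis--Mor\'e condition together with $\gamma=1$.

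\textbf{Step 1: the iterates and steps converge.} By Lemma~\ref{thm:globalconv} and square-summability of $\{\Delta_k\}$, we have $\sum_k\|d_k\|^2<\infty$, so $d_k\to0$. Assumption~\ref{as:problem2} makes $f$, hence $F$, strongly convex, so the discussion after Lemma~\ref{thm:globalconv} gives $x_k\to x^\star$.

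\textbf{Step 2: second-order expansion of $f$.} Using the Lipschitz Hessian,
\[
f(x_k+d_k)-f(x_k)\le \nabla f(x_k)^\top d_k+\tfrac12 d_k^\top\nabla^2 f(x_k)d_k+\tfrac{L_H}{6}\|d_k\|^3 .
\]
Adding $h(x_k+d_k)-h(x_k)$ gives
\[
F(x_k+d_k)-F(x_k)\le \Phi_k+(\nabla f(x_k)-g_k)^\top d_k+\tfrac12 d_k^\top\nabla^2 f(x_k)d_k+\tfrac{L_H}{6}\|d_k\|^3 .
\]
Write
\[
d_k^\top\nabla^2 f(x_k)d_k=d_k^\top H_kd_k+d_k^\top(\nabla^2f(x^\star)-H_k)d_k+d_k^\top(\nabla^2 f(x_k)-\nabla^2 f(x^\star))d_k .
\]
The last term is at most $L_H\|x_k-x^\star\|\|d_k\|^2$. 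The middle term is at most $\varrho_k'\|d_k\|^2$, where $\varrho_k'$ is the Dennis--Mor\'e ratio along $d_k$.

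A technical point arises here. $\varrho_k$ is defined with $x_{k+1}-x_k=t_kd_k$, which is a scalar multiple of $d_k$, so the ratio is the same whatever $t_k$ is. Thus $\varrho_k'=\varrho_k\to0$, with no circularity.

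\textbf{Step 3: combine the terms.} Since $\gamma=1$, \eqref{equ:lemdd2} gives $\Phi_k\le -d_k^\top H_kd_k$, so $\tfrac12 d_k^\top H_kd_k\le -\tfrac12\Phi_k$. Hence
\[
F(x_k+d_k)-F(x_k)\le \tfrac12\Phi_k+(\nabla f(x_k)-g_k)^\top d_k+\epsilon_k\|d_k\|^2,
\]
where
\[
\epsilon_k:=\tfrac12\varrho_k+\tfrac12L_H\|x_k-x^\star\|+\tfrac{L_H}{6}\|d_k\|\to0 .
\]
For the gradient-error term, apply Young's inequality with weight $\delta=\underline{\kappa}(1-2c_1)/2$:
\[
(\nabla f(x_k)-g_k)^\top d_k\le \tfrac{\delta}{2}\|d_k\|^2+\tfrac{\kappa_{\mathrm{eg}}^2}{2\delta}\Delta_k^2 .
\]
We need $\tfrac{\kappa_{\mathrm{eg}}^2}{2\delta}\le nc_2$, i.e. $c_2\ge \kappa_{\mathrm{eg}}^2/(n\underline{\kappa}(1-2c_1))$, which is exactly the hypothesis on $c_2$.

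It remains to show $\tfrac12\Phi_k+(\tfrac{\delta}{2}+\epsilon_k)\|d_k\|^2\le c_1\Phi_k$. This is equivalent to
\[
(\tfrac{\delta}{2}+\epsilon_k)\|d_k\|^2\le (\tfrac12-c_1)(-\Phi_k).
\]
The right-hand side is at least $(\tfrac12-c_1)\underline{\kappa}\|d_k\|^2=\delta\|d_k\|^2$. So the inequality holds once $\epsilon_k\le\delta/2$, which is true for all large $k$.

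\textbf{Main obstacle.} The delicate part is the bookkeeping of the Young split: the $\Delta_k^2$ coefficient must match the stated lower bound on $c_2$ while still leaving the positive margin $\delta/2$ in the $\|d_k\|^2$ term to absorb $\epsilon_k$. If the constants do not close exactly as above, I would adjust the Young weight, using $\delta'<\delta$ with $\kappa_{\mathrm{eg}}^2/(2\delta')\le nc_2$.
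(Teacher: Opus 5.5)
Your proposal is correct and follows essentially the same route as the paper: a second-order expansion with the Lipschitz Hessian, splitting $\nabla^2 f(x_k)-H_k$ into an $L_H\|x_k-x^\star\|$ part and a Dennis--Mor\'e part, using $\Phi_k\le -d_k^\top H_kd_k$ from $\gamma=1$, and a Young split on the gradient error calibrated to the stated bound on $c_2$. The paper does that split in the $H_k$/$H_k^{-1}$ norms with weight $1/2-c_1$, which is equivalent to your Euclidean choice. Your constants close exactly, since your condition $\epsilon_k\le\delta/2$ is precisely the paper's $\bigl(L_H(\|d_k\|+3\|x_k-x^\star\|)+3\varrho_k\bigr)/(6\underline{\kappa})\le(1-2c_1)/4$, so the fallback in your last paragraph is unnecessary; your remark that $\varrho_k$ does not depend on $t_k$ is a useful clarification that the paper leaves implicit.
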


\begin{proof}
	It follows from Assumption~\ref{as:problem2} that
		\begin{align}\label{eq:unitstep1}
			F(x_k+d_k) & \le f(x_k) + \nabla f(x_k)^{\top}d_k + \frac12 d_k^{\top}\nabla^2 f(x_k)d_k + \frac{L_H}{6}\|d_k\|^3 + h(x_k+d_k)  \nonumber \\
			& = F(x_k) + \Phi_k + (\nabla f(x_k)-g_k)^{\top}d_k + \frac12 d_k^{\top}\nabla^2 f(x_k)d_k + \frac{L_H}{6}\|d_k\|^3  \nonumber \\
			& \le F(x_k) + \Phi_k + \frac{1}{1-2c_1}\|\nabla f(x_k)-g_k\|_{H_k^{-1}}^2 + \frac{1-2c_1}{4} \|d_k\|_{H_k}^2   \nonumber \\
			& \hspace{1.2em} + \frac12 d_k^{\top}\nabla^2 f(x_k)d_k + \frac{L_H}{6}\|d_k\|^3  \nonumber \\
			& \le F(x_k) + \frac{2c_1+1}{4} \Phi_k + \frac{3-2c_1}{4}\Phi_k + \frac12 d_k^{\top}\left(\nabla^2 f(x_k) + \frac{1-2c_1}{2} H_k \right) d_k \nonumber \\
			& \hspace{1.2em} + \frac{\kappa_{\mathrm{eg}}^2}{\underline{\kappa}(1-2c_1) }\Delta_k^2 - \frac{L_H}{6\underline{\kappa}}\|d_k\|\Phi_k  \nonumber \\
			& \le F(x_k) + \left( \frac{2c_1+1}{4} - \frac{L_H}{6\underline{\kappa}}\|d_k\|\right) \Phi_k + \frac12 d_k^{\top}\left(\nabla^2 f(x_k) - H_k \right) d_k + nc_2 \Delta_k^2,
		\end{align}
	where the second inequality uses Young's inequality
	\begin{equation*}
		(\nabla f(x_k)-g_k)^{\top}d_k \le \frac{1}{2p}\|\nabla f(x_k)-g_k\|_{A^{-1}}^2 + \frac{p}{2} \|d_k\|_{A}^2
	\end{equation*}
	with $A=H_k\succeq\underline{\kappa}I$ and $p=1/2 - c_1>0$, the third inequality follows from Assumptions~\ref{as:ZOPN-g} and \ref{as:ZOPN-H}, along with
	\begin{equation}\label{eq:unitstep2}
		\|d_k\|^2 \le \frac{1}{\underline{\kappa}}d_k^{\top}H_k d_k \le -\frac{1}{\underline{\kappa}}\Phi_k
	\end{equation}
	by virtue of \eqref{equ:lemdd2} and Assumption~\ref{as:local}, and the last inequality is due to \eqref{equ:lemdd2} and the definition of $c_2$.
	
	By Assumptions~\ref{as:problem2} and \ref{as:ZOPN-DM}, we deduce from \eqref{eq:unitstep2} that
	\begin{equation*}
		\begin{aligned}
			\frac12 d_k^{\top}\left(\nabla^2 f(x_k) - H_k \right) d_k
			& = \frac12 d_k^{\top}\left(\nabla^2 f(x_k) - \nabla^2 f(x^{\star}) \right) d_k + \frac12 d_k^{\top}\left(\nabla^2 f(x^{\star}) - H_k \right) d_k \\
			& \le \frac12 \|\nabla^2 f(x_k) - \nabla^2 f(x^{\star})\| \|d_k\|^2 + \frac12 \|(\nabla^2 f(x^{\star}) - H_k)d_k\| \|d_k\| \\
			& \le \frac{L_H}{2}\|x_k - x^{\star}\| \|d_k\|^2 + \frac{\varrho_k}{2}\|d_k\|^2 \\
			& \le -\frac{L_H \|x_k - x^{\star}\| + \varrho_k}{2\underline{\kappa}} \Phi_k.
		\end{aligned}
	\end{equation*}
	Substituting it into \eqref{eq:unitstep1}, we obtain
	\begin{equation*}
		F(x_k + d_k) \le F(x_k) + \left( \frac{2c_1+1}{4} - \frac{L_H( \|d_k\| + 3\|x_k - x^{\star}\|) + 3 \varrho_k }{6\underline{\kappa}}\right) \Phi_k  + nc_2 \Delta_k^2.
	\end{equation*}
	
	Since $\{\Delta_k\}$ is square-summable, we deduce from Lemma~\ref{lem:station}, Lemma~\ref{thm:globalconv}, and Assumptions~\ref{as:problem1} and~\ref{as:problem2} that $\|d_k\|\to 0$ and $x_k\to x^{\star}$ as $k\to\infty$.
	Combining the fact that $\varrho_k\to 0$ from Assumption~\ref{as:ZOPN-DM}, it holds for sufficiently large $k$ that,
	\begin{equation*}
		\frac{L_H( \|d_k\| + 3\|x_k - x^{\star}\|) + 3 \varrho_k }{6\underline{\kappa}} \le \frac{1 - 2c_1}{4},
	\end{equation*}
	which implies
	\begin{equation*}
		F(x_k + d_k) \le F(x_k) + c_1 \Phi_k  + nc_2 \Delta_k^2,
	\end{equation*}
	and hence the unit step size $t_k=1$ is always accepted.
\end{proof}

The next auxiliary result helps us establish the relationship between $d_k$ in \eqref{equ:subproblem} and the search direction generated by the proximal Newton method with exact gradient and Hessian.

\begin{lemma}\label{prop:ddis}
	Given vectors $q_1,q_2\in\mathbb{R}^n$ and matrices $Q_1,Q_2\in\mathbb{S}_{++}^n$, let $d_1$ and $d_2$ be the search directions generated by solving
	\begin{equation*}
		d_i = \operatornamewithlimits{argmin}_{d\in \mathbb{R}^n}  q_i^{\top} d + \frac{1}{2}d^{\top}Q_i d + h(x + d), \quad i=1,2.
	\end{equation*}
	Then we have
	\begin{equation}\label{eq:ddis}
		\|d_1 - d_2\|_{Q_1}^2 \le 2\|q_1 - q_2\|_{Q_1^{-1}}^2 + 2\|(Q_2 - Q_1)d_2\|_{Q_1^{-1}}^2.
	\end{equation}
\end{lemma}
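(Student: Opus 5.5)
The plan is to compare the two first-order optimality conditions and use monotonicity of $\partial h$. By optimality of $d_1$ and $d_2$ there exist $s_i\in\partial h(x+d_i)$ with
\begin{equation*}
q_i + Q_i d_i + s_i = 0,\quad i=1,2.
\end{equation*}
Monotonicity of the subdifferential of the convex function $h$ gives $(s_1-s_2)^{\top}(d_1-d_2)\ge 0$. Substituting $s_i=-q_i-Q_id_i$ yields
\begin{equation*}
(q_2-q_1 + Q_2d_2 - Q_1 d_1)^{\top}(d_1-d_2)\ge 0 .
\end{equation*}
Next, I will rewrite $Q_2d_2-Q_1d_1 = -Q_1(d_1-d_2) + (Q_2-Q_1)d_2$. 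Then
\begin{equation*}
\|d_1-d_2\|_{Q_1}^2 \le \bigl(q_2-q_1+(Q_2-Q_1)d_2\bigr)^{\top}(d_1-d_2).
\end{equation*}

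To bound the right-hand side, I will apply Cauchy--Schwarz in the $Q_1$-geometry: $u^{\top}v\le \|u\|_{Q_1^{-1}}\|v\|_{Q_1}$. This gives
\begin{equation*}
\|d_1-d_2\|_{Q_1}\le \|q_2-q_1+(Q_2-Q_1)d_2\|_{Q_1^{-1}}.
\end{equation*}
(Alternatively, Young's inequality with weight $1/2$ gives the same bound directly.) Squaring, and then using the triangle inequality together with $(a+b)^2\le 2a^2+2b^2$, yields \eqref{eq:ddis}.

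I expect no real obstacle. The only care needed is choosing the decomposition so that the mismatch term involves $d_2$, as stated, and pairing norms as $Q_1^{-1}$ against $Q_1$. If $d_1=d_2$ the claim is trivial, so the division in the Cauchy--Schwarz step is harmless.
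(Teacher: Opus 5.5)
Your proof is correct and follows essentially the paper's route. The paper sums the two optimality inequalities for the linearized subproblems, which is exactly the monotonicity of $\partial h$ that you invoke, and then bounds the cross terms with Young's inequality in the $Q_1$/$Q_1^{-1}$ pairing. Your Cauchy--Schwarz step in fact yields the slightly sharper unsquared bound $\|d_1-d_2\|_{Q_1}\le \|q_1-q_2\|_{Q_1^{-1}}+\|(Q_2-Q_1)d_2\|_{Q_1^{-1}}$, from which the stated squared inequality follows.
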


\begin{proof}
	From the optimality condition, we also have
	\begin{equation*}
		d_i = \operatornamewithlimits{argmin}_{d\in \mathbb{R}^n}  q_i^{\top} d + d_i^{\top}Q_i d + h(x + d), \quad i=1,2.
	\end{equation*}
	Thus,
	\begin{equation*}
		\begin{aligned}
			q_1^{\top}d_1 + d_1^{\top}Q_1 d_1 + h(x + d_1) & \le q_1^{\top} d_2 + d_1^{\top}Q_1 d_2 + h(x + d_2), \\
			q_2^{\top} d_2 + d_2^{\top}Q_2 d_2 + h(x + d_2) & \le q_2^{\top} d_1 + d_2^{\top}Q_2 d_1 + h(x + d_1).
		\end{aligned}
	\end{equation*}
	Summing the above two inequalities yields
	\begin{equation}\label{eq:ddis1}
		(q_1 - q_2)^{\top}(d_1 - d_2) + d_1^{\top}Q_1 d_1 + d_2^{\top}Q_2 d_2 - d_1^{\top}(Q_1 + Q_2) d_2 \le 0.
	\end{equation}
	
	By Young's inequality,
	\begin{equation}
		(q_1 - q_2)^{\top}(d_1 - d_2)  \ge - \left| (q_1 - q_2)^{\top}(d_1 - d_2)\right|  \ge - \|q_1 - q_2 \|_{Q_1^{-1}}^2 - \frac14 \|d_1 - d_2\|_{Q_1}^2. \label{eq:ddis2}
	\end{equation}
	Completing the square and rearranging terms, we apply Young's inequality again to obtain
		\begin{align}\label{eq:ddis3}
			d_1^{\top}Q_1 d_1 + d_2^{\top}Q_2 d_2 - d_1^{\top}(Q_1 + Q_2) d_2
			& = \|d_1 - d_2\|_{Q_1}^2 - (d_1 - d_2)^{\top}(Q_2 - Q_1)d_2  \nonumber \\
			& \ge \|d_1 - d_2\|_{Q_1}^2 - \|(Q_2 - Q_1)d_2\|_{Q_1^{-1}}^2 - \frac14 \|d_1 - d_2\|_{Q_1}^2  \nonumber \\
			& = \frac34 \|d_1 - d_2\|_{Q_1}^2 - \|(Q_2 - Q_1)d_2\|_{Q_1^{-1}}^2.
		\end{align}
	Substituting \eqref{eq:ddis2} and \eqref{eq:ddis3} into \eqref{eq:ddis1}, we immediately arrive at \eqref{eq:ddis}.
\end{proof}

This lemma enables us to establish the main result of this section.

\begin{theorem}\label{thm:localrate}
	Suppose that all assumptions of Lemma~\ref{lem:unitstep} hold. Then it holds for sufficiently large $k$ that
	\begin{equation}\label{eq:localrate}
			\|x_{k+1} - x^{\star} \| \le \frac{\sqrt{2}\kappa_{\mathrm{eg}}}{\mu - \sqrt{2}\varrho_k}\Delta_k  + \frac{\sqrt{2}(L_H\|d_k\|+\varrho_k)}{\mu - \sqrt{2}\varrho_k} \|x_k - x^{\star} \|   + \frac{L_H}{2\mu}\left( 1+ \frac{\sqrt{2}\varrho_k}{\mu - \sqrt{2}\varrho_k}\right) \|x_k - x^{\star} \|^2.
	\end{equation}
	Furthermore,  if $\{\Delta_k\}$ converges Q-superlinearly to 0, then $\{x_k\}$ converges R-superlinearly to $x^{\star}$.
	
\end{theorem}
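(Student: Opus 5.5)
The plan is to work in the regime of Lemma~\ref{lem:unitstep}. For all sufficiently large $k$ we have $t_k=1$, so $x_{k+1}=x_k+d_k$. By Assumption~\ref{as:local}, $d_k$ is then the exact minimizer of \eqref{equ:subproblem}. The idea is to compare $d_k$ with the step $x^\star-x_k$, written as the exact solution of a second subproblem of the same form, and to apply Lemma~\ref{prop:ddis}.

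\textbf{Step 1: realize $x^\star$ as a subproblem solution.} Set $Q_1:=\nabla^2 f(x^\star)$ and $q_1:=\nabla f(x^\star)-\nabla^2 f(x^\star)(x^\star-x_k)$, with base point $x=x_k$. The minimizer $d_1$ of $q_1^\top d+\frac12 d^\top Q_1 d+h(x_k+d)$ is characterized by $0\in q_1+Q_1 d_1+\partial h(x_k+d_1)$. Taking $d_1=x^\star-x_k$ reduces this to $0\in\nabla f(x^\star)+\partial h(x^\star)$, which holds. Strong convexity ($Q_1\succeq\mu I$) gives uniqueness, so $d_1=x^\star-x_k$.

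\textbf{Step 2: apply Lemma~\ref{prop:ddis}.} Take $(q_2,Q_2)=(g_k,H_k)$, so $d_2=d_k$ and $d_1-d_2=x^\star-x_{k+1}$. Lemma~\ref{prop:ddis} together with $\|v\|_{Q_1^{-1}}\le\|v\|/\sqrt{\mu}$ and $\|v\|_{Q_1}\ge\sqrt{\mu}\|v\|$ yields, after taking square roots,
\[
\mu\|x_{k+1}-x^\star\|\le \sqrt2\,\|q_1-g_k\|+\sqrt2\,\|(H_k-\nabla^2 f(x^\star))d_k\| .
\]
Since $d_k=x_{k+1}-x_k$, the definition of $\varrho_k$ in Assumption~\ref{as:ZOPN-DM} gives $\|(H_k-\nabla^2 f(x^\star))d_k\|=\varrho_k\|d_k\|$.

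For the first term we split
\[
q_1-g_k=\bigl[\nabla f(x^\star)-\nabla f(x_k)-\nabla^2 f(x^\star)(x^\star-x_k)\bigr]+\bigl[\nabla f(x_k)-g_k\bigr].
\]
The first bracket is at most $\frac{L_H}{2}\|x_k-x^\star\|^2$ by Lipschitz continuity of $\nabla^2 f$. The second is at most $\kappa_{\mathrm{eg}}\Delta_k$ by Assumption~\ref{as:ZOPN-g}.

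\textbf{Step 3: absorb and rearrange.} We use $\|d_k\|\le\|x_{k+1}-x^\star\|+\|x_k-x^\star\|$ and move the term $\sqrt2\varrho_k\|x_{k+1}-x^\star\|$ to the left. This is legitimate because $\varrho_k\to0$, so $\mu-\sqrt2\varrho_k>0$ for large $k$. Dividing by $\mu-\sqrt2\varrho_k$ produces the $\Delta_k$ term and the $\varrho_k\|x_k-x^\star\|$ term of \eqref{eq:localrate}. The quadratic term arises with coefficient $\frac{L_H}{2\mu}\bigl(1+\frac{\sqrt2\varrho_k}{\mu-\sqrt2\varrho_k}\bigr)$ once one writes $\frac{1}{\mu-\sqrt2\varrho_k}=\frac1\mu\bigl(1+\frac{\sqrt2\varrho_k}{\mu-\sqrt2\varrho_k}\bigr)$.

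The extra $L_H\|d_k\|\,\|x_k-x^\star\|$ contribution will be obtained from a variant of the same comparison. In that variant one accounts for the gap between $\nabla^2 f(x_k)$ and $\nabla^2 f(x^\star)$ along $d_k$, which is bounded by $L_H\|x_k-x^\star\|\|d_k\|$, in the same way as in the proof of Lemma~\ref{lem:unitstep}. Matching the exact constant bookkeeping to the displayed form of \eqref{eq:localrate} is the fiddliest part. I expect the main obstacle to be this constant matching, not the structure of the argument.

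\textbf{Step 4: R-superlinear rate.} Write $e_k=\|x_k-x^\star\|$. Step 3 gives $e_{k+1}\le a_k\Delta_k+b_ke_k$, where $a_k$ is bounded and $b_k\to0$. This uses $\|d_k\|\to0$, $e_k\to0$ and $\varrho_k\to0$, all available from the proof of Lemma~\ref{lem:unitstep}. Define $s_k:=e_k+\Delta_k$. Then
\[
s_{k+1}\le b_ke_k+\Bigl(a_k+\frac{\Delta_{k+1}}{\Delta_k}\Bigr)\Delta_k\le\Bigl(b_k+a_k+\frac{\Delta_{k+1}}{\Delta_k}\Bigr)s_k .
\]
This alone does not give a ratio tending to zero, because $a_k$ stays bounded. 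To fix this, introduce weights $s_k:=e_k+M_k\Delta_k$ with $M_k\to\infty$ chosen slowly enough that $M_{k+1}\Delta_{k+1}/\Delta_k\to0$; for example $M_k=(\Delta_{k-1}/\Delta_k)^{1/2}$. Then
\[
\frac{s_{k+1}}{s_k}\le b_k+\frac{a_k}{M_k}+\frac{M_{k+1}\Delta_{k+1}}{M_k\Delta_k}\to0 .
\]
Hence $s_k\to0$ Q-superlinearly, and since $e_k\le s_k$, $\{x_k\}$ converges R-superlinearly to $x^\star$.
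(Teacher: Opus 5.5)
\textbf{There is a gap in Step 3: your argument does not produce the displayed estimate \eqref{eq:localrate}.} Steps 1, 2 and 4 are sound. In Step 4, your growing weights $M_k$ are a valid substitute for the paper's device of pairing $\|x_{k+1}-x^\star\|$ with $\Delta_k$.

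Write $e_k:=\|x_k-x^\star\|$. Your chain actually gives
\[
e_{k+1}\le\frac{\sqrt2\,\kappa_{\mathrm{eg}}\Delta_k+\sqrt2\,\varrho_k e_k+\frac{\sqrt2 L_H}{2}e_k^2}{\mu-\sqrt2\varrho_k}.
\]
The reason is that the factor $\sqrt2$ from Lemma~\ref{prop:ddis} also multiplies the Taylor remainder $\frac{L_H}{2}e_k^2$ contained in $q_1-g_k$.

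The quadratic coefficient in \eqref{eq:localrate} is $\frac{L_H}{2\mu}\bigl(1+\frac{\sqrt2\varrho_k}{\mu-\sqrt2\varrho_k}\bigr)=\frac{L_H}{2(\mu-\sqrt2\varrho_k)}$. Yours is therefore larger by a factor $\sqrt2$; your rewriting of $1/(\mu-\sqrt2\varrho_k)$ silently drops it.

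The extra term $\frac{\sqrt2 L_H\|d_k\|}{\mu-\sqrt2\varrho_k}e_k$ in \eqref{eq:localrate} would absorb the excess only if $\|d_k\|\ge\frac{\sqrt2-1}{2\sqrt2}e_k$. Nothing in your argument supplies this: the gradient error can make $\|d_k\|$ much smaller than $e_k$ when $\kappa_{\mathrm{eg}}\Delta_k$ is of the order of $\mu e_k$.

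The variant you sketch does not repair this. Using $\nabla^2 f(x_k)$ in place of $\nabla^2 f(x^\star)$ creates the $L_H\|d_k\|e_k$ term, but the remainder still passes through the same $\sqrt2$. Your inequality does suffice for the R-superlinear conclusion, so only the displayed estimate is affected.

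\textbf{How the paper avoids the loss.} It compares with the exact proximal Newton step $d_k^{\mathrm{N}}$ (exact gradient and Hessian at $x_k$) rather than with $x^\star-x_k$. It writes $x_{k+1}-x^\star=(x_k+d_k^{\mathrm{N}}-x^\star)+(d_k-d_k^{\mathrm{N}})$. The first piece is bounded by $\frac{L_H}{2\mu}e_k^2$ via the quadratic convergence of proximal Newton \cite{PNM2014}, with no $\sqrt2$. Lemma~\ref{prop:ddis} is then applied only to $d_k-d_k^{\mathrm{N}}$, with $(q_1,Q_1)=(\nabla f(x_k),\nabla^2 f(x_k))$. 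There the bounds $\|(H_k-\nabla^2 f(x_k))d_k\|\le L_H\|d_k\|e_k+\varrho_k\|d_k-d_k^{\mathrm{N}}\|+\varrho_k\|d_k^{\mathrm{N}}\|$ and $\|d_k^{\mathrm{N}}\|\le e_k+\frac{L_H}{2\mu}e_k^2$ generate exactly the displayed terms, once $\varrho_k\|d_k-d_k^{\mathrm{N}}\|$ is absorbed.

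\textbf{A fix that keeps your comparison point.} Replace Lemma~\ref{prop:ddis} by monotonicity of $\partial h$, applied to $-g_k-H_kd_k\in\partial h(x_{k+1})$ and $-\nabla f(x^\star)\in\partial h(x^\star)$. Split $H_kd_k=\nabla^2 f(x^\star)(x_{k+1}-x^\star)+\nabla^2 f(x^\star)(x^\star-x_k)+(H_k-\nabla^2 f(x^\star))d_k$. This gives $\mu e_{k+1}\le\frac{L_H}{2}e_k^2+\kappa_{\mathrm{eg}}\Delta_k+\varrho_k\|d_k\|$, and hence
\[
e_{k+1}\le\frac{\kappa_{\mathrm{eg}}\Delta_k+\varrho_k e_k+\frac{L_H}{2}e_k^2}{\mu-\varrho_k}.
\]
Once $\sqrt2\varrho_k<\mu$, each of the three coefficients here is bounded by the corresponding one in \eqref{eq:localrate}, so this proves the displayed estimate. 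It also feeds your Step 4 unchanged. This route is shorter than the paper's, needs no external proximal Newton result, and yields a slightly sharper bound.
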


\begin{proof}
	Denote $d_k^{\mathrm{N}}$ as the search direction generated by the proximal Newton method with exact gradient $\nabla f(x_k)$ and Hessian $\nabla^2 f(x_k)$. Then, it follows from the Q-quadratic convergence of the proximal Newton method (cf. \cite[Theorem~3.4]{PNM2014}) that
	\begin{equation}\label{eq:local1}
		\|x_k + d_k^{\mathrm{N}} - x^{\star}\| \le \frac{L_H}{2\mu} \|x_k - x^{\star} \|^2.
	\end{equation}
	
	It follows from Lemma~\ref{lem:unitstep} that $x_{k+1}=x_k+d_k$ holds for sufficiently large $k$. By \eqref{eq:local1},
	\begin{equation}
		\|x_{k+1} - x^{\star}\|  = \|x_k + d_k - x^{\star} \|   \le \|x_k + d_k^{\mathrm{N}} - x^{\star} \| + \| d_k^{\mathrm{N}} - d_k\|   \le \frac{L_H}{2\mu} \|x_k - x^{\star} \|^2  + \| d_k^{\mathrm{N}} - d_k\|. \label{eq:local2}
	\end{equation}
	By Lemma~\ref{prop:ddis},
	\begin{equation*}
		\| d_k^{\mathrm{N}} - d_k \|_{\nabla^2 f(x_k)}^2   \le 2 \|\nabla f(x_k) - g_k\|_{\nabla^2 f(x_k)^{-1}}^2  + 2\|(H_k - \nabla^2 f(x_k))d_k \|_{\nabla^2 f(x_k)^{-1}}^2.
	\end{equation*}
	This, together with Assumption~\ref{as:problem2} and the fact that $a^2+b^2 \le (a+b)^2$ for any $a,b\ge 0$, yields
	\begin{equation}\label{eq:local3}
		\| d_k^{\mathrm{N}} - d_k \| \le \frac{\sqrt{2}}{\mu}\left(\|\nabla f(x_k) - g_k\| + \|(H_k - \nabla^2 f(x_k))d_k \| \right) .
	\end{equation}
	It follows from Assumptions~\ref{as:problem2} and \ref{as:ZOPN-DM} that
		\begin{align}\label{eq:local4}
			\|(H_k - \nabla^2 f(x_k))d_k \| & \le \|(\nabla^2 f(x^{\star}) - \nabla^2 f(x_k))d_k \| + \|(H_k - \nabla^2 f(x^{\star}))d_k \|  \nonumber \\
			& \le L_H \|d_k\| \|x_k - x^{\star}\| + \varrho_k \|d_k\| \nonumber \\
			& \le L_H \|d_k\| \|x_k - x^{\star}\| + \varrho_k \|d_k - d_k^{\mathrm{N}}\| + \varrho_k \|d_k^{\mathrm{N}}\|.
		\end{align}
	By \eqref{eq:local1},
	\begin{equation}
		\|d_k^{\mathrm{N}}\| \le \|x_k + d_k^{\mathrm{N}} - x^{\star} \| + \|x_k - x^{\star}\|   \le \frac{L_H}{2\mu} \|x_k - x^{\star} \|^2 + \|x_k - x^{\star}\|. \label{eq:local5}
	\end{equation}
	Since $\varrho_k\to 0$, $\varrho_k < \mu/\sqrt{2}$ holds for sufficiently large $k$.
	Substituting \eqref{eq:local4} and \eqref{eq:local5} into \eqref{eq:local3} yields
	\begin{equation}
		\begin{aligned}
			\| d_k^{\mathrm{N}} - d_k \| & \le \frac{\sqrt{2}}{\mu - \sqrt{2}\varrho_k}\|\nabla f(x_k) - g_k\| + \frac{\sqrt{2}}{\mu - \sqrt{2}\varrho_k}(L_H\|d_k\|+\varrho_k)\|x_k - x^{\star}\|   \\
			&\hspace{1.2em} + \frac{L_H}{2\mu}\frac{\sqrt{2}\varrho_k}{\mu-\sqrt{2}\varrho_k}\|x_k - x^{\star}\|^2. \label{eq:local6}
		\end{aligned}
	\end{equation}
	Combining \eqref{eq:local6}, \eqref{eq:local2} and Assumption~\ref{as:ZOPN-g}, we obtain \eqref{eq:localrate}.
	
	Note that both $\|d_k\|$ and $\varrho_k$ decay to zero as $k\to \infty$ (cf. Assumption~\ref{as:ZOPN-DM} and Lemma~\ref{thm:globalconv}), \eqref{eq:localrate} implies
	\begin{equation}\label{eq:xdelta}
		\|x_{k+1} - x^{\star} \| \le \frac{\sqrt{2}\kappa_{\mathrm{eg}}}{\mu - \sqrt{2}\varrho_k}\Delta_k  +o(\|x_k - x^{\star} \|).
	\end{equation}
	Since $\{\Delta_k\}$ converges Q-superlinearly to 0, for any $\epsilon>0$, there exists  $K_1$ such that $\Delta_k/\Delta_{k-1}\le \epsilon/2 $ for all $k\ge K_1$.
	Meanwhile, there exists $K_2$ such that $o(\|x_k - x^{\star} \|)/\|x_k - x^{\star} \|\le \epsilon$ for all $k\ge K_2$. Hence, for any $k\ge \max\{K_1,K_2\}$, it follows from \eqref{eq:xdelta} that
	\begin{equation*}
		\begin{aligned}
			\|x_{k+1} - x^{\star} \| + \frac{\sqrt{2}\kappa_{\mathrm{eg}}}{\mu - \sqrt{2}\varrho_k}\Delta_k  &\le \frac{2\sqrt{2}\kappa_{\mathrm{eg}}}{\mu - \sqrt{2}\varrho_k}\Delta_k + o(\|x_k - x^{\star} \|) \\ &\le  \epsilon\left(\|x_k - x^{\star} \| + \frac{\sqrt{2}\kappa_{\mathrm{eg}}}{\mu - \sqrt{2}\varrho_k}\Delta_{k-1}\right).
		\end{aligned}
	\end{equation*}
	This implies that $\left\{\|x_{k+1} - x^{\star} \| + \frac{\sqrt{2}\kappa_{\mathrm{eg}}}{\mu - \sqrt{2}\varrho_k}\Delta_k\right\}$ is Q-superlinearly convergent to 0. Thus $\{\|x_{k+1} - x^{\star} \|\}$ is R-superlinearly convergent to 0. Therefore $\{x_k\}$
	converges R-superlinearly  to $x^{\star}$.
\end{proof}

From Theorem~\ref{thm:localrate}, $\{x_k\}$ converges locally R-superlinearly to $x^{\star}$ whenever the sampling radius $\Delta_k$ converges Q-superlinearly to 0.
A typical choice is $\Delta_k=q^{2^k}$ for $q\in(0,1)$.
Nevertheless, Q-superlinear convergence of $\{x_k\}$ cannot be guaranteed.

\subsection{Hessian approximations}\label{subsec:Hessian}

We impose two assumptions on the approximate Hessian in Assumptions~\ref{as:ZOPN-H} and \ref{as:ZOPN-DM}. In this section, we discuss two widely used Hessian approximation schemes and verify whether they satisfy Assumptions~\ref{as:ZOPN-H} and \ref{as:ZOPN-DM}.

\paragraph{\textbf{BFGS}}
As one of the most popular Hessian approximations, BFGS approximates the Hessian via
\begin{equation}\label{eq:bfgs}
	H_k = H_{k-1} + \frac{y_{k-1}y_{k-1}^{\top}}{y_{k-1}^{\top}s_{k-1}} - \frac{H_{k-1}s_{k-1}(H_{k-1}s_{k-1})^{\top}}{s_{k-1}^{\top}H_{k-1}s_{k-1}},
\end{equation}
where $s_{k-1}=x_k - x_{k-1}$, $y_{k-1}=g_k - g_{k-1}$.
A notable advantage of the BFGS update is that it does not require additional function evaluations to construct the Hessian approximation.

The following result given in \cite{Byrd1989} is a fundamental tool for our analysis of \eqref{eq:bfgs}.

\begin{lemma}[{\cite[Theorem~3.2]{Byrd1989}}]\label{lem:dmcondition}
	Let $H_k$ be generated by the BFGS formula \eqref{eq:bfgs}, where $H_0\in\mathbb{S}_{++}^n$, and where $y_k^{\top}s_k>0$ for all $k\in\mathbb{N}$. Furthermore, assume that $\{s_k\}$ and $\{y_k\}$ are such that
	\begin{equation*}
		\frac{\|y_k-G^{\ast}s_k\|}{\|s_k\|} \le \epsilon_k,
	\end{equation*}
	for some $G^{\ast}\in\mathbb{S}_{++}^n$ and for some sequence $\{\epsilon_k\}$ satisfying $\sum_{k=0}^{\infty}\epsilon_k < \infty$. Then,
	\begin{equation*}
		\lim_{k\to\infty} \frac{\|(H_k-G^{\ast})s_k\|}{\|s_k\|} = 0.
	\end{equation*}
\end{lemma}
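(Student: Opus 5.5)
The plan is to follow the trace/log-determinant potential argument of Byrd and Nocedal, after a change of variables that turns $G^{\ast}$ into the identity. Set $\tilde s_k = (G^{\ast})^{1/2}s_k$, $\tilde y_k = (G^{\ast})^{-1/2}y_k$ and $\tilde H_k = (G^{\ast})^{-1/2}H_k(G^{\ast})^{-1/2}$. The BFGS formula \eqref{eq:bfgs} is invariant under this congruence, so $\{\tilde H_k\}$ is again a BFGS sequence driven by $(\tilde s_k,\tilde y_k)$, and $\tilde y_k^{\top}\tilde s_k = y_k^{\top}s_k>0$. The hypothesis becomes $\|\tilde y_k-\tilde s_k\|\le c\,\epsilon_k\|\tilde s_k\|$, where $c$ depends only on the condition number of $G^{\ast}$. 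The target $\|(H_k-G^{\ast})s_k\|/\|s_k\|\to 0$ is equivalent, up to the same constants, to $\|(\tilde H_k-I)\tilde s_k\|/\|\tilde s_k\|\to0$. From now on I drop the tildes and take $G^{\ast}=I$.

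Introduce $\psi(B)=\mathrm{tr}(B)-\ln\det(B)$, which satisfies $\psi(B)\ge n>0$ on $\mathbb{S}^n_{++}$. Define $\cos\theta_k = s_k^{\top}H_ks_k/(\|s_k\|\|H_ks_k\|)$ and $q_k = s_k^{\top}H_ks_k/\|s_k\|^2$. The standard trace and determinant formulas for a BFGS update give
\begin{equation*}
\psi(H_{k+1}) = \psi(H_k) + \Big(\frac{\|y_k\|^2}{y_k^{\top}s_k}-1-\ln\frac{y_k^{\top}s_k}{\|s_k\|^2}\Big) + \ln\cos^2\theta_k + \Big(1-\frac{q_k}{\cos^2\theta_k}+\ln\frac{q_k}{\cos^2\theta_k}\Big).
\end{equation*}
The last two terms are both nonpositive, because $\ln\cos^2\theta_k\le0$ and $1-u+\ln u\le0$ for $u>0$.

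The key step is to bound the first bracket by $C\epsilon_k$ for all large $k$. Since $\epsilon_k\to0$, eventually $c\epsilon_k<1/2$. Writing $y_k=s_k+e_k$ with $\|e_k\|\le c\epsilon_k\|s_k\|$, I get $y_k^{\top}s_k\ge(1-c\epsilon_k)\|s_k\|^2$ and $\|y_k\|\le(1+c\epsilon_k)\|s_k\|$. Hence $\|y_k\|^2/y_k^{\top}s_k\le 1+O(\epsilon_k)$ and $-\ln(y_k^{\top}s_k/\|s_k\|^2)\le O(\epsilon_k)$. Telescoping from some $k_0$ and using $\psi\ge 0$ and $\sum\epsilon_k<\infty$ gives
\begin{equation*}
\sum_{k\ge k_0}\Big[-\ln\cos^2\theta_k - \Big(1-\frac{q_k}{\cos^2\theta_k}+\ln\frac{q_k}{\cos^2\theta_k}\Big)\Big] \le \psi(H_{k_0})+C\sum_k\epsilon_k<\infty.
\end{equation*}
Since every summand is nonnegative, both pieces tend to $0$. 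This gives $\cos\theta_k\to1$, and, because $u\mapsto u-1-\ln u$ vanishes only at $u=1$ and is coercive at $0$ and $\infty$, also $q_k/\cos^2\theta_k\to1$. Together these yield $q_k\to1$.

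Finally I would expand
\begin{equation*}
\frac{\|(H_k-I)s_k\|^2}{\|s_k\|^2} = \frac{\|H_ks_k\|^2}{\|s_k\|^2}-2q_k+1 = \frac{q_k^2}{\cos^2\theta_k}-2q_k+1 \to 1-2+1=0,
\end{equation*}
and then undo the change of variables. I expect the main obstacle to be the bookkeeping in the first bracket. The perturbation estimates only hold once $\epsilon_k$ is small, so the telescoping has to start at a finite $k_0$; this is harmless because $\psi(H_{k_0})$ is finite. One also needs $\epsilon_k$ to be summable rather than merely tending to zero, which is exactly what the hypothesis provides. The congruence invariance of BFGS and the $\psi$-identity are standard algebra and I would simply verify them.
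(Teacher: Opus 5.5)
Your proposal is correct and follows essentially the same route as the source: the paper gives no proof of its own but cites Byrd--Nocedal's Theorem~3.2, whose proof uses the same steps — the congruence reduction to $G^{\ast}=I$, the potential $\psi(B)=\mathrm{tr}(B)-\ln\det(B)$ with the four-term BFGS update identity, an $O(\epsilon_k)$ bound on $\|y_k\|^2/y_k^{\top}s_k-1-\ln(y_k^{\top}s_k/\|s_k\|^2)$, telescoping to get $\cos\theta_k\to1$ and $q_k\to1$, and the expansion $q_k^2/\cos^2\theta_k-2q_k+1\to0$. One trivial correction: the constant $c$ in the transformed hypothesis is $\|(G^{\ast})^{-1}\|$, which depends on the scale of $G^{\ast}$ and not only on its condition number, but this changes nothing in the argument.
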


\begin{theorem}\label{thm:bfgs}
	Suppose that Assumptions~\ref{as:problem1}--\ref{as:ZOPN-H} hold.
	For the BFGS update \eqref{eq:bfgs}, suppose the sampling radius satisfies
	\begin{equation}\label{eq:radiusbfgs}
		\Delta_k \le \min\left\lbrace \Delta_{k-1},  ( \tilde \tau/2  )^{k/2},  \frac{\mu\underline{t}}{4\kappa_{\mathrm{eg}}}\|d_k\| ,C\|d_k\|^3\right\rbrace,
	\end{equation}
	where $\tilde \tau\in(0,1)$ is given in Lemma~\ref{thm:linearconv}, $\underline{t}>0$ is defined in Lemma~\ref{lem:stepsizeuppbd}, and $C>0$ is a constant independent of $k$.
	Then $\{H_k\}$ satisfies the Dennis--Mor\'e condition \eqref{eq:DM}.
\end{theorem}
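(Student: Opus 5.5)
We verify the hypotheses of Lemma~\ref{lem:dmcondition} with $G^{\ast}=\nabla^2 f(x^{\star})$, which lies in $\mathbb{S}_{++}^n$ because $\nabla^2 f\succeq \mu I$. Two things must be checked. First, $y_k^{\top}s_k>0$ for all $k$. Second, $\epsilon_k:=\|y_k-G^{\ast}s_k\|/\|s_k\|$ is summable. Here $s_k=t_kd_k$ and $y_k=g_{k+1}-g_k$.

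\textbf{Decomposition of $y_k$.} Write
\begin{equation*}
y_k = \big(\nabla f(x_{k+1})-\nabla f(x_k)\big) + e_{k+1} - e_k, \qquad e_j:=g_j-\nabla f(x_j).
\end{equation*}
By Assumption~\ref{as:ZOPN-g}, $\|e_{k+1}-e_k\|\le \kappa_{\mathrm{eg}}(\Delta_{k+1}+\Delta_k)\le 2\kappa_{\mathrm{eg}}\Delta_k$, using the monotonicity $\Delta_{k+1}\le\Delta_k$ from \eqref{eq:radiusbfgs}.

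\textbf{Positivity of $y_k^{\top}s_k$.} Since $\nabla^2 f\succeq\mu I$, the exact-gradient part contributes at least $\mu\|s_k\|^2$. The error part is bounded by $2\kappa_{\mathrm{eg}}\Delta_k\|s_k\|$. The condition $\Delta_k\le \frac{\mu\underline t}{4\kappa_{\mathrm{eg}}}\|d_k\|$ together with $t_k\ge\underline t$ gives $2\kappa_{\mathrm{eg}}\Delta_k\le \frac{\mu}{2}\underline t\|d_k\|\le\frac{\mu}{2}\|s_k\|$. Hence $y_k^{\top}s_k\ge \frac{\mu}{2}\|s_k\|^2>0$. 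This also shows the curvature condition holds at every iteration, so the BFGS update stays well defined and positive definite.

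\textbf{Summability of $\epsilon_k$.} By the mean value theorem and the Lipschitz continuity of $\nabla^2 f$,
\begin{equation*}
\|\nabla f(x_{k+1})-\nabla f(x_k)-G^{\ast}s_k\|\le L_H\max\{\|x_k-x^{\star}\|,\|x_{k+1}-x^{\star}\|\}\,\|s_k\|,
\end{equation*}
so
\begin{equation*}
\epsilon_k\le L_H\big(\|x_k-x^{\star}\|+\|x_{k+1}-x^{\star}\|\big)+\frac{2\kappa_{\mathrm{eg}}\Delta_k}{\|s_k\|}.
\end{equation*}
The first term is summable by R-linear convergence. Assumption~\ref{as:problem2} implies Assumption~\ref{ass:3}, and $\Delta_k^2\le(\tilde\tau/2)^k$ gives $\sum_k\Delta_k^2\tilde\tau^{-k}\le\sum_k 2^{-k}<\infty$, i.e.\ condition \eqref{equ:scmudecay}. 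Then \eqref{eq:xlinear} shows that $\|x_k-x^{\star}\|\le C'\tilde\tau^{k/2}$, which is summable.

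For the second term, $\|s_k\|\ge\underline t\|d_k\|$ and $\Delta_k\le C\|d_k\|^3$ give $\Delta_k/\|s_k\|\le (C/\underline t)\|d_k\|^2$. So it remains to show $\sum_k\|d_k\|^2<\infty$. By Lemma~\ref{thm:globalconv}, this follows from $\sum_k\Delta_k^2<\infty$, which holds by the geometric bound. Alternatively, one can use \eqref{equ:thmlinear3}, which bounds $\|d_k\|^2$ by a quantity that decays geometrically.

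\textbf{Main obstacle.} The delicate step is controlling the gradient-error ratio $\Delta_k/\|s_k\|$, because $\|s_k\|\to0$ and the ratio could in principle blow up. The coupling $\Delta_k\le C\|d_k\|^3$ is exactly what converts it into a summable quantity. The other delicate point is handling $e_{k+1}$, which is tied to $\Delta_{k+1}$ rather than $\Delta_k$; the monotonicity $\Delta_{k+1}\le\Delta_k$ is what lets us absorb it. With both hypotheses of Lemma~\ref{lem:dmcondition} verified, the lemma yields \eqref{eq:DM}.
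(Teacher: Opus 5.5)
Your proposal is correct and follows essentially the same route as the paper: you show $y_k^{\top}s_k\ge\frac{\mu}{2}\|s_k\|^2$ via the bound $\Delta_k\le\frac{\mu\underline{t}}{4\kappa_{\mathrm{eg}}}\|d_k\|$ together with $\Delta_{k+1}\le\Delta_k$, bound $\|y_k-\nabla^2 f(x^{\star})s_k\|/\|s_k\|$ by $L_H$ times the distances of $x_k,x_{k+1}$ to $x^{\star}$ plus $\frac{2C\kappa_{\mathrm{eg}}}{\underline{t}}\|d_k\|^2$, and get summability from Lemmas~\ref{thm:globalconv} and \ref{thm:linearconv} before invoking Lemma~\ref{lem:dmcondition}. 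You are in fact slightly more explicit than the paper about why Assumption~\ref{ass:3} holds and where the monotonicity of $\{\Delta_k\}$ is used.
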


\begin{proof}
	As $f$ is $\mu$-strongly convex, it follows from Assumption~\ref{as:ZOPN-g} and \eqref{eq:radiusbfgs} that
		\begin{align}\label{eq:curvpos}
			y_k^{\top}s_k & = (g_{k+1}-g_k)^{\top}s_k  \nonumber \\
			& = (\nabla f(x_{k+1})- \nabla f(x_k))^{\top}s_k + (g_{k+1} - \nabla f(x_{k+1}))^{\top}s_k + (\nabla f(x_k) - g_k)^{\top}s_k  \nonumber \\
			& \ge \mu\|s_k\|^2 - (\|g_{k+1} - \nabla f(x_{k+1})\| + \|\nabla f(x_k) - g_k\|)\|s_k\|  \nonumber \\
			& \ge \|s_k\|(\mu\|s_k\|-2\kappa_{\mathrm{eg}}\Delta_k) \ge \|s_k\| \left( \mu\|s_k\|-2\kappa_{\mathrm{eg}}\frac{\mu t_k}{4\kappa_{\mathrm{eg}}}\|d_k\| \right) \ge \frac{\mu}{2}\|s_k\|^2 > 0.
		\end{align}
	
	Furthermore, by Assumptions~\ref{as:problem2}, \ref{as:ZOPN-g} and \eqref{eq:radiusbfgs},
	\begin{equation*}
		\begin{aligned}
			& \quad \frac{\|y_k-\nabla^2 f(x^{\star})s_k\|}{\|s_k\|} = \frac{\|g_{k+1} - g_k - \nabla^2 f(x^{\star})s_k\|}{\|s_k\|} \\
			&\le \frac{\|\nabla f(x_{k+1}) - \nabla f(x_k)- \nabla^2 f(x^{\star})s_k\|}{\|s_k\|}  + \frac{\| g_{k+1} - \nabla f(x_{k+1})\|}{\|s_k\|} + \frac{\| \nabla f(x_k) - g_k\|}{\|s_k\|} \\
			& \le \left\| \int_0^1 \nabla^2 f(x_k + \xi s_k)s_k \mathrm{d}\xi - \nabla^2 f(x^{\star})s_k\right\| \Big/ \|s_k\| + \frac{2\kappa_{\mathrm{eg}}\Delta_k}{\|s_k\|} \\
			& \le L_H\max\{\|x_{k+1} - x^{\star}\|, \|x_k - x^{\star}\|\} + \frac{2C\kappa_{\mathrm{eg}}}{\underline{t}}\|d_k\|^2.
		\end{aligned}
	\end{equation*}
	
	By virtue of \eqref{eq:radiusbfgs}, we obtain
	\begin{equation*}
		\sum_{k=0}^{\infty}\Delta_k^2 < \infty  \text{ and }   \sum_{k=0}^{\infty} \Delta_k^2 \tilde \tau^{-k} < \infty.
	\end{equation*}
	Thus, it follows from Lemmas~\ref{thm:globalconv} and \ref{thm:linearconv} that
	\begin{equation*}
		\sum_{k=0}^{\infty} \|d_k\|^2 < \infty  \text{ and }  \sum_{k=0}^{\infty} \|x_k - x^{\star} \| < \infty,
	\end{equation*}
	which implies
	\begin{equation*}
		\sum_{k=0}^{\infty} \frac{\|y_k-\nabla^2 f(x^{\star})s_k\|}{\|s_k\|} < \infty.
	\end{equation*}
	All the hypotheses of Lemma~\ref{lem:dmcondition} are satisfied. Therefore, we have
	\begin{equation*}
		\lim_{k\to\infty} \frac{\|(H_k- \nabla^2 f(x^{\star}))s_k\|}{\|s_k\|} = 0,
	\end{equation*}
	i.e.,  $\{H_k\}$ satisfies the Dennis--Mor\'e condition \eqref{eq:DM}.
\end{proof}

In Theorem~\ref{thm:bfgs}, we directly assume that the eigenvalues of $\{H_k\}$ are bounded. In fact, it is shown in \cite[Online supplement]{ZOQN2023} that Assumption~\ref{as:ZOPN-H} holds algorithmically for the BFGS update~\eqref{eq:bfgs} due to \eqref{eq:curvpos}. In addition, the sampling radius condition \eqref{eq:radiusbfgs} seems unreasonable at first glance, as it involves $\|d_k\|$, whereas $d_k$ is determined after $\Delta_k$ in Algorithm~\ref{ZOPN}. In fact, \eqref{eq:radiusbfgs} serves primarily as a theoretical condition and cannot be directly enforced in the algorithmic loop.
Furthermore, since $\|d_k\|$ is commonly adopted as the stopping criterion of Algorithm~\ref{ZOPN}, one can ensure \eqref{eq:radiusbfgs} holds in practical implementation. Given a prescribed tolerance $\varepsilon>0$, initializing $\Delta_0\le \varepsilon^3$ guarantees the bound $\Delta_k\le C\|d_k\|^3$. This is because $\|d_k\|$ never falls below $\varepsilon$ before the algorithm terminates.

\paragraph{\textbf{Lazy Hessian}}
Another popular Hessian approximation based on zeroth-order oracle is the lazy Hessian update (cf. \cite{ZOCNM2023})
\begin{equation}\label{eq:lazyH}
	[H_k]_{ij} = \frac{f(x_k+\Delta_k e_i + \Delta_k e_j) - f(x_k+\Delta_k e_i) - f(x_k+\Delta_k e_j) + f(x_k)}{\Delta_k^2},
\end{equation}
where $i,j\in[n]$. The Lazy Hessian scheme updates the approximate Hessian via \eqref{eq:lazyH} every $n$ iterations to reduce the high computational cost of function evaluations. Accordingly, the average number of function evaluations per iteration is $\Theta(n)$.

\begin{theorem}\label{thm:lazyH}
	Suppose that Assumptions~\ref{as:problem1}--\ref{as:ZOPN-g} hold. If the sampling radius satisfies
	\begin{equation}\label{eq:radiuslazyH}
		\Delta_k \le \frac{3\mu}{2(\sqrt{2}+1)nL_H} \quad \text{ and } \quad
		\sum_{k=0}^{\infty}\Delta_k^2<\infty,
	\end{equation}
	then the lazy Hessian update \eqref{eq:lazyH} satisfies Assumptions~\ref{as:ZOPN-H} and \ref{as:ZOPN-DM}.
\end{theorem}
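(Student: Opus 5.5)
The argument rests on an entrywise error bound for the finite-difference Hessian \eqref{eq:lazyH} against the true Hessian at the point where it was last recomputed. Let $\pi(k)\le k$ be the most recent iteration at which the lazy update was performed, so $H_k=H_{\pi(k)}$ and $k-\pi(k)<n$.

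\textbf{Step 1 (entrywise error).} Write the numerator of \eqref{eq:lazyH} as a second-order mixed difference. By Taylor expansion with the $L_H$-Lipschitz Hessian (Assumption~\ref{as:problem2}), each of $f(x+\Delta e_i+\Delta e_j)$, $f(x+\Delta e_i)$ and $f(x+\Delta e_j)$ equals its quadratic model at $x$ up to a cubic remainder. The remainders are bounded by $\frac{L_H}{6}\|\Delta(e_i+e_j)\|^3\le \frac{L_H}{6}(\sqrt2\Delta)^3$ and $\frac{L_H}{6}\Delta^3$ (twice). The quadratic parts cancel to give exactly $\Delta^2[\nabla^2 f(x)]_{ij}$. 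This yields
\begin{equation*}
|[H_k]_{ij}-[\nabla^2 f(x_{\pi(k)})]_{ij}|\le \frac{(2\sqrt2+2)L_H}{6}\Delta_{\pi(k)} = \frac{(\sqrt2+1)L_H}{3}\Delta_{\pi(k)} .
\end{equation*}
Passing from entries to the spectral norm through the Frobenius norm multiplies this by $n$:
\begin{equation*}
\|H_k-\nabla^2 f(x_{\pi(k)})\|\le \tfrac{(\sqrt2+1)nL_H}{3}\Delta_{\pi(k)}.
\end{equation*}
Note that $H_k$ is symmetric by construction.

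\textbf{Step 2 (Assumption~\ref{as:ZOPN-H}).} Under the first condition in \eqref{eq:radiuslazyH}, the error bound from Step 1 is at most $\mu/2$. Combined with $\mu I\preceq\nabla^2 f\preceq L_f I$, Weyl's inequality gives
\begin{equation*}
\tfrac{\mu}{2}I\preceq H_k\preceq (L_f+\tfrac{\mu}{2})I .
\end{equation*}
So we may take $\underline\kappa=\mu/2$ and $\bar\kappa=L_f+\mu/2$. The constant $\frac{3\mu}{2(\sqrt2+1)nL_H}$ is calibrated precisely for this step.

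\textbf{Step 3 (Dennis--Mor\'e).} We bound $\varrho_k\le\|H_k-\nabla^2 f(x^\star)\|$ and split it as
\begin{equation*}
\|H_k-\nabla^2 f(x^\star)\|\le \tfrac{(\sqrt2+1)nL_H}{3}\Delta_{\pi(k)} + L_H\|x_{\pi(k)}-x^\star\|.
\end{equation*}
Both terms must be shown to tend to zero. For the first, square-summability of $\{\Delta_k\}$ gives $\Delta_k\to0$. Since $\pi(k)\ge k-n+1\to\infty$, it follows that $\Delta_{\pi(k)}\to0$.

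The second term requires $x_k\to x^\star$, which I expect to be the main subtlety. Step 2 has already established Assumption~\ref{as:ZOPN-H}, so Lemma~\ref{thm:globalconv} applies and square-summability of $\Delta_k$ gives $\|d_k\|\to0$. Lemma~\ref{lem:station} then gives $\mathrm{dist}(0,\partial F(x_k+d_k))\to0$.

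Strong convexity of $F$ (from $\nabla^2 f\succeq\mu I$ and $h$ convex) converts this into convergence of iterates. For any $q\in\partial F(y)$ we have $\mu\|y-x^\star\|\le\|q\|$, so $\|x_k+d_k-x^\star\|\le \mathrm{dist}(0,\partial F(x_k+d_k))/\mu\to0$. Hence $x_k\to x^\star$, and therefore $x_{\pi(k)}\to x^\star$. Together these give $\varrho_k\to0$.

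There is one circularity to avoid. Invoking Lemma~\ref{thm:globalconv} requires Assumption~\ref{as:ZOPN-H}, so the eigenvalue bounds of Step 2 must be proved first, and Step 2 uses only the first condition in \eqref{eq:radiuslazyH}.
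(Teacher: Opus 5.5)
Your architecture is the same as the paper's. You bound $\|H_k-\nabla^2 f(x_{\pi(k)})\|$, get eigenvalue bounds from $\nabla^2 f\succeq\mu I$, and split $\|H_k-\nabla^2 f(x^\star)\|$ through $\nabla^2 f(x_{\pi(k)})$, using Lemmas~\ref{thm:globalconv} and \ref{lem:station} for $x_k\to x^\star$. Steps~2 and~3 are fine; your strong-monotonicity step and the remark on the order of the argument spell out what the paper leaves implicit.

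The difference is that the paper quotes the error bound from \cite[Lemma~6]{ZOCNM2023}, while you derive it, and the derivation fails on the diagonal. For $i=j$ the first point in \eqref{eq:lazyH} is $x+2\Delta e_i$. So $\|\Delta(e_i+e_j)\|=2\Delta$, not $\sqrt2\Delta$, and your Taylor remainders only give $|[H_k-\nabla^2 f(x)]_{ii}|\le\frac{L_H}{6}(8+2)\Delta=\frac53 L_H\Delta$.

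The constant $\frac{\sqrt2+1}{3}$ is actually false there. Take $n=1$, $h\equiv 0$, and $f''(t)=\mu+L_H\min\{\max\{t,0\},M\}$, which satisfies Assumptions~\ref{as:problem1} and \ref{as:problem2}. At $x=0$ with $2\Delta\le M$, one gets $H-f''(0)=L_H\Delta>\frac{\sqrt2+1}{3}L_H\Delta$. Hence your entrywise bound, the spectral bound $\frac{(\sqrt2+1)nL_H}{3}\Delta_{\pi(k)}$, and the claim $\underline\kappa=\mu/2$ are not established. Swapping in the crude constant does not repair Step~2 either: for $n=1$, \eqref{eq:radiuslazyH} only gives $\frac53 L_H\Delta\le\frac{5(\sqrt2-1)}{2}\mu$, which exceeds $\mu$.

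The fix is the exact identity, valid for all $i,j$ including $i=j$:
\begin{equation*}
f(x+\Delta e_i+\Delta e_j)-f(x+\Delta e_i)-f(x+\Delta e_j)+f(x)=\int_0^{\Delta}\!\!\int_0^{\Delta}[\nabla^2 f(x+se_i+te_j)]_{ij}\,\mathrm{d}s\,\mathrm{d}t .
\end{equation*}
Combine it with $|[\nabla^2 f(x+se_i+te_j)-\nabla^2 f(x)]_{ij}|\le L_H(s+t)$. This gives $|[H_k-\nabla^2 f(x_{\pi(k)})]_{ij}|\le L_H\Delta_{\pi(k)}$ for every entry. Hence
\begin{equation*}
\|H_k-\nabla^2 f(x_{\pi(k)})\|\le nL_H\Delta_{\pi(k)}\le\tfrac{3(\sqrt2-1)}{2}\mu
\end{equation*}
under \eqref{eq:radiuslazyH}. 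Assumption~\ref{as:ZOPN-H} then holds with $\underline\kappa=\frac{5-3\sqrt2}{2}\mu$ and $\bar\kappa=L_f+\frac{3(\sqrt2-1)}{2}\mu$. Your Step~3 goes through unchanged, since it only needs this error to be $O(\Delta_{\pi(k)})$.

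The same $n=1$ example shows that the constant as quoted in the paper cannot hold for $n=1$ either. So $nL_H\Delta$ is the safe bound to use rather than trying to reproduce $\frac{\sqrt2+1}{3}$.
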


\begin{proof}
	For any $k\in \mathbb{N}$, we have
	$	k = n \left \lfloor \frac{k}n \right \rfloor + k - n \left \lfloor \frac{k}n \right \rfloor $.
	Since the lazy Hessian scheme updates the approximate Hessian via \eqref{eq:lazyH} every $n$ iterations, we have
	$	H_k \equiv H_{n \left \lfloor \frac{k}n \right \rfloor}$ for all $ k\in\mathbb{N} $.
	
	It follows from \cite[Lemma~6]{ZOCNM2023} that
	\begin{equation*}
		\left\| H_k - \nabla^2 f \left( x_{n \left \lfloor \frac{k}n \right \rfloor} \right)  \right\|  = \left\| H_{n \left \lfloor \frac{k}n \right \rfloor} - \nabla^2 f \left( x_{n \left \lfloor \frac{k}n \right \rfloor} \right)  \right\|  \le \frac{(\sqrt{2}+1)nL_H}{3}\Delta_{n \left \lfloor \frac{k}n \right \rfloor}.
	\end{equation*}
	Since $\nabla^2 f \succeq \mu I$, Assumption~\ref{as:ZOPN-H} is readily satisfied under the sampling radius condition \eqref{eq:radiuslazyH}.
	
	Similarly, we have
	\begin{equation*}
		\begin{aligned}
			\left\| H_k - \nabla^2 f(x^{\star}) \right\| & = \left\| H_{n \left \lfloor \frac{k}n \right \rfloor} - \nabla^2 f (x^{\star}) \right\| \\
			& \le \left\| H_{n \left \lfloor \frac{k}n \right \rfloor} - \nabla^2 f \left( x_{n \left \lfloor \frac{k}n \right \rfloor} \right)  \right\| + \left\| \nabla^2 f \left( x_{n \left \lfloor \frac{k}n \right \rfloor} \right) - \nabla^2 f(x^{\star})  \right\| \\
			& \le \frac{(\sqrt{2}+1)nL_H}{3}\Delta_{n \left \lfloor \frac{k}n \right \rfloor} + L_H \left\|  x_{n \left \lfloor \frac{k}n \right \rfloor} - x^{\star} \right\| .
		\end{aligned}
	\end{equation*}
	This, together with Lemma~\ref{lem:station} and  Lemma~\ref{thm:globalconv}, implies that
	\begin{equation*}
		\frac{\|(H_k - \nabla^2 f(x^{\star}))(x_{k+1}-x_k)\|}{\|x_{k+1} - x_k\|} \le \| H_k - \nabla^2 f(x^{\star})\| \to 0 \quad \text{ as }\ k\to \infty,
	\end{equation*}
	which verifies Assumption~\ref{as:ZOPN-DM}.
\end{proof}

\section{Numerical Experiments}\label{sec:experiment}

In this section, we test Algorithm \ref{ZOPN} (ZOPN) and compare it with four zeroth-order proximal gradient methods:
SS-ProxGD (cf. \cite{ZOPG2023,UniZProxSG2020,GE2022}) utilizing a two-point spherical smoothing to estimate the gradient,
GS-ProxGD (cf. \cite{ZOPG2023,GE2022}) which estimates the gradient via two-point Gaussian smoothing,
DGS-ProxGD (cf. \cite{DSZProxSG2021,ZOPG2023}) employing a double Gaussian smoothing
\begin{equation*}
	\nabla^{\mathrm{DGS}}_{\Delta_a,\Delta_b}f(x):=\frac{ (f(x+\Delta_a u_1+\Delta_b u_2)-f(x+\Delta_a u_1)  )u_2 }{\Delta_b},
\end{equation*}
where $u_1, u_2\sim \mathcal{N}(0,I)$ and $\Delta_a, \Delta_b>0$ are sampling radii,
and FD-ProxGD which approximates the gradient via the forward difference.
The experiments are implemented on a 64-bit laptop with an Intel Core i7-13700H 2.4GHz CPU and 16GB RAM, using MATLAB R2023a.

In Algorithm \ref{ZOPN}, we use the standard forward difference to estimate the gradient, and the approximate Hessian $H_k$ is updated by \eqref{eq:bfgs} if
\begin{equation}\label{equ:BFGSbdd}
	y_{k-1}^{\top}s_{k-1} \ge 10^{-9}  \|s_{k-1} \|^2,
\end{equation}
such that $H_k$ satisfies Assumption~\ref{as:ZOPN-H} algorithmically (cf. \cite[Lemma 3.3]{BFGSbdd2016} and \cite[Online supplement]{ZOQN2023}), otherwise it keeps unchanged.
The initial matrix $H_0$ is chosen as the identity matrix.
We apply Algorithm \ref{alg:FISTA} (FISTA) to solve \eqref{equ:subproblem} with $\gamma=0.9$ and $10^3$ maximum number of inner iterations.

We set $\Delta_k\equiv \Delta = 5\times 10^{-10}$ for SS-ProxGD, GS-ProxGD, FD-ProxGD and ZOPN, and $\Delta_a=5\times 10^{-7}$, $\Delta_b= \Delta = 5\times 10^{-10}$ for DGS-ProxGD as in \cite{ZOPG2023}.
We report results for the best versions of SS-ProxGD, GS-ProxGD, DGS-ProxGD and FD-ProxGD based on tuning the constant step size for each problem (i.e., by considering  $t=2^j, j\in \{  -15,-14,\ldots,9,10 \}$).
For all experiments we report the mean results across 10 different random runs for SS-ProxGD, GS-ProxGD and DGS-ProxGD.

The default parameters of Algorithm \ref{ZOPN} are set as $c_1=10^{-4}$, $c_2=10^{-8}$, $\beta=0.5$, and $\bar{t}=1$.
To further explore the potential of the proposed method, we also report numerical results obtained by tuning the initial step size $\bar{t}$ for instances where the default configuration yields unsatisfactory performance.
The experimental results demonstrate that the default parameter setting is sufficiently stable and robust, and performs well on the vast majority of test problems.

All algorithms stop at iteration $k$ if
\begin{equation}\label{equ:stopcriterion}
	F(x_k)-F^{\star}< 10^{-16} \text{ or }  \mathrm{NF}>300(n+1),
\end{equation}
where $F^{\star}$ is obtained by the first-order proximal gradient method, and NF stands for the number of function evaluations.
The initial point for all tested algorithms is set to be the origin.

We test the algorithms on the  LASSO problem, the $\ell_1$-regularized logistic regression,
the $\ell_2$-regularized logistic regression, the elastic net-regularized binary classification
as well as the nonconvex support vector machine problem.
Except for the first problem, we use the datasets from the
LIBSVM website\footnote{\href{https://www.csie.ntu.edu.tw/\textasciitilde cjlin/libsvmtools/datasets/binary.html}{https://www.csie.ntu.edu.tw/\textasciitilde cjlin/libsvmtools/datasets/binary.html}}.
The detailed information of the datasets is presented in Table \ref{tab:data}.

\begin{table}[H]
	\centering
	\begin{tabular}{lcc}
		\hline
		Dataset & Sample size $p$ & Dimension $n$  \\  \hline
		\texttt{a1a} & 30956 & 123 \\
		\texttt{a4a} & 27780 & 123 \\
		\texttt{a9a} & 16281 & 122 \\
		\texttt{covtype} & 581012 & 54 \\
		\texttt{heart} & 270 & 13 \\
		\texttt{ijcnn1} & 49990 & 22 \\
		\texttt{mushrooms} & 8124 & 112 \\
		\texttt{sonar} & 208 & 60 \\
		\texttt{svmguide3} & 41 & 22 \\
		\texttt{w1a} & 47272 & 300 \\
		\texttt{w4a} & 42383 & 300 \\
		\texttt{w8a} & 14951 & 300 \\ \hline
	\end{tabular}
	\caption{Datasets information.}
	\label{tab:data}
\end{table}

\subsection{LASSO problem}\label{subsec:lasso}

Consider the LASSO problem \cite{APSR2022,IPZOPM2024}
\begin{equation}\label{equ:lasso}
	\min_{x\in \mathbb{R}^n} \frac12 \| Ax-b \|^2 + \zeta  \| x \|_1,
\end{equation}
where $A\in \mathbb{R}^{p\times n}$ is a column-unit Gaussian matrix, $x\in \mathbb{R}^n$ is a Gaussian vector with a sparsity level 0.01,
and $b\in \mathbb{R}^p$ is obtained by $Ax$ disturbed with a Gaussian white noise of level $10^{-4}$.
We set $\zeta=5\times 10^{-3}$ and $p=0.4n$ with $n=500, 1000, 2000, 4000$, respectively.

We test all algorithms on the problem. The results are presented in Figure \ref{fig:LassoNF}.
It can be seen that the performances of SS-ProxGD, DGS-ProxGD and GS-ProxGD are similar.
They fail to achieve the desired accuracy within the predetermined computational efforts.
FD-ProxGD usually performs better than SS-ProxGD, DGS-ProxGD and GS-ProxGD,
and ZOPN performs best among all the algorithms and requires much fewer number of function evaluations to get the optimal solution.
The reason is that ZOPN takes into account the second-order information of the problem.

\begin{figure}[htbp!]
	\centering
	\begin{subfigure}{0.24\linewidth}
		\centering
		\includegraphics[width=1\linewidth]{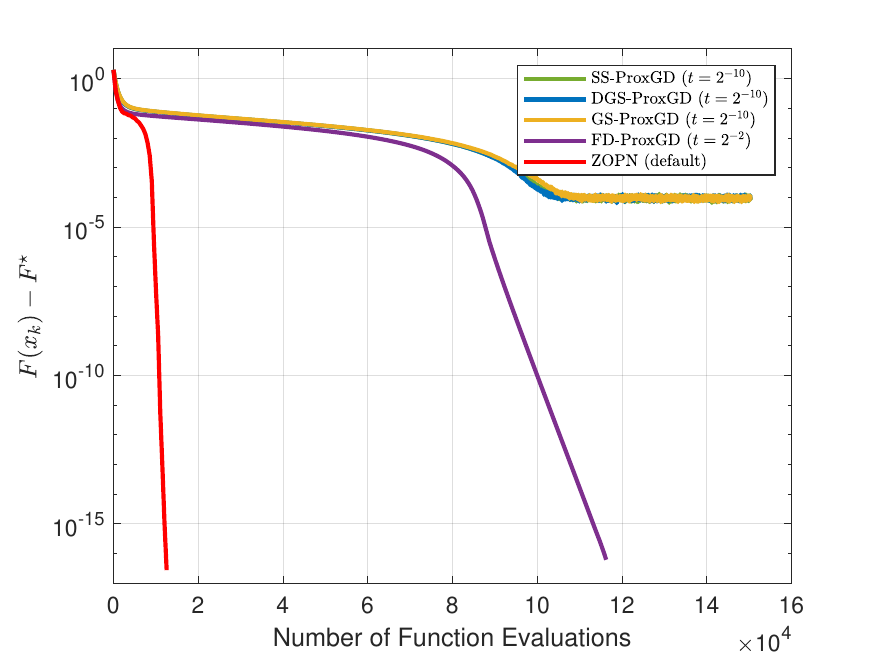}
		\caption{$n = 500$}
		\label{Lasso5e2}
	\end{subfigure}
	\centering
	\begin{subfigure}{0.24\linewidth}
		\centering
		\includegraphics[width=1\linewidth]{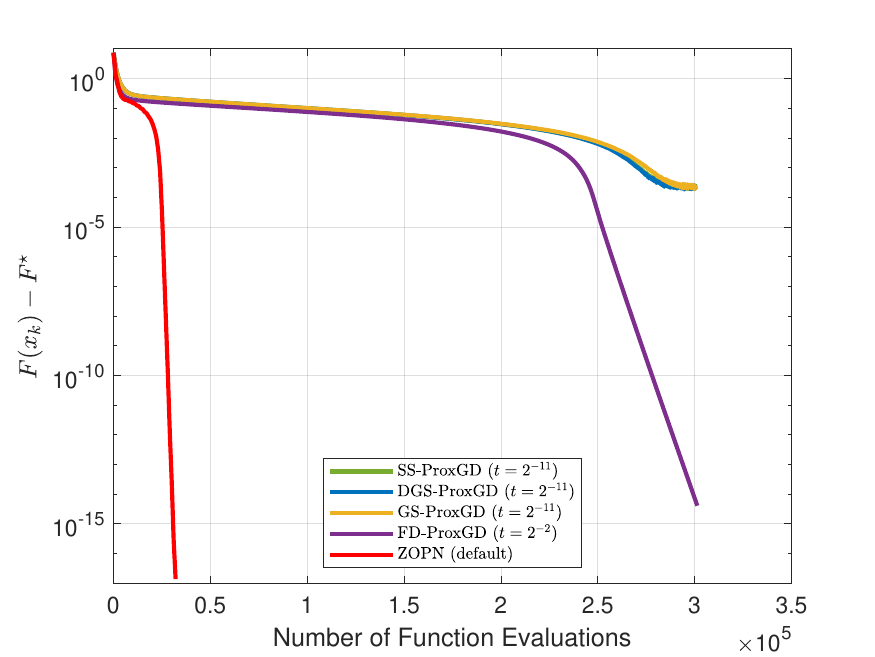}
		\caption{$n = 1000$}
		\label{Lasso1e3}
	\end{subfigure}
	\centering
	\begin{subfigure}{0.24\linewidth}
		\centering
		\includegraphics[width=1\linewidth]{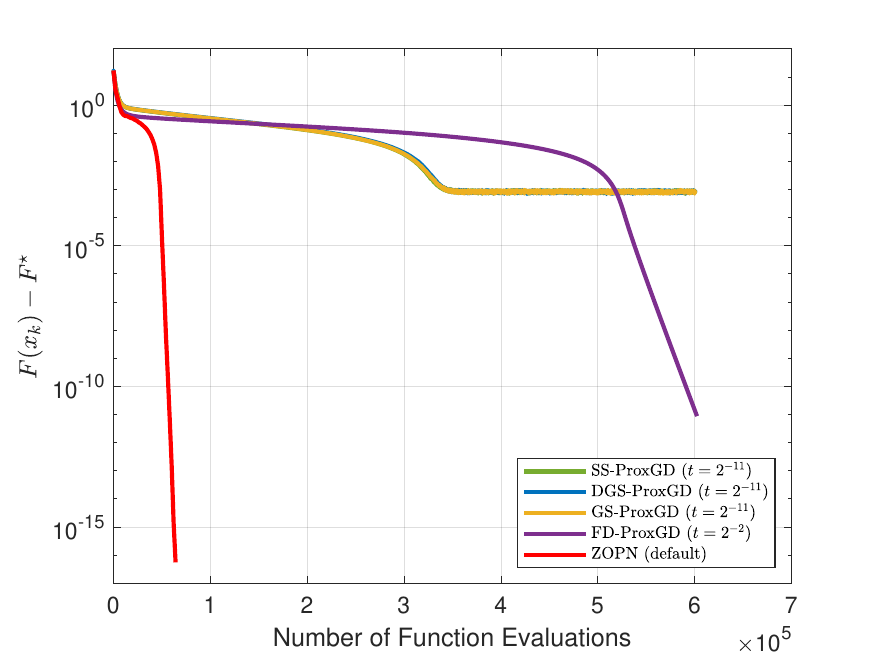}
		\caption{$n = 2000$}
		\label{Lasso2e3}
	\end{subfigure}
	\centering
	\begin{subfigure}{0.24\linewidth}
		\centering
		\includegraphics[width=1\linewidth]{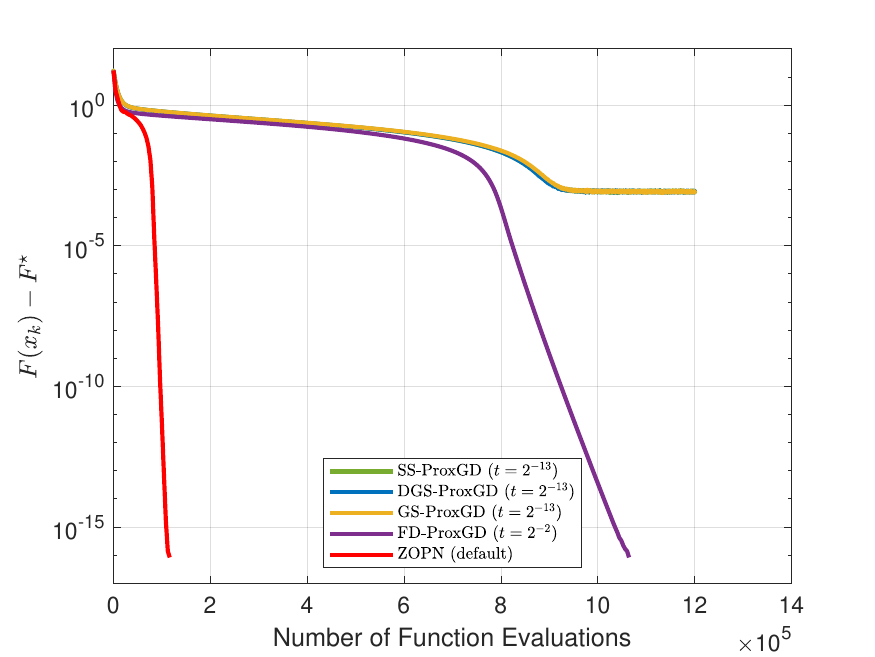}
		\caption{$n = 4000$}
		\label{Lasso4e3}
	\end{subfigure}
	\caption{Objective value versus NF on the LASSO problem.}
	\label{fig:LassoNF}
\end{figure}
 

The results in Figure~\ref{fig:LassoNF} show fast superlinear-like decay for the LASSO problem.
We use this example to verify the local R-superlinear convergence stated in Theorem~\ref{thm:localrate}. We test two variants of Algorithm~\ref{ZOPN}: ZOPN-BFGS, combining forward-difference-based gradients with the BFGS Hessian approximation \eqref{eq:bfgs}, and ZOPN-LazyH, sharing the same gradient estimator with the lazy Hessian strategy \eqref{eq:lazyH}.

The true sparse vector $x$ has a sparsity level of 0.1.
We set the line search parameter $c_2=1$ and let the sampling radius decay as $\Delta_k = \max\{  10^{-10},  \min \{ 10^{-3}, 0.99^{2^k}\} \}$. Algorithm~\ref{alg:FISTA} runs with
$10^4$ inner iterations. The initial point is $x_0=x^{\star}+\sigma u$, where $\sigma=1/n$, $u\sim\mathcal{N}(0,I)$, and $x^{\star}$ is produced by the first-order proximal gradient method for~\eqref{equ:lasso}. All other configurations remain the same as above.
We track the absolute error
$\varepsilon_k:=\|x_k - x^{\star}\|$ and the root error $\mathrm{root}_k:=\varepsilon_k^{1/(k+1)}$.
Both algorithms stop when $\varepsilon_k<10^{-6}$.

The results are summarized in Table~\ref{tab:eg2} and Figure~\ref{fig:eg2}, where a dash ``--" denotes early algorithm termination. ZOPN-LazyH stagnates over iterations 2--9. Under the radius updating rule, $\Delta_k$ remains at $10^{-3}$ for $k=0,\ldots,9$ and begins to decay only at
$k=10$. The algorithm thus fails to advance within the neighborhood induced by this fixed large sampling radius. Once $\Delta_k$ decreases at $k=10$, the error drops rapidly.
By contrast, ZOPN-BFGS reduces the error steadily across all iterations.
Its root error declines markedly in later iterations, verifying the local R-superlinear convergence property. These numerical observations agree well with the local convergence theory in Section~\ref{sec:Local}.

\begin{table}[hbt!]
	\centering
	\resizebox{\textwidth}{!}{
		\begin{tabular}{lllllllllllllllllllllllll}
			\hline
			& & \multicolumn{7}{l}{$n=10$} & & \multicolumn{7}{l}{$n=20$} & &
			\multicolumn{7}{l}{$n=50$} \\
			\cline{3-9} \cline{11-17} \cline{19-25}
			$k$ & & \multicolumn{4}{l}{ZOPN-BFGS} & \multicolumn{3}{l}{ZOPN-LazyH} & & \multicolumn{4}{l}{ZOPN-BFGS} & \multicolumn{3}{l}{ZOPN-LazyH} & & \multicolumn{4}{l}{ZOPN-BFGS} & \multicolumn{3}{l}{ZOPN-LazyH}  \\
			\cline{3-5} \cline{7-9} \cline{11-13} \cline{15-17} \cline{19-21} \cline{23-25}
			& & NF & $\varepsilon_k$ & $\mathrm{root}_k$ & & NF & $\varepsilon_k$ & $\mathrm{root}_k$ & & NF & $\varepsilon_k$ & $\mathrm{root}_k$ & & NF & $\varepsilon_k$ & $\mathrm{root}_k$ & & NF & $\varepsilon_k$ & $\mathrm{root}_k$ & & NF & $\varepsilon_k$ & $\mathrm{root}_k$ \\
			\hline
			0 & & 0 & 3.70e-01 & 3.70e-01 & & 0 & 3.70e-01 & 3.70e-01 & & 0 & 2.38e-01 & 2.38e-01 & & 0 & 2.38e-01 & 2.38e-01 & & 0 & 1.25e-01 & 1.25e-01 & & 0 & 1.25e-01 & 1.25e-01 \\
			1 & & 12 & 2.46e-01 & 4.96e-01 & & 66 & 5.00e-04 & 2.24e-02 & & 22 & 1.79e-01 & 4.23e-01 & & 231 & 1.17e-02 & 1.08e-01 & & 52 & 1.08e-01 & 3.28e-01 & & 1326 & 1.22e-03 & 3.50e-02 \\
			\vdots & & \vdots & \vdots & \vdots & & \vdots & \vdots & \vdots & & \vdots & \vdots & \vdots & & \vdots & \vdots & \vdots & & \vdots & \vdots & \vdots & & \vdots & \vdots & \vdots \\
			9 & & 100 & 7.30e-02 & 7.70e-01 & & 154 & 5.00e-04 & 4.68e-01 & & 192 & 5.71e-02 & 7.51e-01 & & 399 & 1.17e-02 & 6.41e-01 & & 460 & 3.00e-02 & 7.04e-01 & & 1734 & 1.22e-03 & 5.11e-01 \\
			10 & & 111 & 7.22e-02 & 7.87e-01 & & 165 & 1.70e-05 & 3.68e-01 & & 213 & 2.85e-02 & 7.24e-01 & & 420 & 3.97e-04 & 4.91e-01 & & 511 & 2.49e-02 & 7.15e-01 & & 1785 & 5.41e-05 & 4.09e-01 \\
			11 & & 122 & 7.15e-02 & 8.03e-01 & & 231 & 4.71e-06 & 3.60e-01 & & 234 & 2.17e-02 & 7.27e-01 & & 441 & 2.37e-09 & 1.91e-01 & & 562 & 1.90e-02 & 7.19e-01 & & 1836 & 5.55e-09 & 2.05e-01 \\
			12 & & 133 & 6.96e-02 & 8.15e-01 & & 242 & 1.32e-06 & 3.53e-01 & & 255 & 8.64e-03 & 6.94e-01 & & -- & -- & -- & & 613 & 1.40e-02 & 7.20e-01 & & -- & -- & -- \\
			13 & & 144 & 6.69e-02 & 8.24e-01 & & 253 & 3.69e-07 & 3.47e-01 & & 276 & 7.81e-03 & 7.07e-01 & & -- & -- & -- & & 664 & 8.17e-03 & 7.09e-01 & & -- & -- & -- \\
			14 & & 155 & 6.32e-02 & 8.32e-01 & & -- & -- & -- & & 297 & 6.94e-03 & 7.18e-01 & & -- & -- & -- & & 715 & 5.34e-03 & 7.06e-01 & & -- & -- & -- \\
			15 & & 166 & 5.67e-02 & 8.36e-01 & & -- & -- & -- & & 318 & 6.53e-03 & 7.30e-01 & & -- & -- & -- & & 766 & 1.69e-03 & 6.71e-01 & & -- & -- & -- \\
			16 & & 177 & 3.79e-02 & 8.25e-01 & & -- & -- & -- & & 339 & 5.43e-03 & 7.36e-01 & & -- & -- & -- & & 817 & 8.01e-04 & 6.57e-01 & & -- & -- & -- \\
			17 & & 188 & 1.84e-02 & 8.01e-01 & & -- & -- & -- & & 360 & 3.82e-03 & 7.34e-01 & & -- & -- & -- & & 868 & 1.77e-04 & 6.19e-01 & & -- & -- & -- \\
			18 & & 199 & 4.77e-03 & 7.55e-01 & & -- & -- & -- & & 381 & 1.36e-03 & 7.07e-01 & & -- & -- & -- & & 919 & 7.55e-05 & 6.07e-01 & & -- & -- & -- \\
			19 & & 210 & 1.23e-03 & 7.15e-01 & & -- & -- & -- & & 402 & 2.38e-04 & 6.59e-01 & & -- & -- & -- & & 970 & 2.09e-05 & 5.83e-01 & & -- & -- & -- \\
			20 & & 221 & 2.05e-04 & 6.67e-01 & & -- & -- & -- & & 423 & 1.44e-04 & 6.56e-01 & & -- & -- & -- & & 1021 & 1.28e-05 & 5.85e-01 & & -- & -- & -- \\
			21 & & 232 & 2.20e-07 & 4.98e-01 & & -- & -- & -- & & 444 & 2.34e-05 & 6.16e-01 & & -- & -- & -- & & 1072 & 6.18e-06 & 5.80e-01 & & -- & -- & -- \\
			22 & & -- & -- & -- & & -- & -- & -- & & 465 & 6.57e-07 & 5.39e-01 & & -- & -- & -- & & 1123 & 2.19e-06 & 5.67e-01 & & -- & -- & -- \\
			23 & & -- & -- & -- & & -- & -- & -- & & -- & -- & -- & & -- & -- & -- & & 1174 & 3.59e-07 & 5.39e-01 & & -- & -- & -- \\
			\hline
		\end{tabular}
	}
	\caption{Numerical results of Algorithm~\ref{ZOPN} on the LASSO problem.}
	\label{tab:eg2}
\end{table}

\begin{figure}[htbp!]
	\centering
	\begin{subfigure}{0.32\linewidth}
		\centering
		\includegraphics[width=1\linewidth]{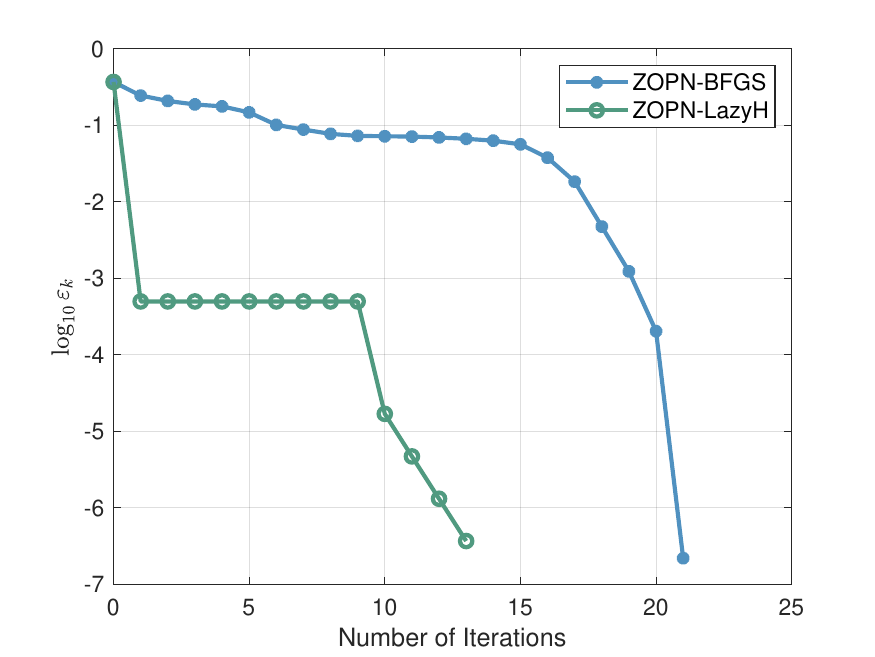}
		\caption{$n=10$}
	\end{subfigure}
	\centering
	\begin{subfigure}{0.32\linewidth}
		\centering
		\includegraphics[width=1\linewidth]{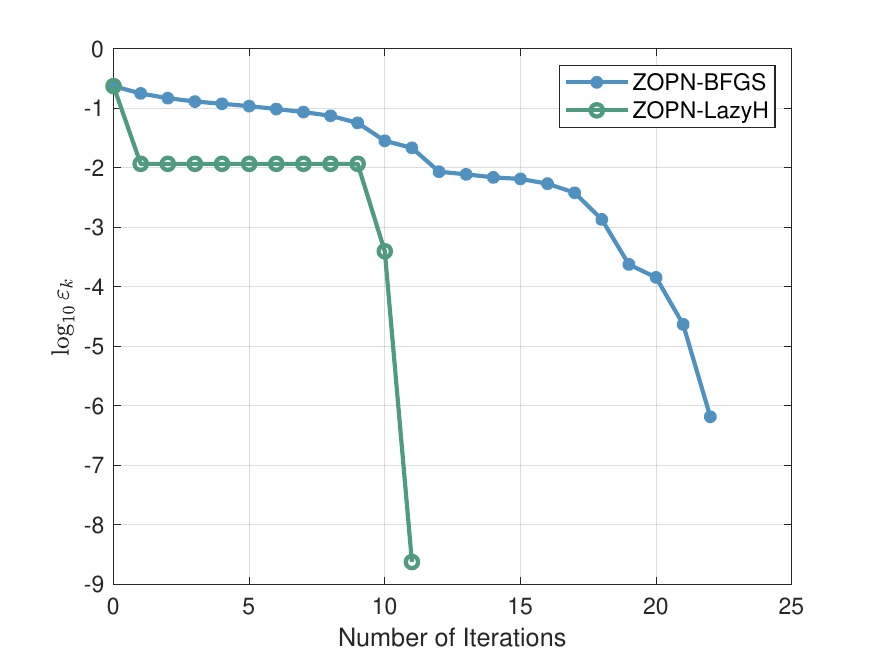}
		\caption{$n=20$}
	\end{subfigure}
	\centering
	\begin{subfigure}{0.32\linewidth}
		\centering
		\includegraphics[width=1\linewidth]{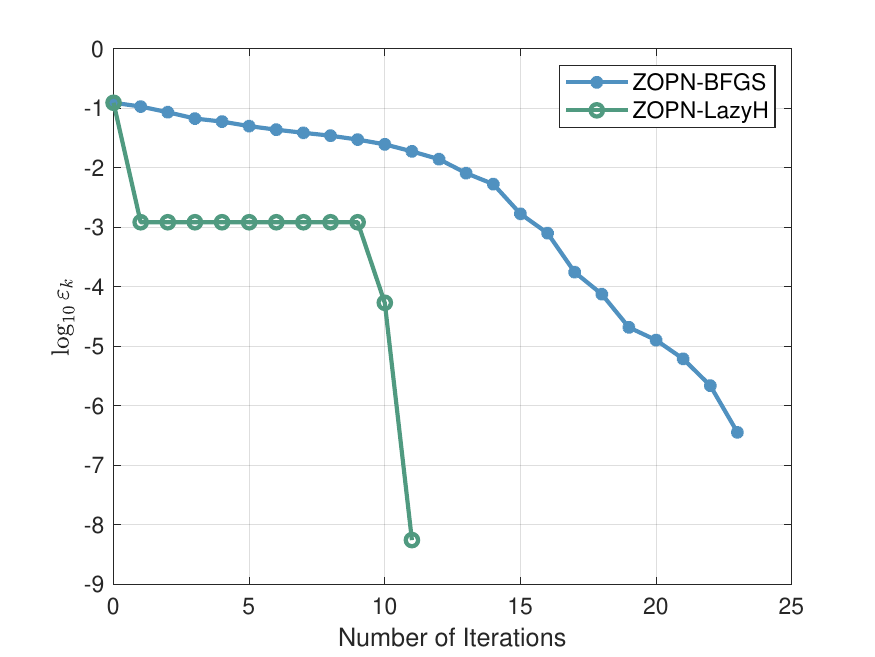}
		\caption{$n=50$}
	\end{subfigure}
	\caption{Evolution of \(\varepsilon_k\) versus iteration $k$ on the LASSO problem.}
	\label{fig:eg2}
\end{figure}

\subsection{$\ell_1$-regularized logistic regression}\label{subsec:L1logistic}
Consider the $\ell_1$-regularized logistic regression problem \cite{Inexact2021,PNM2014,Globalized2021,APSR2022,NIM2016}
\begin{equation*}
	\min_{x\in \mathbb{R}^n}\frac1p \sum_{i=1}^p \mathrm{log} (1+\exp (-b_i a_i^{\top}x  ) ) + \zeta \| x \|_1,
\end{equation*}
where we set $\zeta=10^{-3}$.

The results are summarized in Figure \ref{fig:L1logNF}.
SS-ProxGD, DGS-ProxGD and GS-ProxGD perform almost same on all datasets.
They outperform FD-ProxGD but underperform ZOPN on \texttt{a1a}, \texttt{a4a}, \texttt{a9a}, \texttt{covtype}, \texttt{mushrooms} and \texttt{sonar}.
It can also be observed that ZOPN performs best except for \texttt{w1a} and \texttt{w4a}.
For \texttt{w8a}, ZOPN efficiently obtains a low-accuracy solution with fewer function evaluations, while FD-ProxGD gradually achieves higher accuracy as the number of function evaluations increases. The two methods attain comparable final accuracy under the same computational budget.

\begin{figure}[htbp!]
	\centering
	\begin{subfigure}{0.24\linewidth}
		\centering
		\includegraphics[width=1\linewidth]{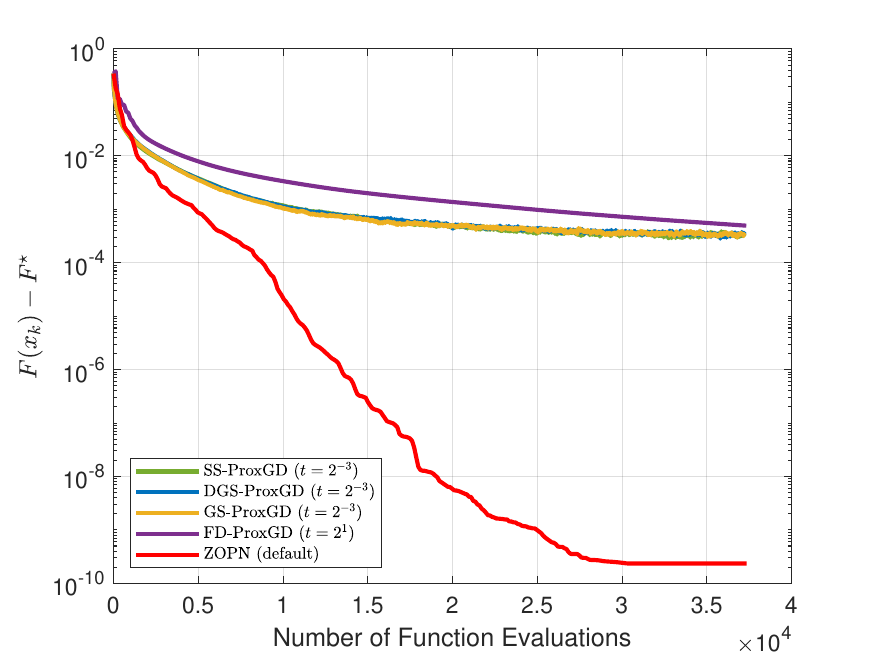}
		\caption{\texttt{a1a}}
		\label{L1loga1a}
	\end{subfigure}
	\centering
	\begin{subfigure}{0.24\linewidth}
		\centering
		\includegraphics[width=1\linewidth]{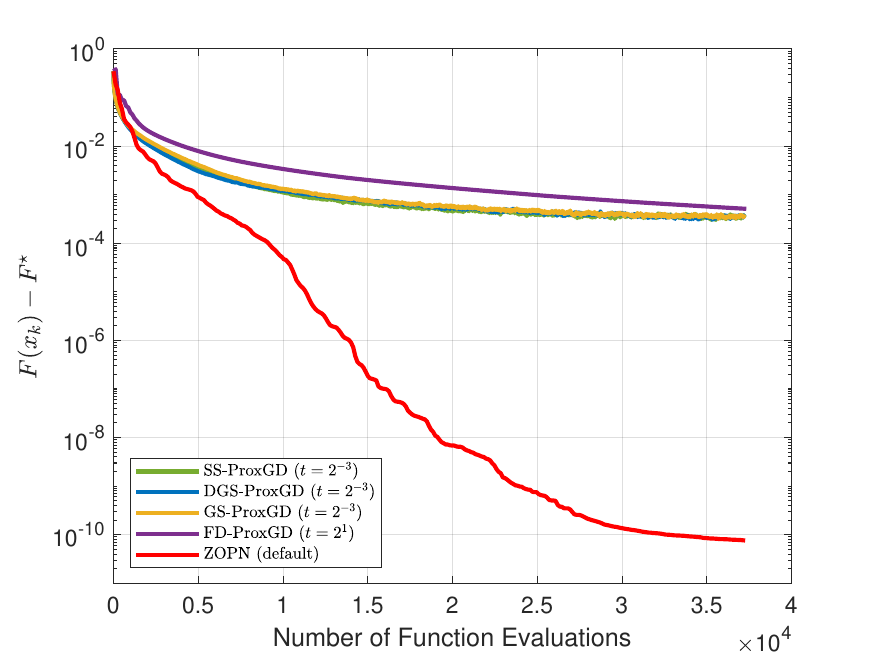}
		\caption{\texttt{a4a}}
		\label{L1loga4a}
	\end{subfigure}
	\centering
	\begin{subfigure}{0.24\linewidth}
		\centering
		\includegraphics[width=1\linewidth]{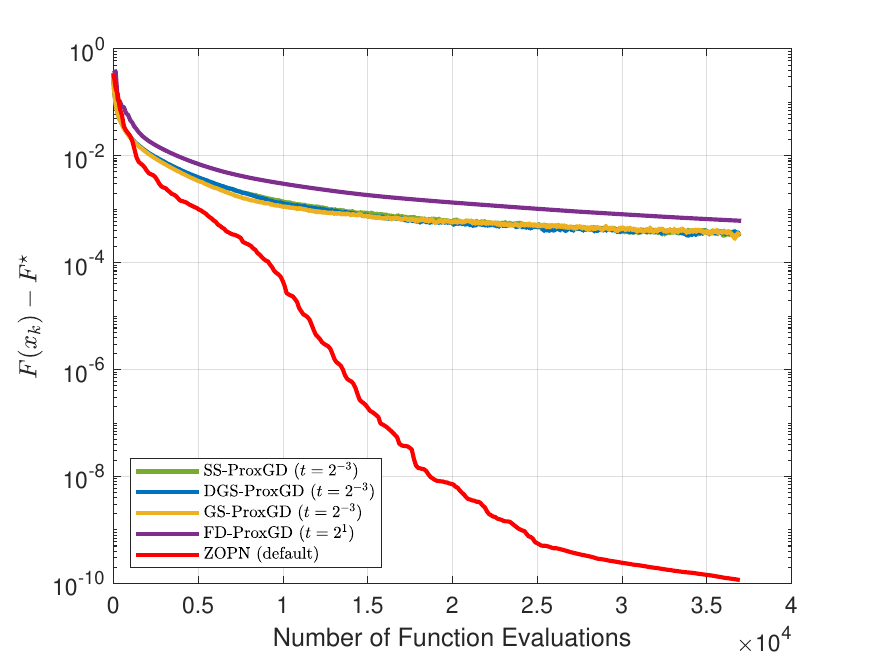}
		\caption{\texttt{a9a}}
		\label{L1loga9a}
	\end{subfigure}
	\centering
	\begin{subfigure}{0.24\linewidth}
		\centering
		\includegraphics[width=1\linewidth]{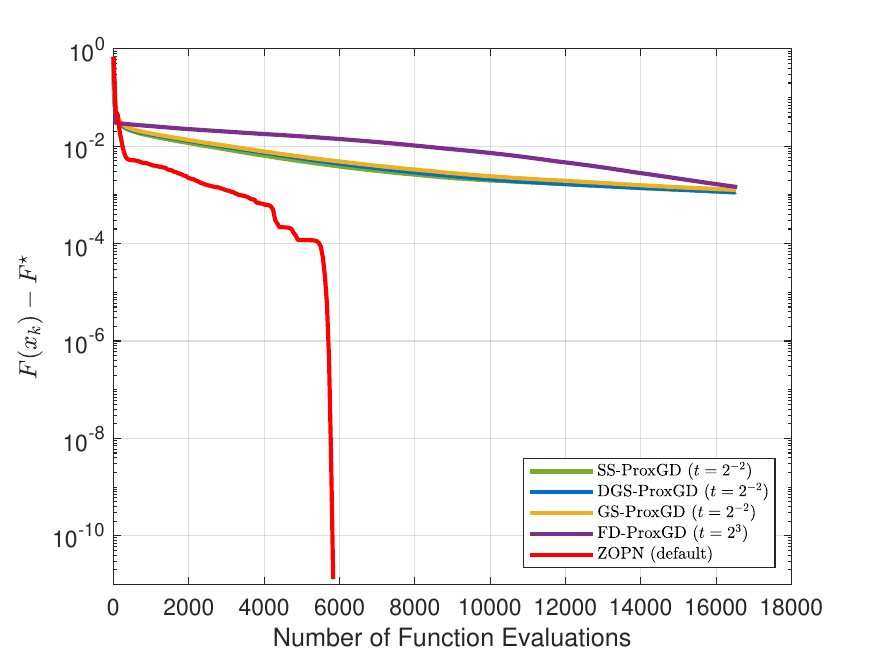}
		\caption{\texttt{covtype}}
		\label{L1logcovtype}
	\end{subfigure}
	
	\centering
	\begin{subfigure}{0.24\linewidth}
		\centering
		\includegraphics[width=1\linewidth]{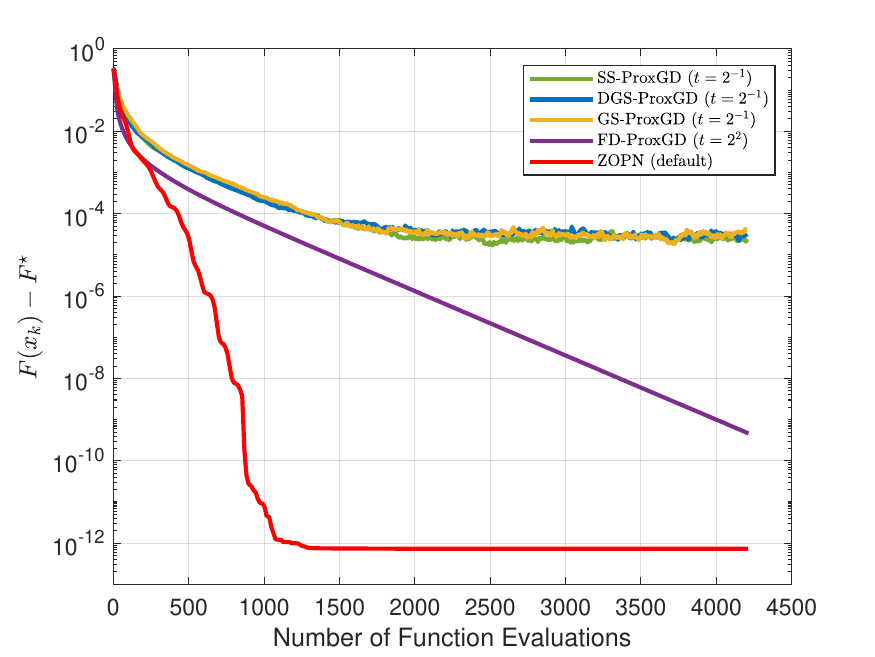}
		\caption{\texttt{heart}}
		\label{L1logheart}
	\end{subfigure}
	\centering
	\begin{subfigure}{0.24\linewidth}
		\centering
		\includegraphics[width=1\linewidth]{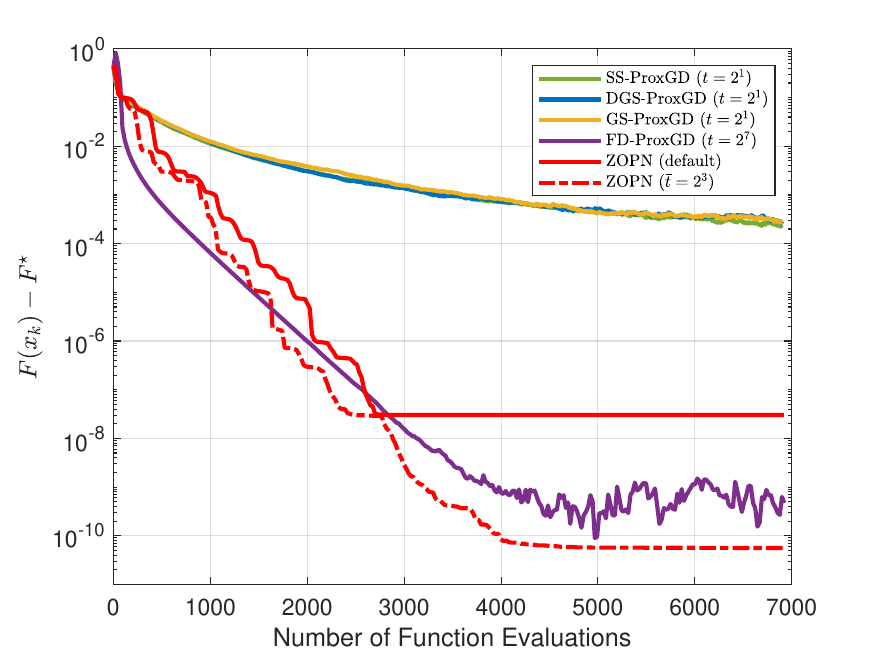}
		\caption{\texttt{ijcnn1}}
		\label{L1logijcnn}
	\end{subfigure}
	\centering
	\begin{subfigure}{0.24\linewidth}
		\centering
		\includegraphics[width=1\linewidth]{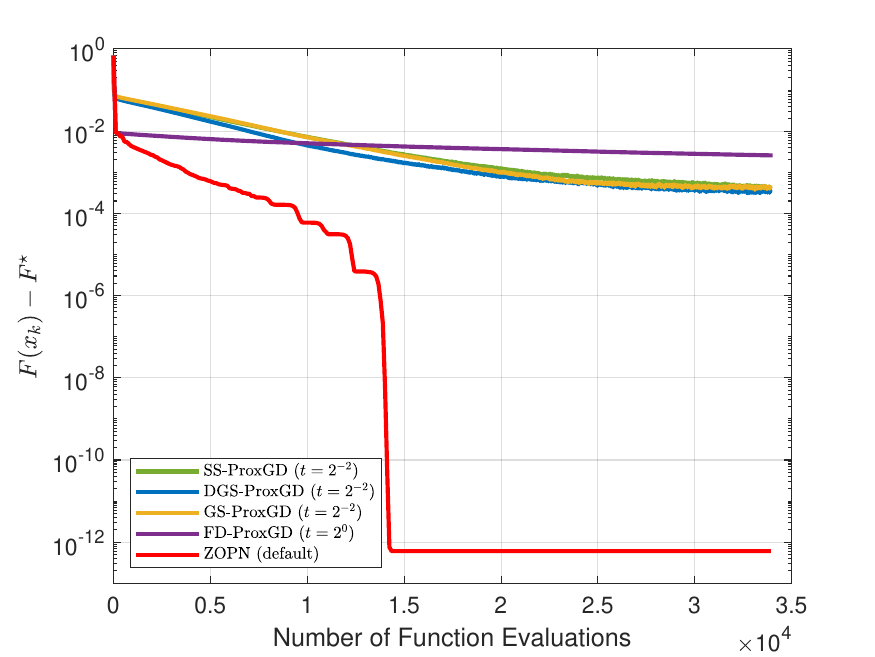}
		\caption{\texttt{mushrooms}}
		\label{L1logmushroom}
	\end{subfigure}
	\centering
	\begin{subfigure}{0.24\linewidth}
		\centering
		\includegraphics[width=1\linewidth]{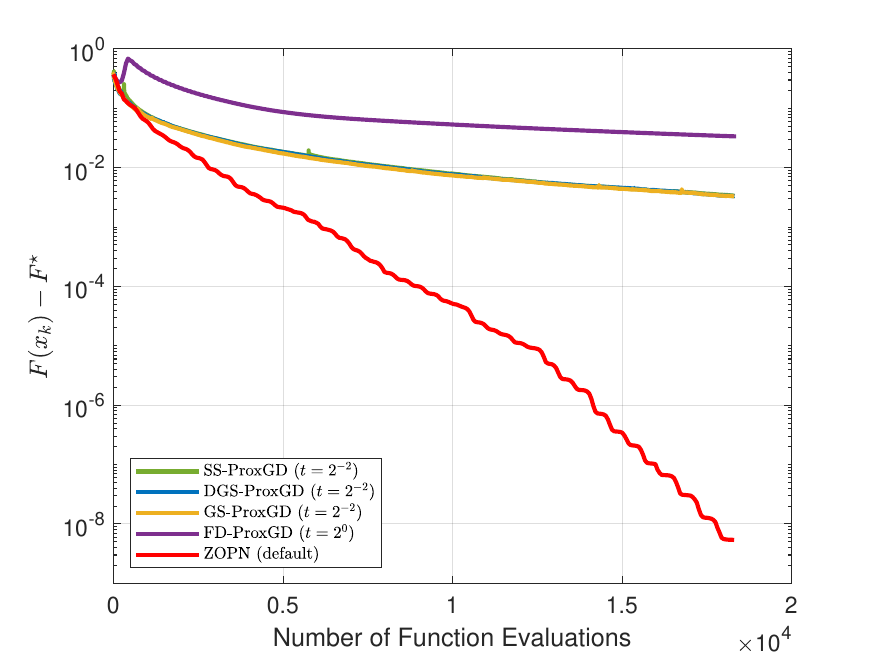}
		\caption{\texttt{sonar}}
		\label{L1logsonar}
	\end{subfigure}
	
	\centering
	\begin{subfigure}{0.24\linewidth}
		\centering
		\includegraphics[width=1\linewidth]{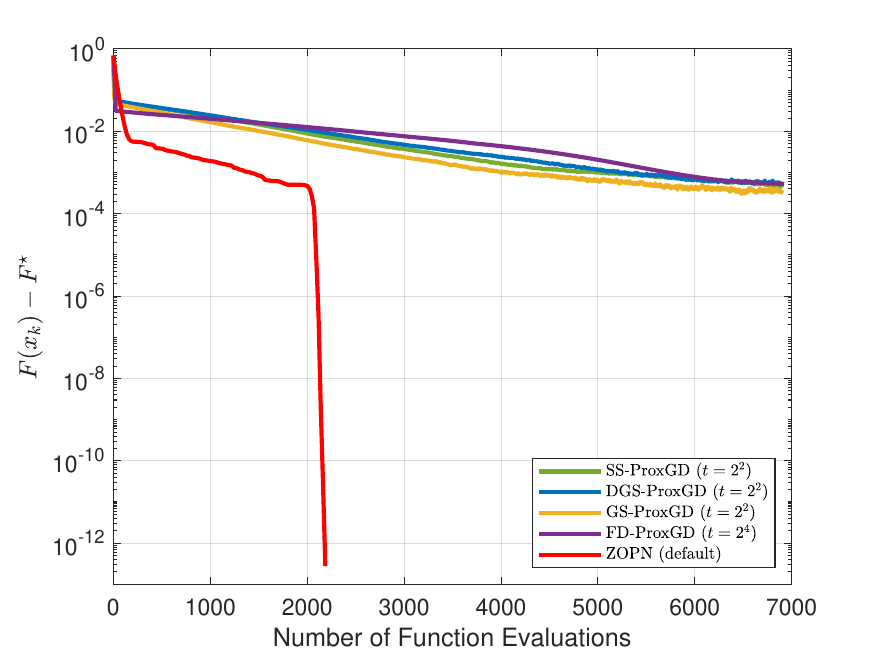}
		\caption{\texttt{svmguide3}}
		\label{L1logsvmguide}
	\end{subfigure}
	\centering
	\begin{subfigure}{0.24\linewidth}
		\centering
		\includegraphics[width=1\linewidth]{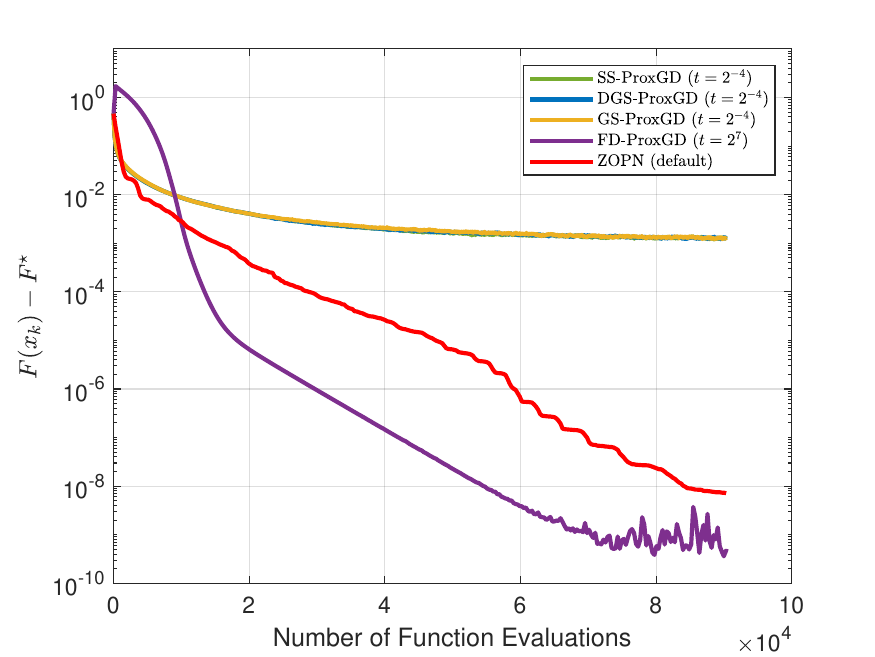}
		\caption{\texttt{w1a}}
		\label{L1logw1a}
	\end{subfigure}
	\centering
	\begin{subfigure}{0.24\linewidth}
		\centering
		\includegraphics[width=1\linewidth]{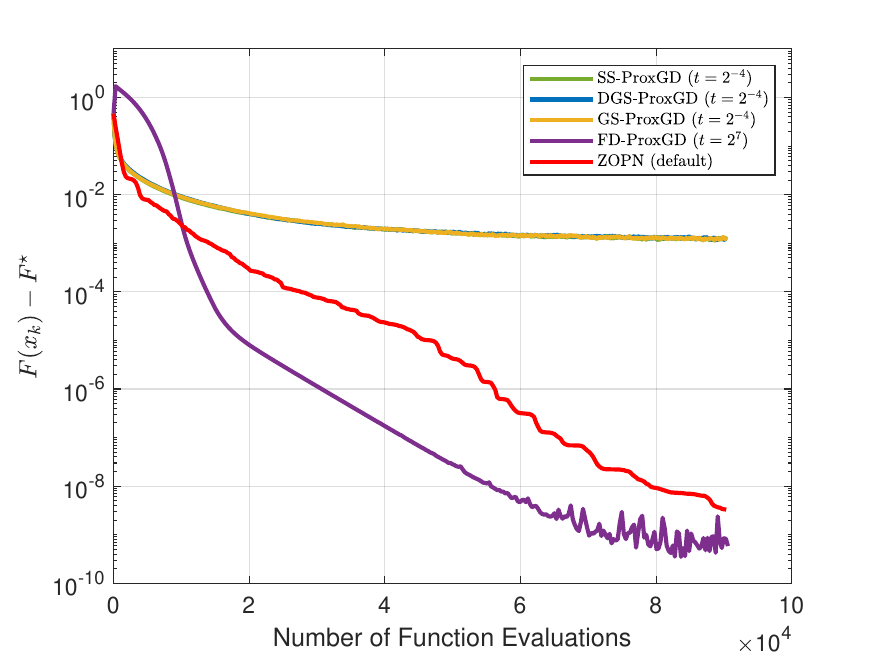}
		\caption{\texttt{w4a}}
		\label{L1logw4a}
	\end{subfigure}
	\centering
	\begin{subfigure}{0.24\linewidth}
		\centering
		\includegraphics[width=1\linewidth]{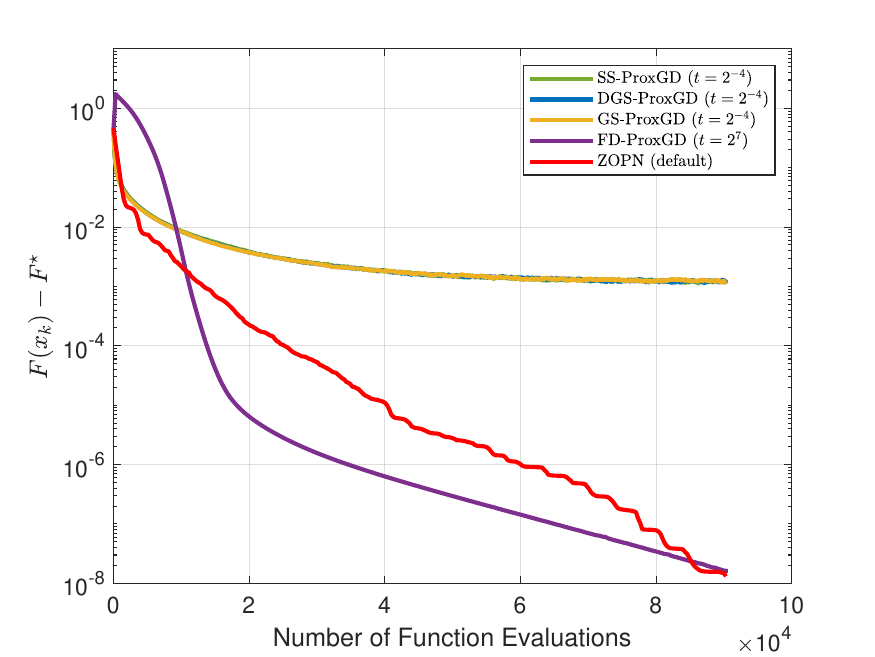}
		\caption{\texttt{w8a}}
		\label{L1logw8a}
	\end{subfigure}
	\caption{Objective value versus NF on the  $\ell_1$-regularized logistic regression problem.}
	\label{fig:L1logNF}
\end{figure}

\subsection{$\ell_2$-regularized logistic regression}\label{subsec:L2logistic}
Consider the $\ell_2$-regularized logistic regression problem \cite{NIM2016}
\begin{equation*}
	\min_{x\in \mathbb{R}^n}\frac1p \sum_{i=1}^p \mathrm{log} (1+\exp (-b_i a_i^{\top}x )  ) + \frac{\zeta}{2}  \| x \|^2,
\end{equation*}
where we set $\zeta=10^{-3}$.

\begin{figure}[htbp!]
	\centering
	\begin{subfigure}{0.24\linewidth}
		\centering
		\includegraphics[width=1\linewidth]{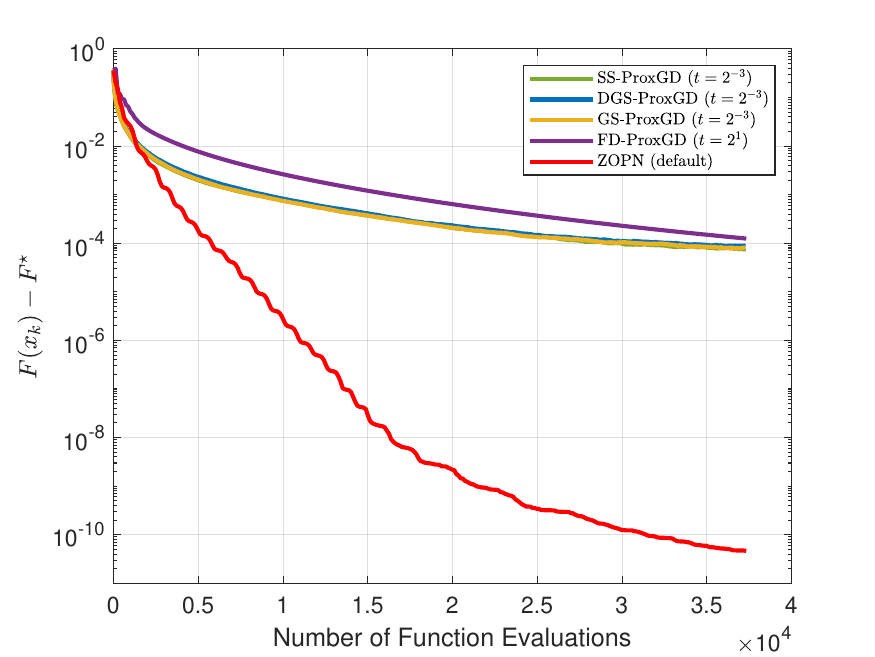}
		\caption{\texttt{a1a}}
		\label{L2loga1a}
	\end{subfigure}
	\centering
	\begin{subfigure}{0.24\linewidth}
		\centering
		\includegraphics[width=1\linewidth]{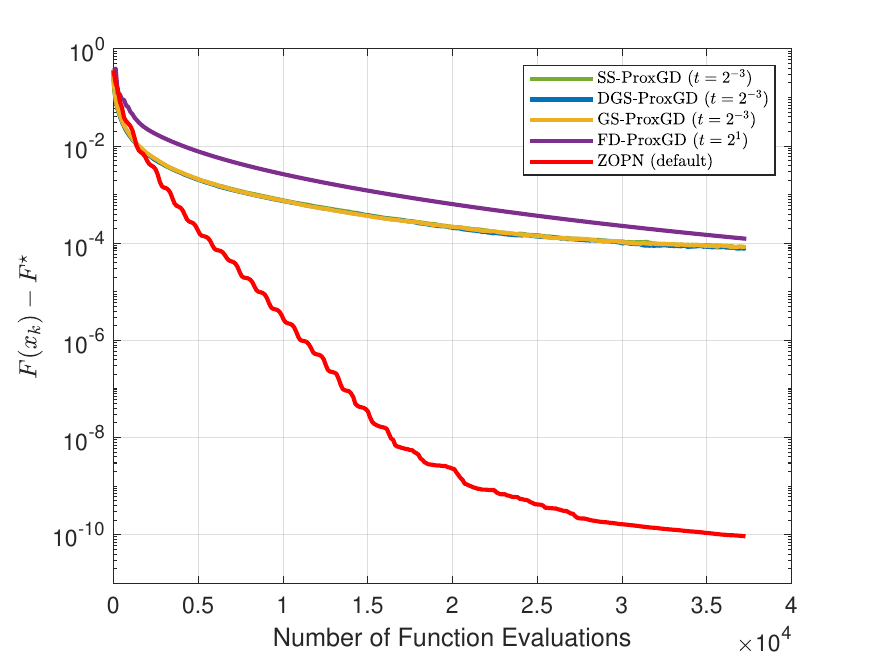}
		\caption{\texttt{a4a}}
		\label{L2loga4a}
	\end{subfigure}
	\centering
	\begin{subfigure}{0.24\linewidth}
		\centering
		\includegraphics[width=1\linewidth]{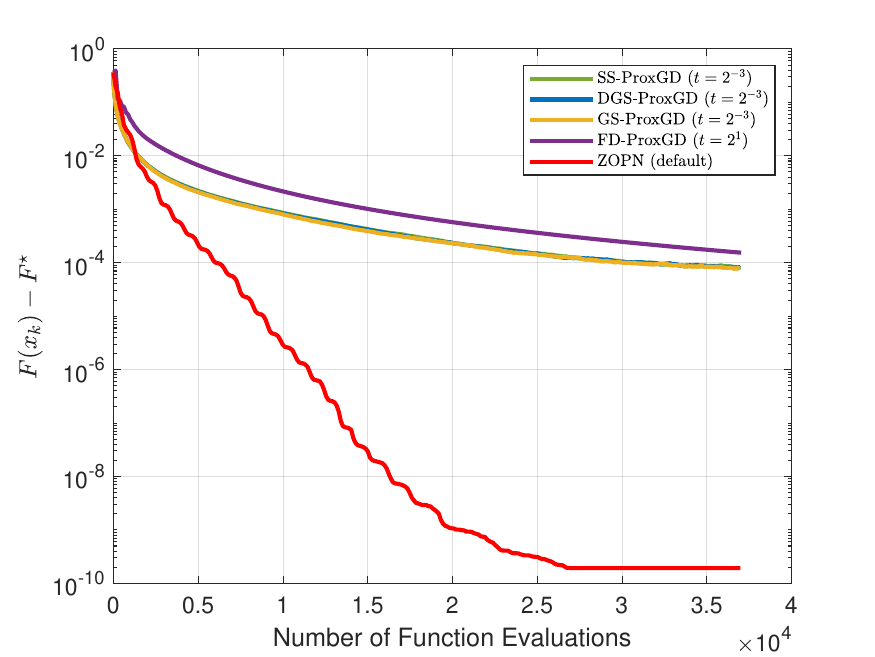}
		\caption{\texttt{a9a}}
		\label{L2loga9a}
	\end{subfigure}
	\centering
	\begin{subfigure}{0.24\linewidth}
		\centering
		\includegraphics[width=1\linewidth]{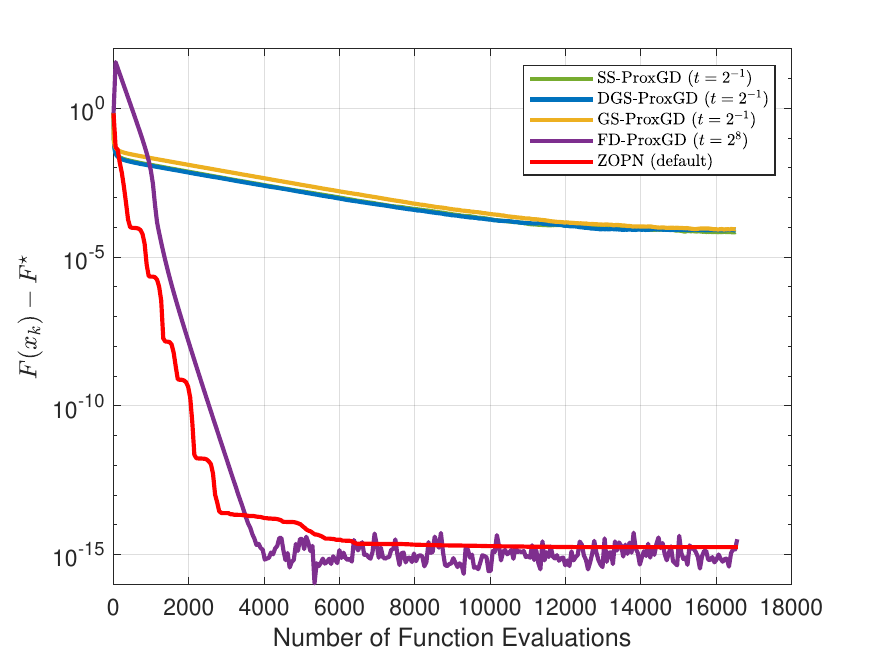}
		\caption{\texttt{covtype}}
		\label{L2logcovtype}
	\end{subfigure}
	
	\centering
	\begin{subfigure}{0.24\linewidth}
		\centering
		\includegraphics[width=1\linewidth]{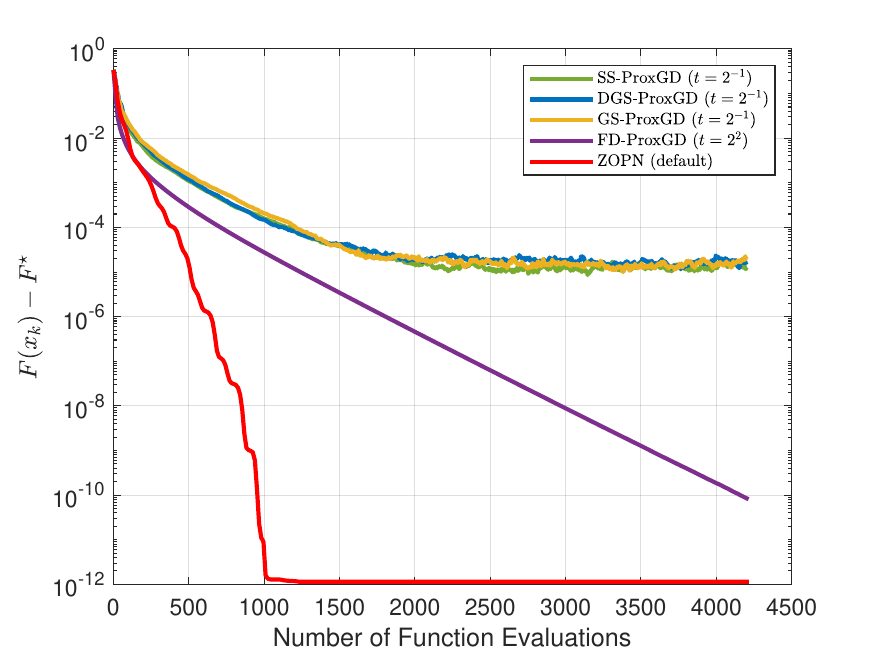}
		\caption{\texttt{heart}}
		\label{L2logheart}
	\end{subfigure}
	\centering
	\begin{subfigure}{0.24\linewidth}
		\centering
		\includegraphics[width=1\linewidth]{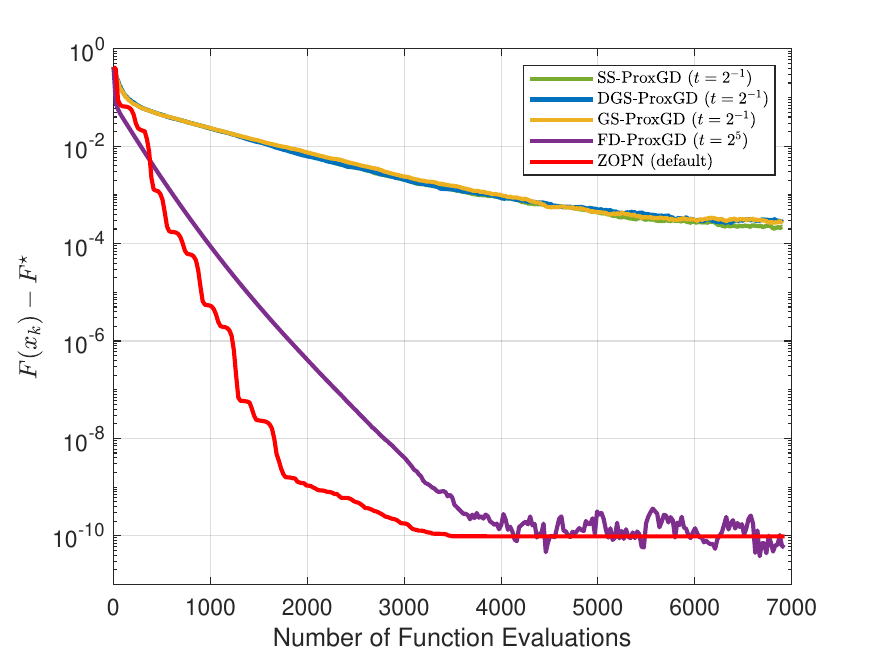}
		\caption{\texttt{ijcnn1}}
		\label{L2logijcnn}
	\end{subfigure}
	\centering
	\begin{subfigure}{0.24\linewidth}
		\centering
		\includegraphics[width=1\linewidth]{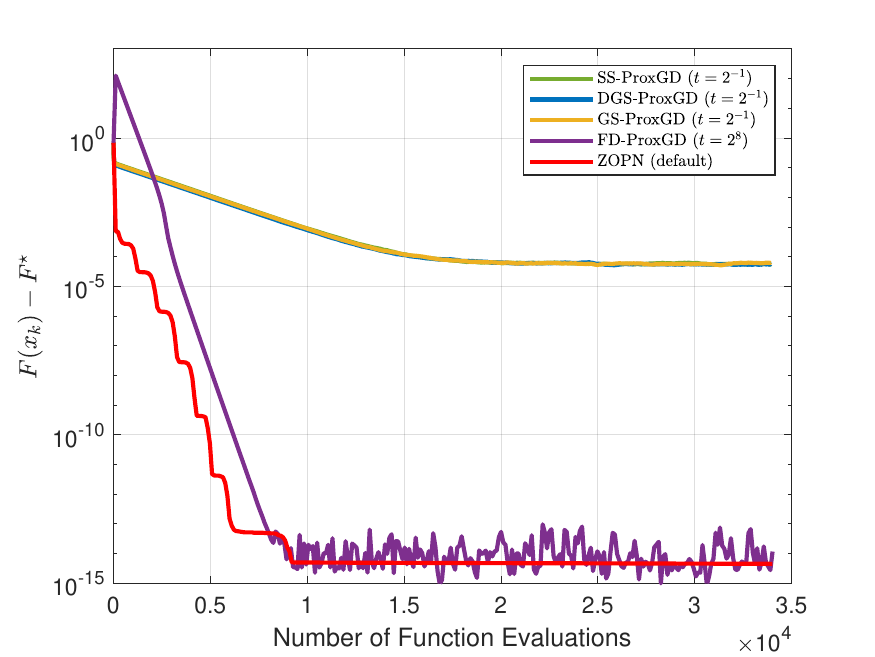}
		\caption{\texttt{mushrooms}}
		\label{L2logmushroom}
	\end{subfigure}
	\centering
	\begin{subfigure}{0.24\linewidth}
		\centering
		\includegraphics[width=1\linewidth]{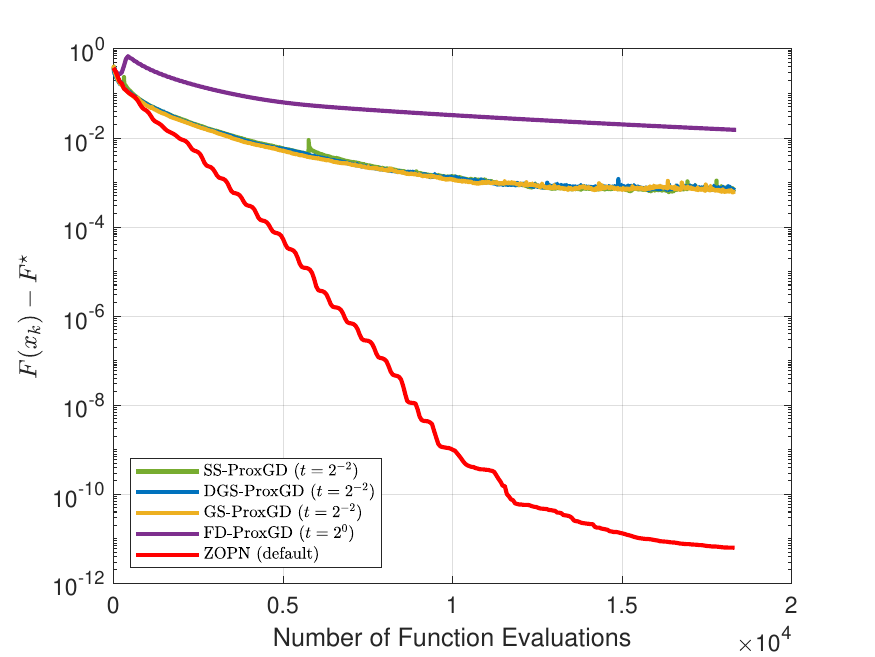}
		\caption{\texttt{sonar}}
		\label{L2logsonar}
	\end{subfigure}
	
	\centering
	\begin{subfigure}{0.24\linewidth}
		\centering
		\includegraphics[width=1\linewidth]{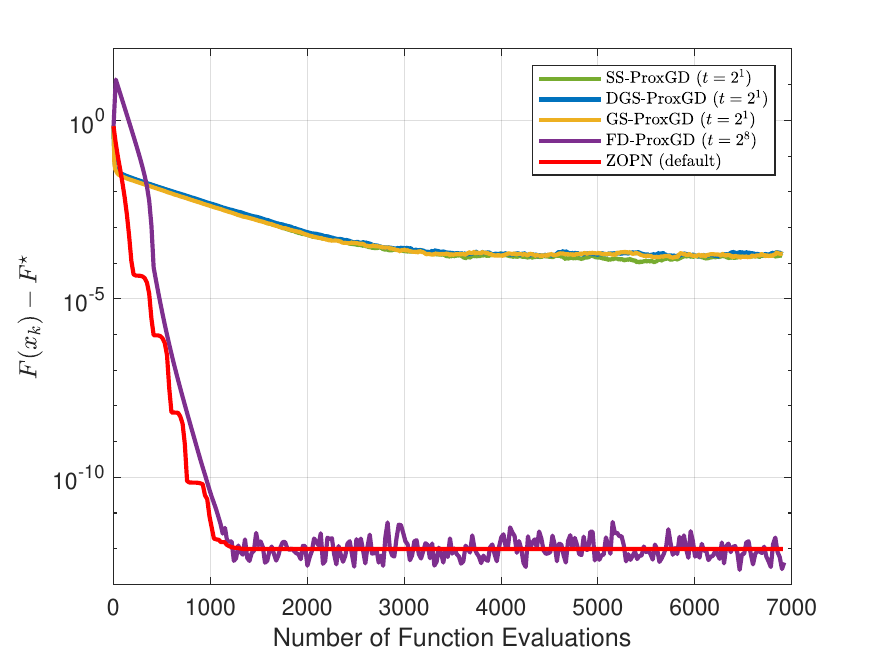}
		\caption{\texttt{svmguide3}}
		\label{L2logsvmguide}
	\end{subfigure}
	\centering
	\begin{subfigure}{0.24\linewidth}
		\centering
		\includegraphics[width=1\linewidth]{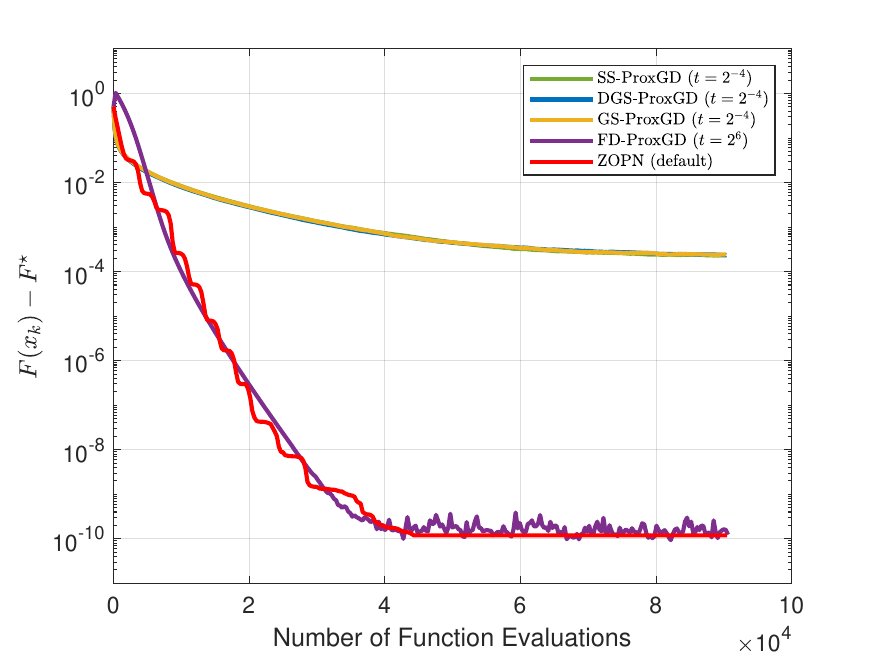}
		\caption{\texttt{w1a}}
		\label{L2logw1a}
	\end{subfigure}
	\centering
	\begin{subfigure}{0.24\linewidth}
		\centering
		\includegraphics[width=1\linewidth]{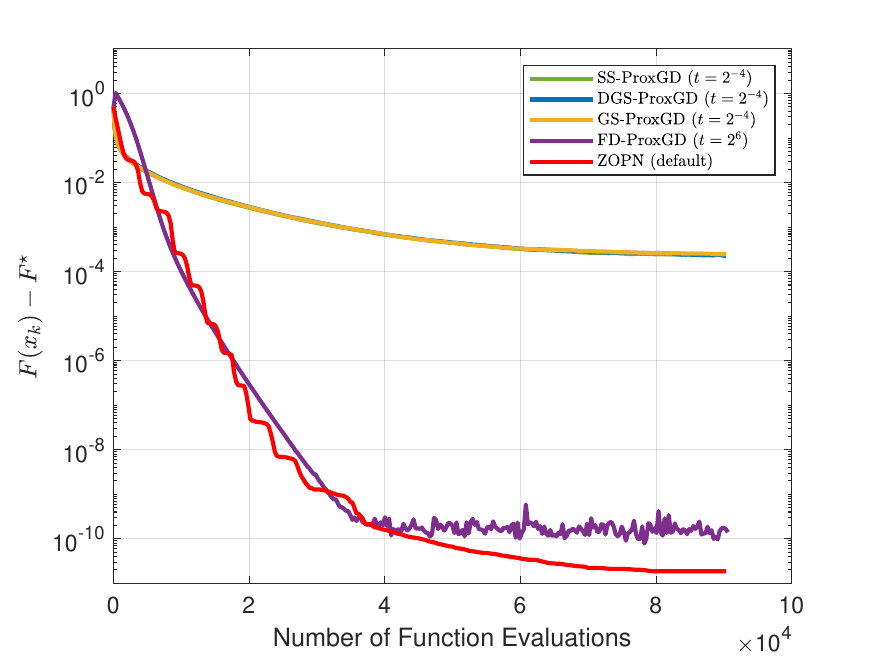}
		\caption{\texttt{w4a}}
		\label{L2logw4a}
	\end{subfigure}
	\centering
	\begin{subfigure}{0.24\linewidth}
		\centering
		\includegraphics[width=1\linewidth]{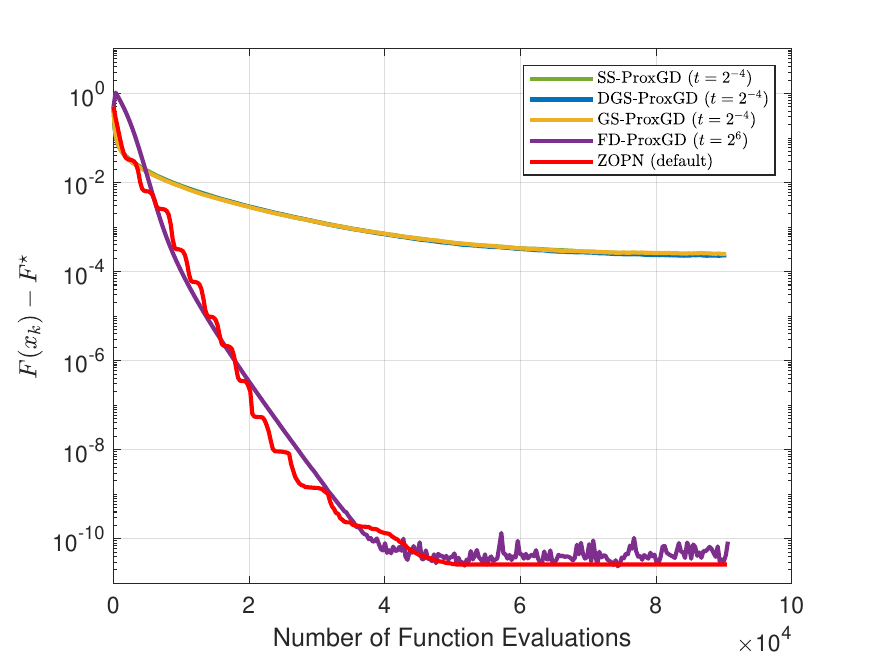}
		\caption{\texttt{w8a}}
		\label{L2logw8a}
	\end{subfigure}
	\caption{Objective value versus NF on the  $\ell_2$-regularized logistic regression problem.}
	\label{fig:L2logNF}
\end{figure}

The results are shown in Figure \ref{fig:L2logNF}.
ZOPN exhibits a significant advantage on  \texttt{a1a}, \texttt{a4a}, \texttt{a9a}, \texttt{heart}, \texttt{ijcnn1},
\texttt{mushrooms} and \texttt{sonar}, while it performs similarly to FD-ProxGD on the rest of datasets.
In general, 
ZOPN can get a high precision solution with less function queries compared with other algorithms.

\subsection{Elastic net-regularized binary classification}\label{subsec:Ebinary}
Consider the elastic net-regularized binary classification problem \cite{IPZOPM2024,ZOPSVRG2019,SZOPM2024}
\begin{equation*}
	\min_{x\in \mathbb{R}^n}\frac1p \sum_{i=1}^p \frac{1}{1+\exp (b_i a_i^{\top}x  )} + \zeta_1  \| x \|_1 + \frac{\zeta_2}{2}  \| x \|^2,
\end{equation*}
where we set $\zeta_1=10^{-3}$ and $\zeta_2=2\times 10^{-3}$.

The results are presented in Figure \ref{fig:BinaryNF}.
ZOPN outperforms FD-ProxGD on \texttt{a1a}, \texttt{a4a}, \texttt{a9a} and \texttt{sonar}, and both of them surpass the performance of SS-ProxGD, DGS-ProxGD and GS-ProxGD.
For other datasets, the performance of ZOPN and FD-ProxGD are similar, and both of them are superior to other algorithms.

\begin{figure}[htbp!]
	\centering
	\begin{subfigure}{0.24\linewidth}
		\centering
		\includegraphics[width=1\linewidth]{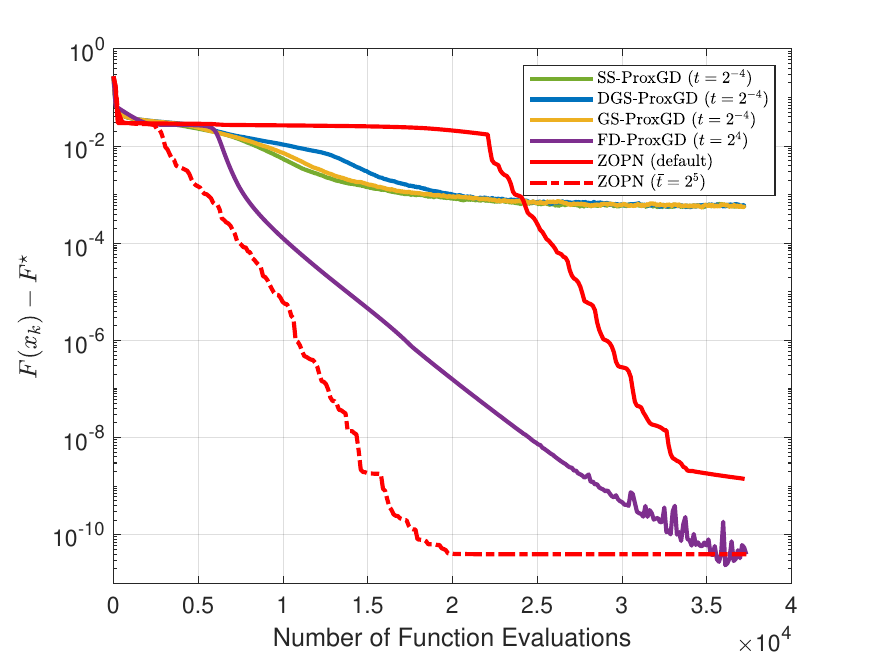}
		\caption{\texttt{a1a}}
		\label{Binarya1a}
	\end{subfigure}
	\centering
	\begin{subfigure}{0.24\linewidth}
		\centering
		\includegraphics[width=1\linewidth]{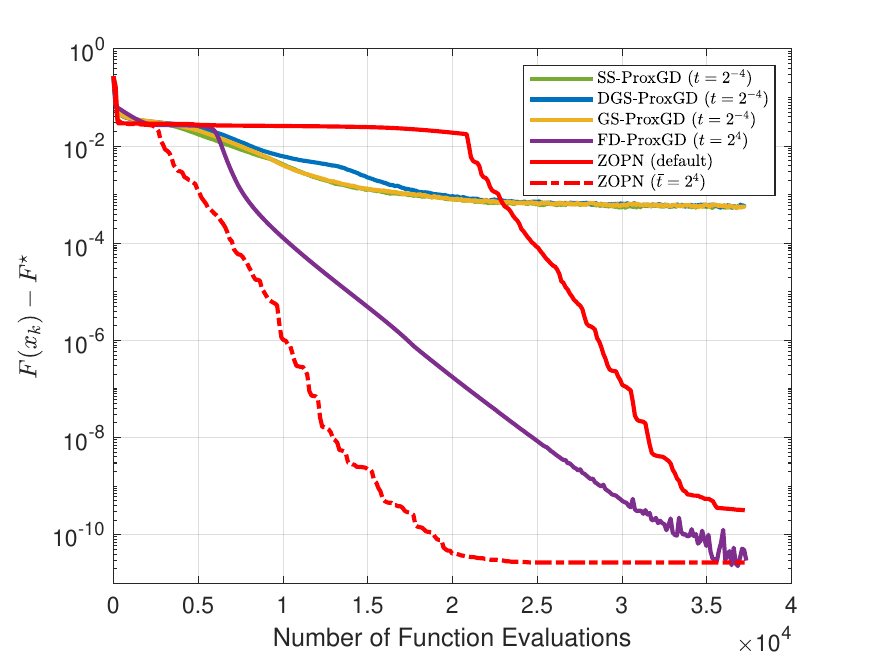}
		\caption{\texttt{a4a}}
		\label{Binarya4a}
	\end{subfigure}
	\centering
	\begin{subfigure}{0.24\linewidth}
		\centering
		\includegraphics[width=1\linewidth]{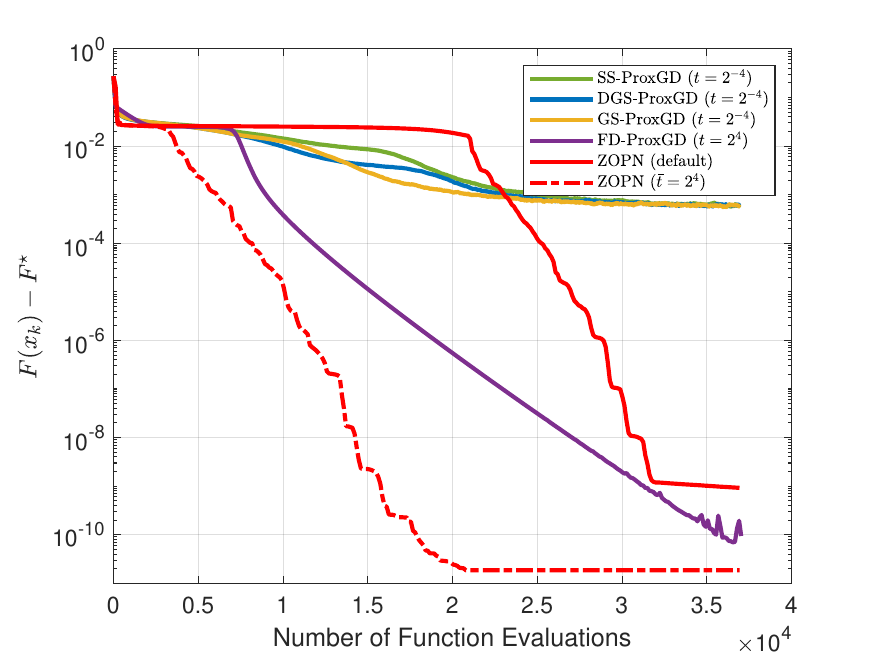}
		\caption{\texttt{a9a}}
		\label{Binarya9a}
	\end{subfigure}
	\centering
	\begin{subfigure}{0.24\linewidth}
		\centering
		\includegraphics[width=1\linewidth]{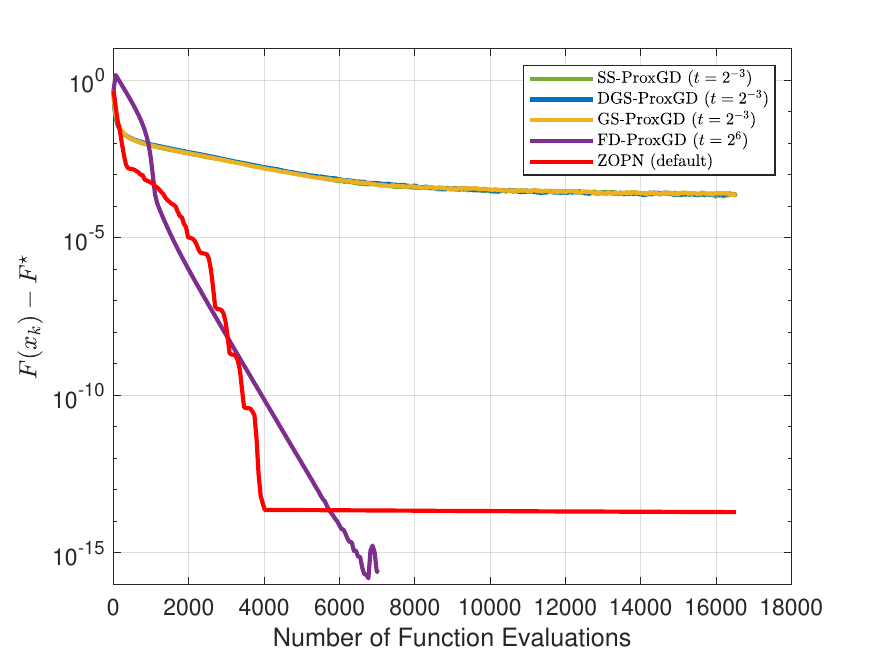}
		\caption{\texttt{covtype}}
		\label{Binarycovtype}
	\end{subfigure}
	
	\centering
	\begin{subfigure}{0.24\linewidth}
		\centering
		\includegraphics[width=1\linewidth]{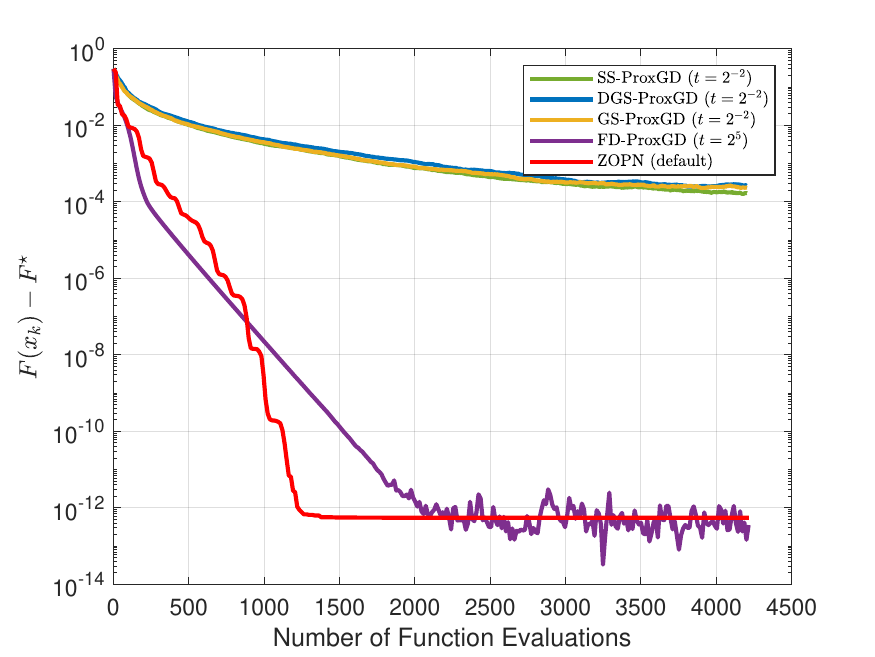}
		\caption{\texttt{heart}}
		\label{Binaryheart}
	\end{subfigure}
	\centering
	\begin{subfigure}{0.24\linewidth}
		\centering
		\includegraphics[width=1\linewidth]{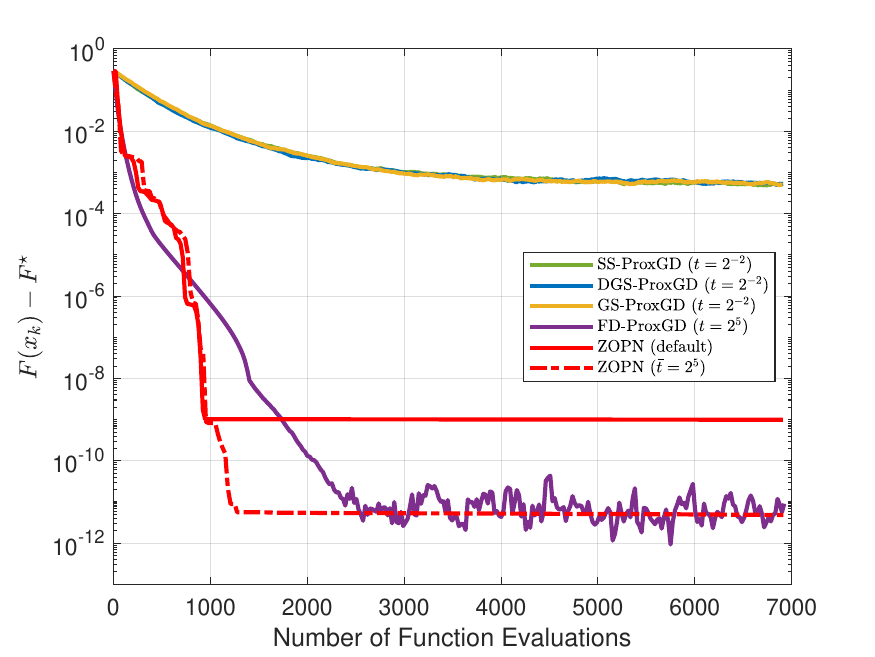}
		\caption{\texttt{ijcnn1}}
		\label{Binaryijcnn}
	\end{subfigure}
	\centering
	\begin{subfigure}{0.24\linewidth}
		\centering
		\includegraphics[width=1\linewidth]{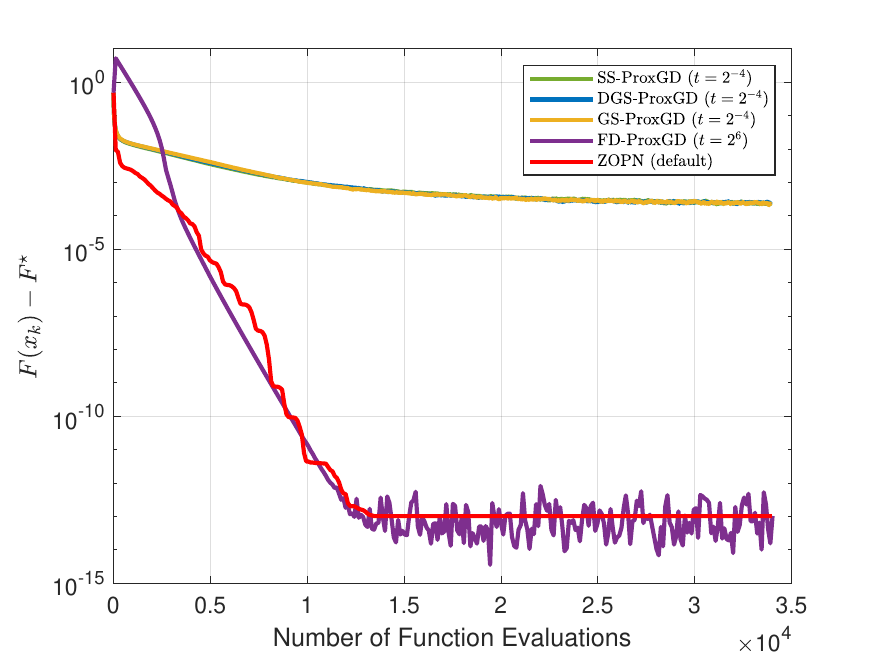}
		\caption{\texttt{mushrooms}}
		\label{Binarymushroom}
	\end{subfigure}
	\centering
	\begin{subfigure}{0.24\linewidth}
		\centering
		\includegraphics[width=1\linewidth]{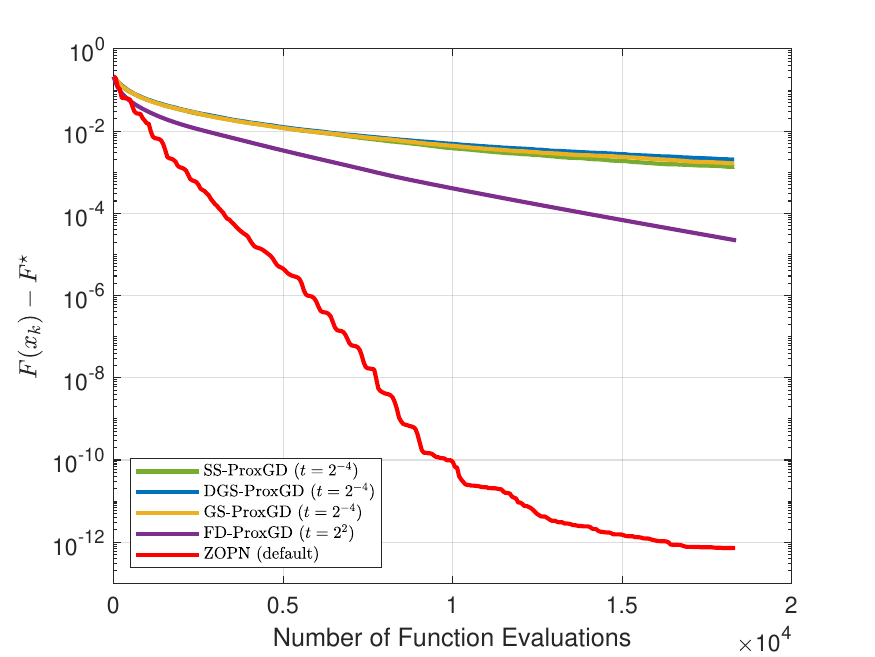}
		\caption{\texttt{sonar}}
		\label{Binarysonar}
	\end{subfigure}
	
	\centering
	\begin{subfigure}{0.24\linewidth}
		\centering
		\includegraphics[width=1\linewidth]{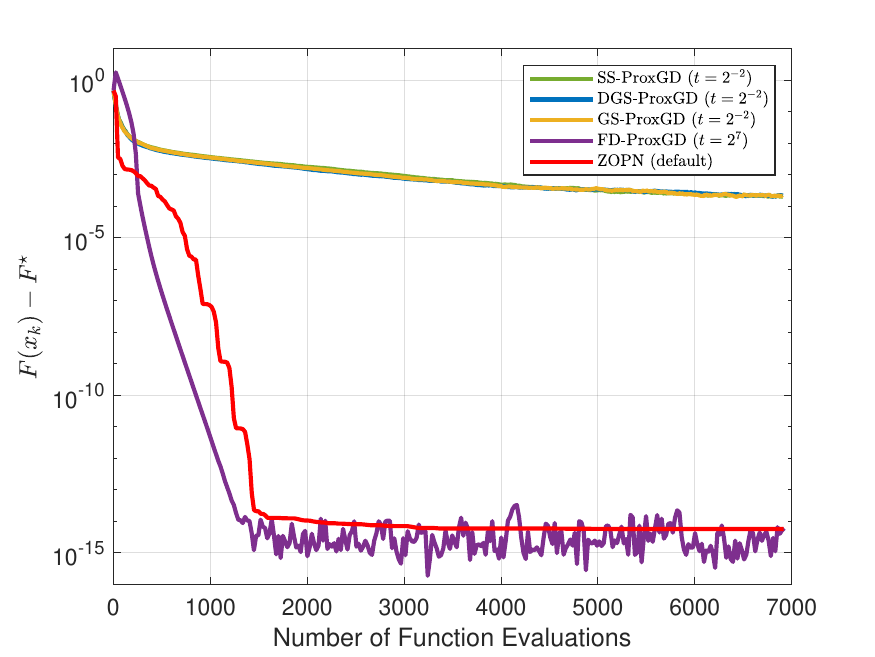}
		\caption{\texttt{svmguide3}}
		\label{Binarysvmguide}
	\end{subfigure}
	\centering
	\begin{subfigure}{0.24\linewidth}
		\centering
		\includegraphics[width=1\linewidth]{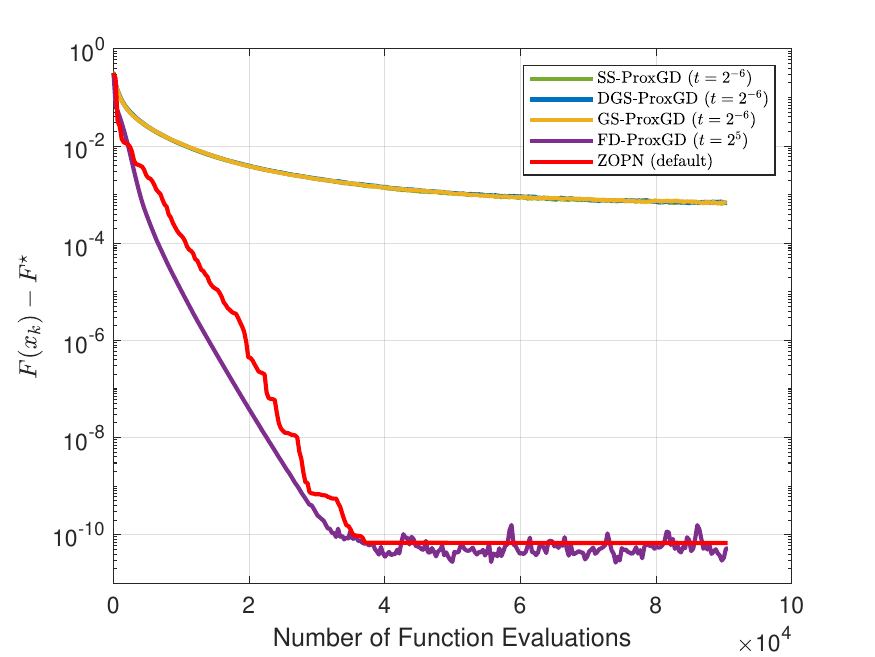}
		\caption{\texttt{w1a}}
		\label{Binaryw1a}
	\end{subfigure}
	\centering
	\begin{subfigure}{0.24\linewidth}
		\centering
		\includegraphics[width=1\linewidth]{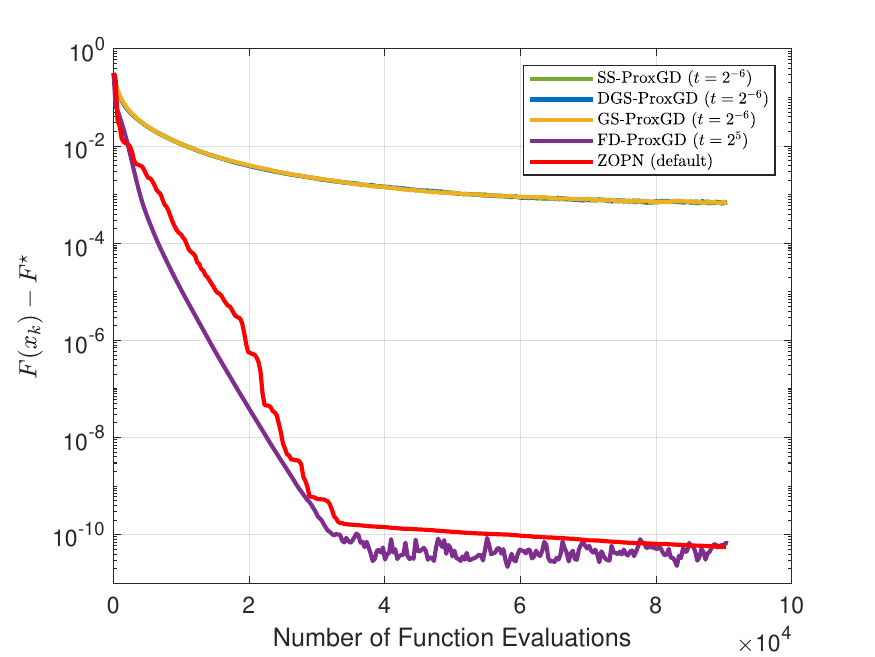}
		\caption{\texttt{w4a}}
		\label{Binaryw4a}
	\end{subfigure}
	\centering
	\begin{subfigure}{0.24\linewidth}
		\centering
		\includegraphics[width=1\linewidth]{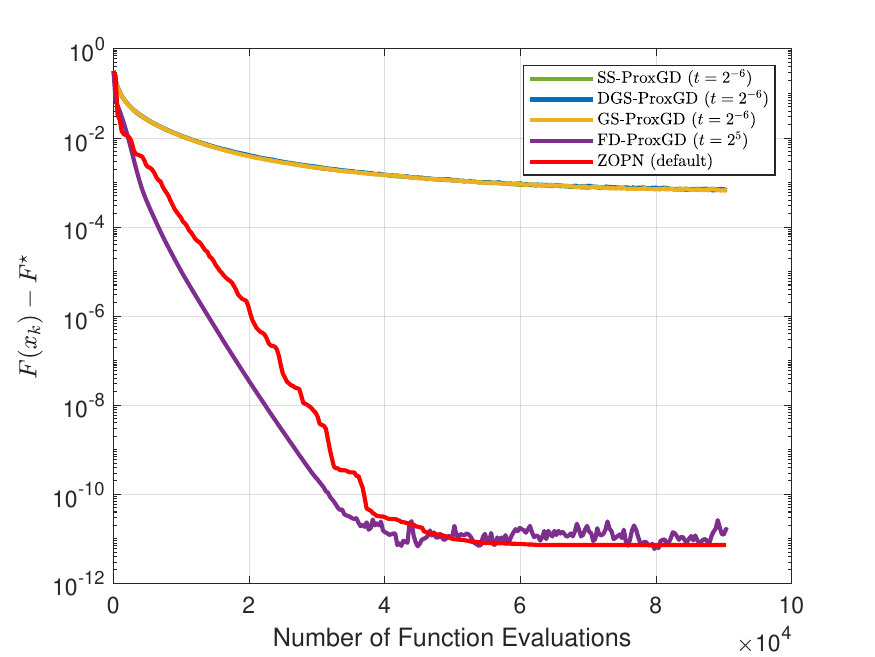}
		\caption{\texttt{w8a}}
		\label{Binaryw8a}
	\end{subfigure}
	\caption{Objective value versus NF on the  elastic net-regularized binary classification problem.}
	\label{fig:BinaryNF}
\end{figure}

\subsection{Nonconvex support vector machine problem}\label{subsec:SVM}
Consider the nonconvex support vector machine (SVM) problem \cite{SPQN2019}
\begin{equation}\label{eq:SVM}
	\min_{x\in \mathbb{R}^n}\frac1p \sum_{i=1}^p  (1 - \tanh (b_i a_i^{\top}x  )  )  + \zeta  \| x \|_1,
\end{equation}
where we set $\zeta=10^{-5}$.

The results are presented in Figure \ref{fig:SVMNF}.
For this nonconvex problem, SS-ProxGD, DGS-ProxGD and GS-ProxGD show slightly different behaviors.
However, ZOPN still performs the best among all algorithms.

\begin{figure}[htbp!]
	\centering
	\begin{subfigure}{0.24\linewidth}
		\centering
		\includegraphics[width=1\linewidth]{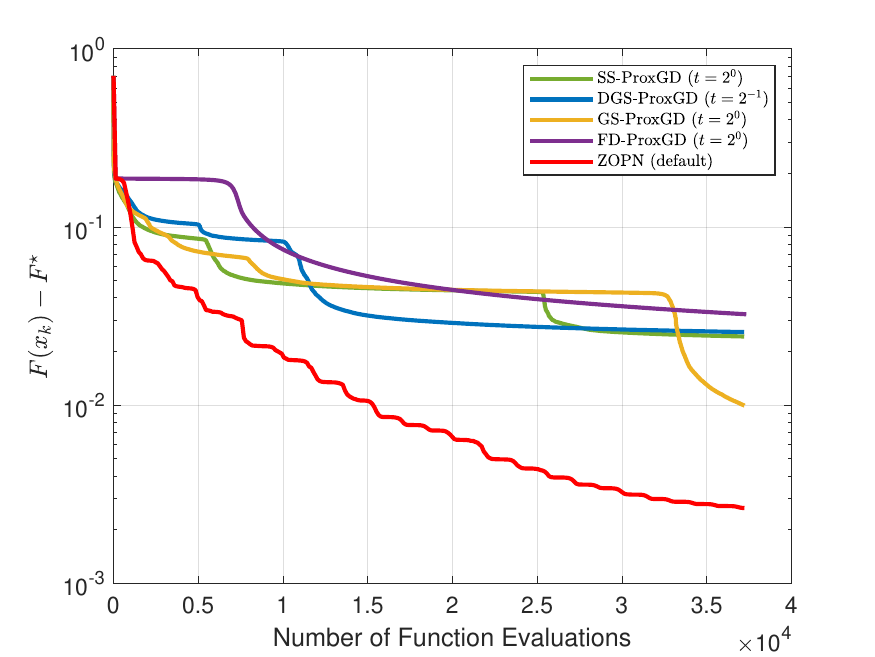}
		\caption{\texttt{a1a}}
		\label{SVMa1a}
	\end{subfigure}
	\centering
	\begin{subfigure}{0.24\linewidth}
		\centering
		\includegraphics[width=1\linewidth]{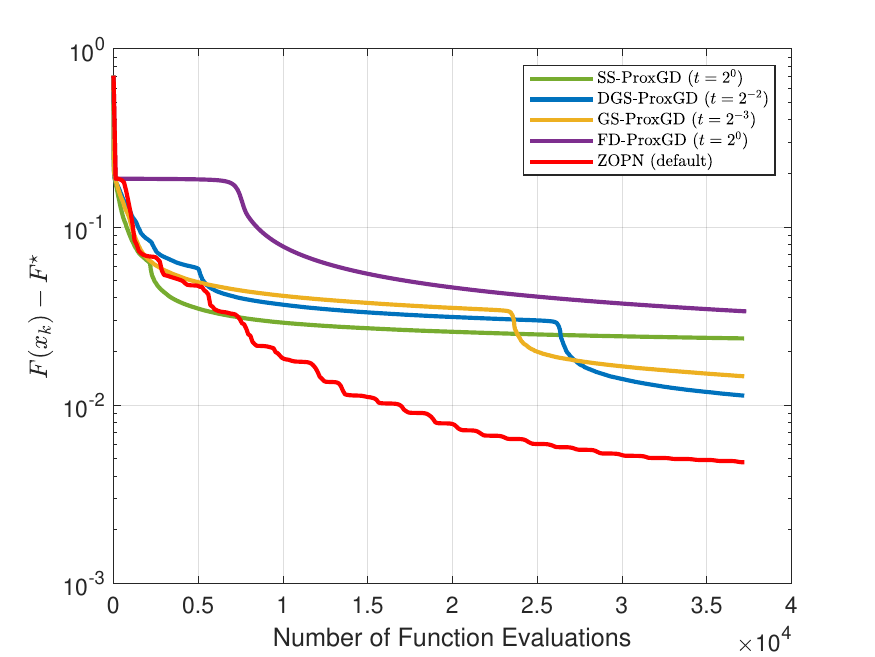}
		\caption{\texttt{a4a}}
		\label{SVMa4a}
	\end{subfigure}
	\centering
	\begin{subfigure}{0.24\linewidth}
		\centering
		\includegraphics[width=1\linewidth]{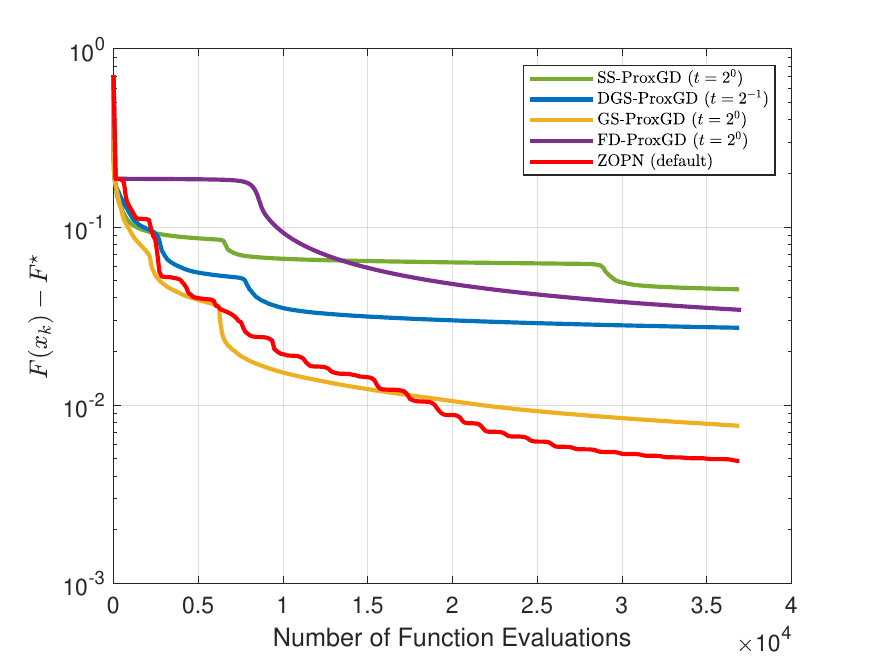}
		\caption{\texttt{a9a}}
		\label{SVMa9a}
	\end{subfigure}
	\centering
	\begin{subfigure}{0.24\linewidth}
		\centering
		\includegraphics[width=1\linewidth]{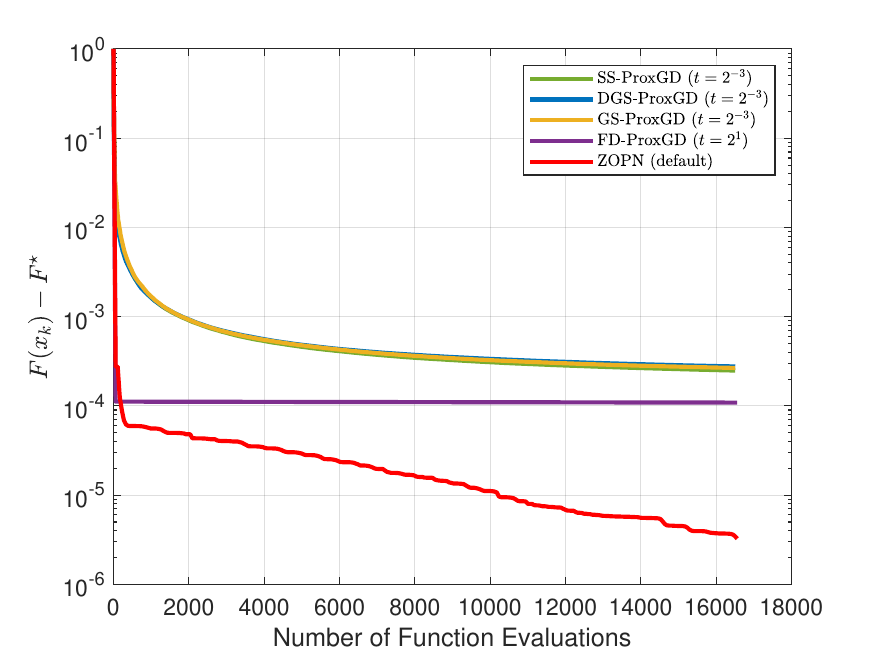}
		\caption{\texttt{covtype}}
		\label{SVMcovtype}
	\end{subfigure}
	
	\centering
	\begin{subfigure}{0.24\linewidth}
		\centering
		\includegraphics[width=1\linewidth]{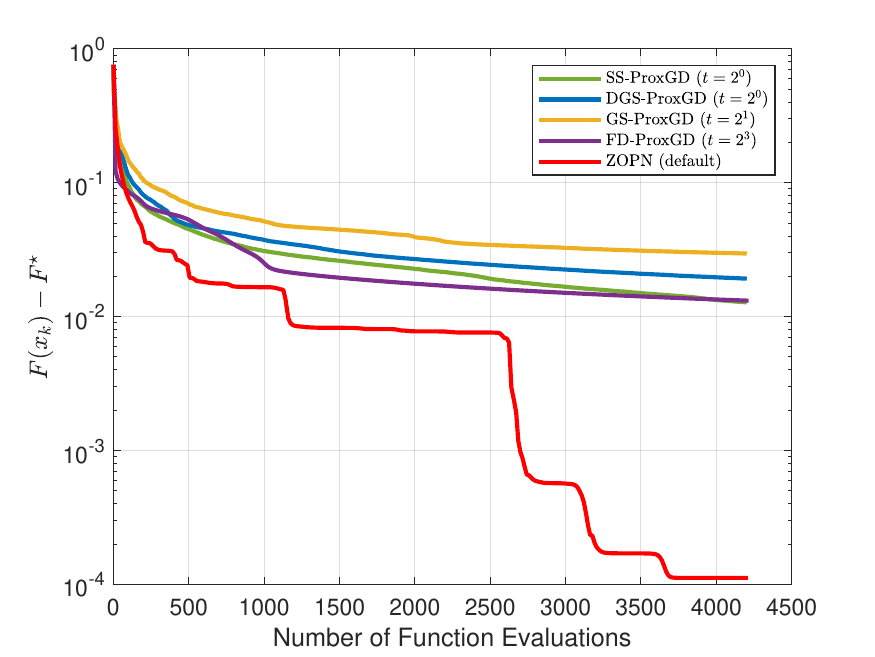}
		\caption{\texttt{heart}}
		\label{SVMheart}
	\end{subfigure}
	\centering
	\begin{subfigure}{0.24\linewidth}
		\centering
		\includegraphics[width=1\linewidth]{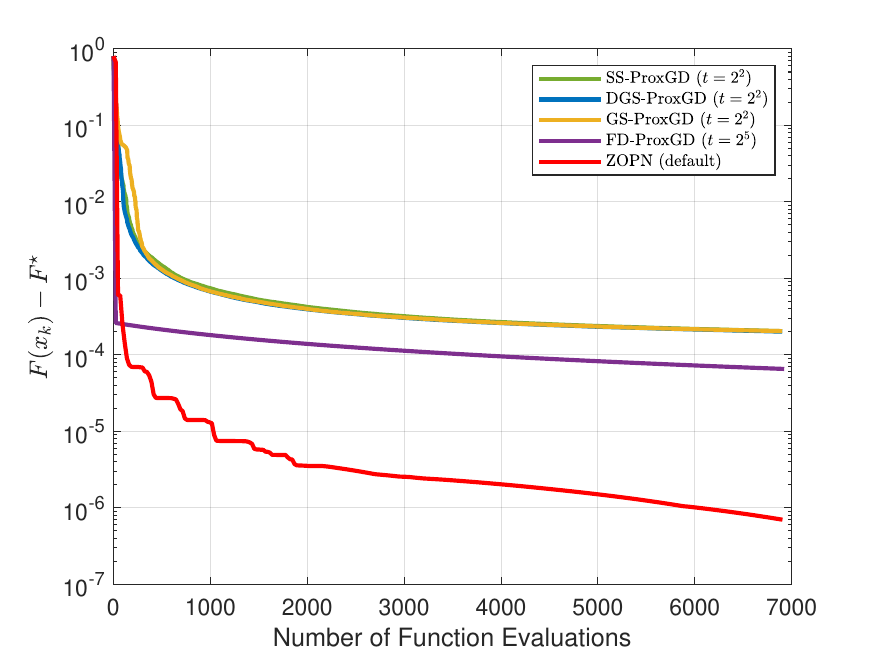}
		\caption{\texttt{ijcnn1}}
		\label{SVMijcnn}
	\end{subfigure}
	\centering
	\begin{subfigure}{0.24\linewidth}
		\centering
		\includegraphics[width=1\linewidth]{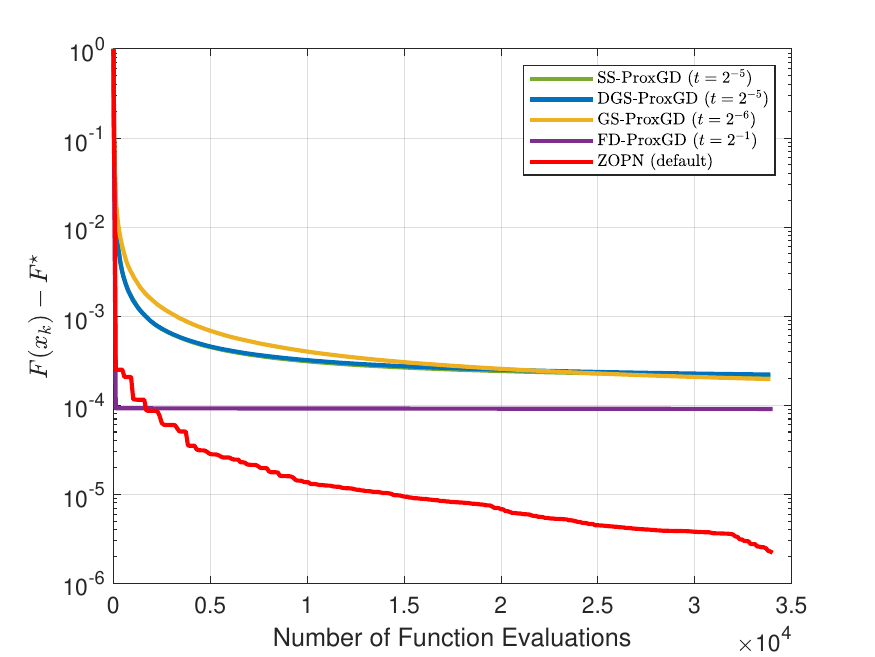}
		\caption{\texttt{mushrooms}}
		\label{SVMmushroom}
	\end{subfigure}
	\centering
	\begin{subfigure}{0.24\linewidth}
		\centering
		\includegraphics[width=1\linewidth]{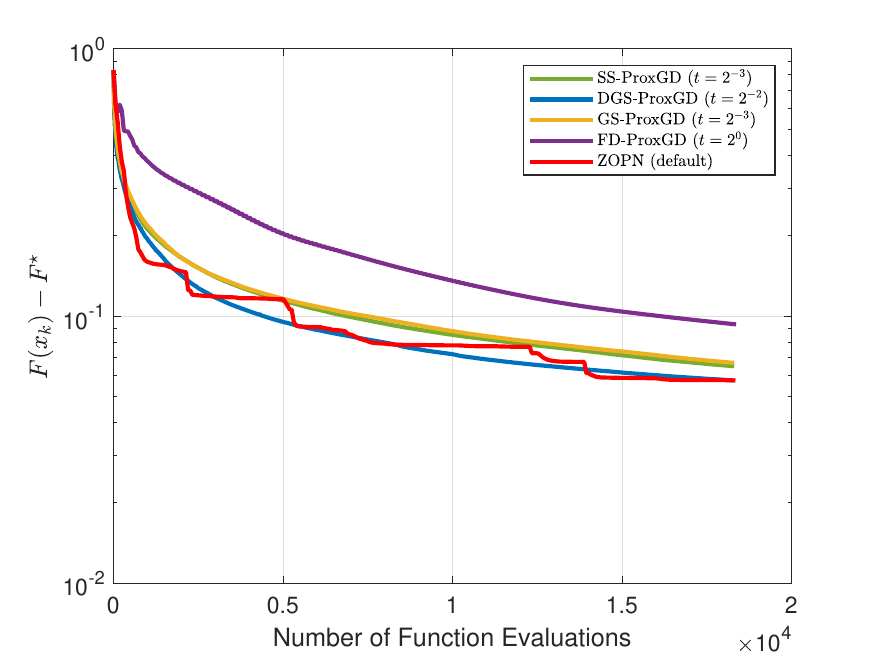}
		\caption{\texttt{sonar}}
		\label{SVMsonar}
	\end{subfigure}
	
	\centering
	\begin{subfigure}{0.24\linewidth}
		\centering
		\includegraphics[width=1\linewidth]{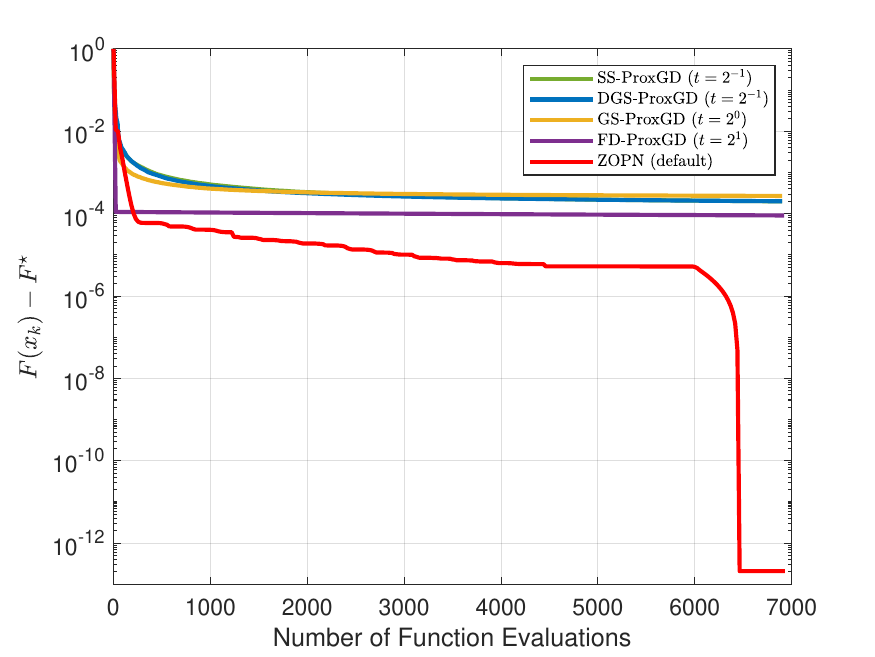}
		\caption{\texttt{svmguide3}}
		\label{SVMsvmguide}
	\end{subfigure}
	\centering
	\begin{subfigure}{0.24\linewidth}
		\centering
		\includegraphics[width=1\linewidth]{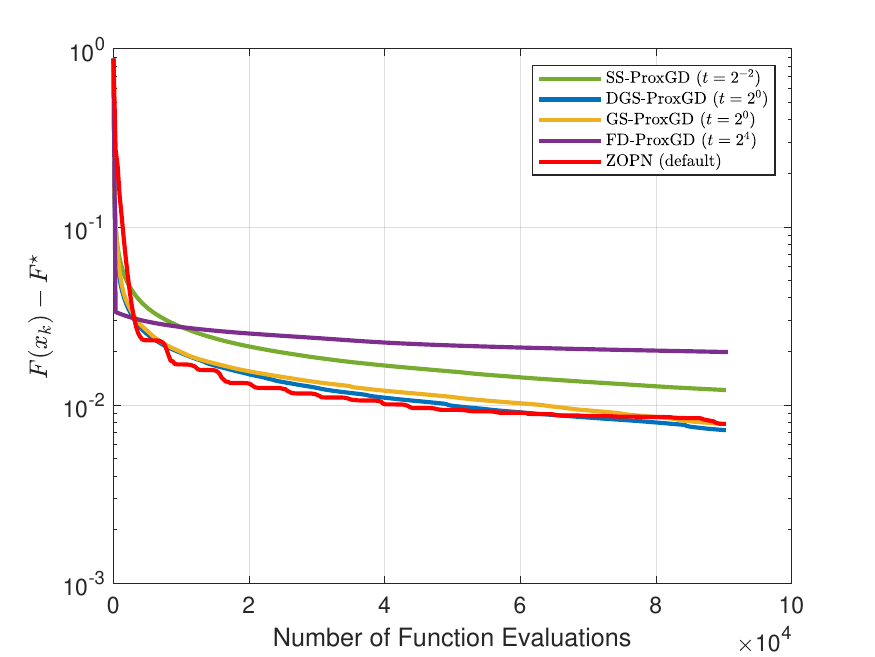}
		\caption{\texttt{w1a}}
		\label{SVMw1a}
	\end{subfigure}
	\centering
	\begin{subfigure}{0.24\linewidth}
		\centering
		\includegraphics[width=1\linewidth]{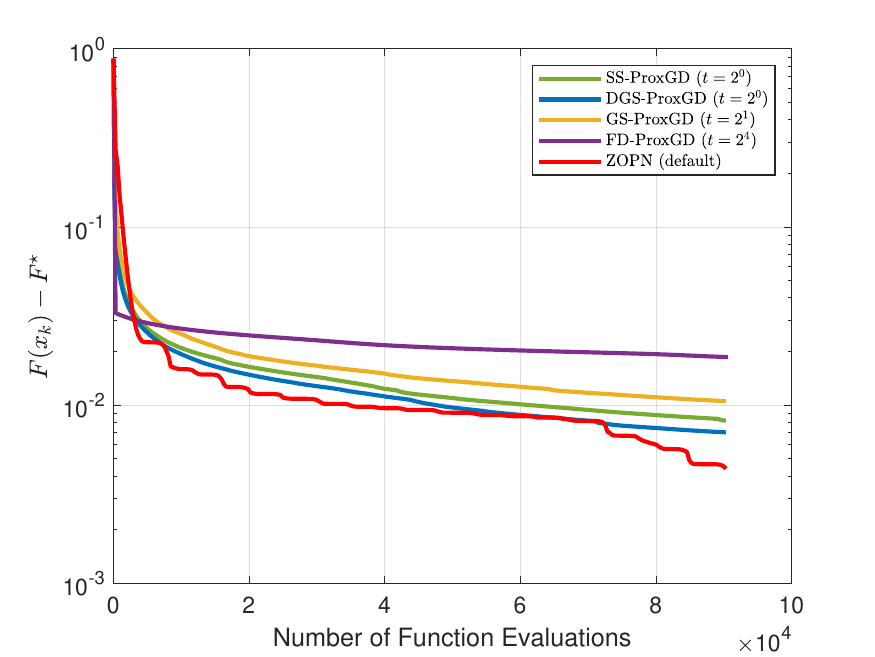}
		\caption{\texttt{w4a}}
		\label{SVMw4a}
	\end{subfigure}
	\centering
	\begin{subfigure}{0.24\linewidth}
		\centering
		\includegraphics[width=1\linewidth]{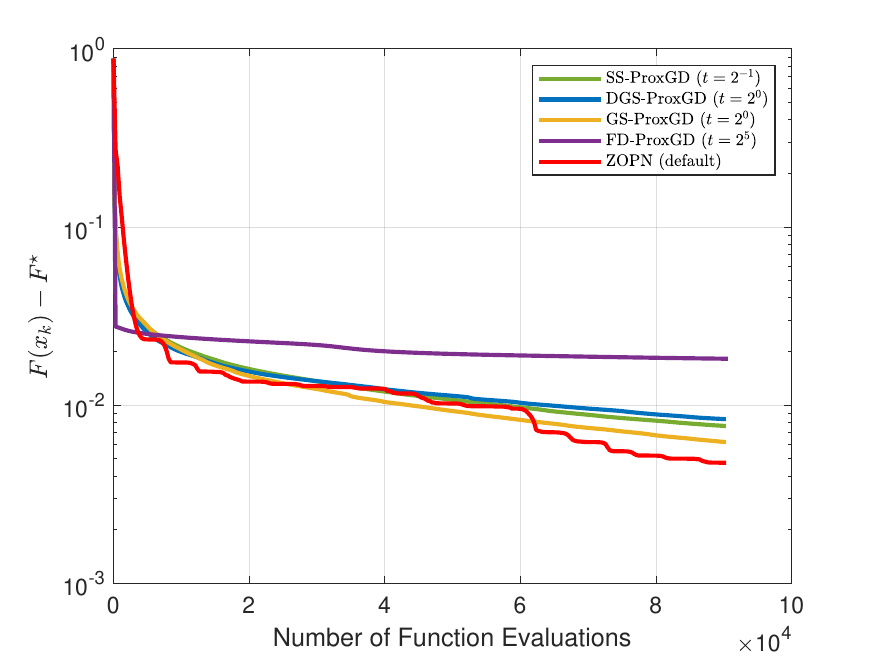}
		\caption{\texttt{w8a}}
		\label{SVMw8a}
	\end{subfigure}
	\caption{Objective value versus NF on the  nonconvex SVM problem.}
	\label{fig:SVMNF}
\end{figure}

\section{Discussions and conclusions}\label{sec:conclusion}

\subsection{Finite difference is preferable to smoothing techniques in gradient estimation for BFGS updating}\label{subsec:FDvsGS}
It is an interesting open problem to identify the suitable gradient estimations for quasi-Newton approaches as Bollapragada and Wild \cite{ZOQN2023} and Bollapragada et al. \cite{Bollapragada2024} pointed out.
In the following, we show why the finite difference seems more preferable than the smoothing techniques
in gradient estimations for BFGS method when the gradient of a function is not available.
All the results about finite difference in this section can be easily extended to any deterministic gradient estimations \eqref{eq:gradienterror}.

Note that if the exact gradient of $f$ is available and the BFGS updating $B_{k-1}$ is positive definite,
then $B_k$ is positive definite if and only if $y_{k-1}^{\top}s_{k-1}>0$,
where $y_{k-1}=\nabla f(x_k)-\nabla f(x_{k-1})$ and $s_{k-1}=x_k-x_{k-1}$.
From this perspective, we show that the forward difference-based curvature condition
can be satisfied if the smoothing parameter is chosen appropriately.

\begin{theorem}\label{thm:FDcurv}
	Suppose that Assumption~\ref{as:problem1} holds and $f$ is $\mu$-strongly convex.
	If $\Delta_k < \frac{\mu}{\sqrt{n}L_f}\min\{\|s_{k-1}\|, \|s_k \|  \}$,
	then we have $( y^{\mathrm{FD}}_{k-1}) ^{\top}s_{k-1}>0$.
\end{theorem}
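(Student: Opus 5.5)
The plan is to mimic the curvature estimate \eqref{eq:curvpos} from the proof of Theorem~\ref{thm:bfgs}, using the explicit error bound of the forward difference in place of the abstract constant $\kappa_{\mathrm{eg}}$. Here $y^{\mathrm{FD}}_{k-1}=g^{\mathrm{FD}}_k-g^{\mathrm{FD}}_{k-1}$, where $g^{\mathrm{FD}}_j:=\sum_{i=1}^n \frac{f(x_j+\Delta_j e_i)-f(x_j)}{\Delta_j}e_i$.

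The first step is the standard componentwise estimate. By $L_f$-Lipschitz continuity of $\nabla f$ (Assumption~\ref{as:problem1}) and the descent lemma,
\begin{equation*}
\left|\frac{f(x_j+\Delta_j e_i)-f(x_j)}{\Delta_j}-[\nabla f(x_j)]_i\right|\le \frac{L_f}{2}\Delta_j .
\end{equation*}
Summing the squares over $i\in[n]$ gives $\|g^{\mathrm{FD}}_j-\nabla f(x_j)\|\le \frac{\sqrt n L_f}{2}\Delta_j$. This is an instance of \eqref{eq:gradienterror} with $\kappa_{\mathrm{eg}}=\sqrt n L_f/2$.

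Next, split $y^{\mathrm{FD}}_{k-1}$ exactly as in \eqref{eq:curvpos}:
\begin{equation*}
(y^{\mathrm{FD}}_{k-1})^{\top}s_{k-1}=(\nabla f(x_k)-\nabla f(x_{k-1}))^{\top}s_{k-1}+(g^{\mathrm{FD}}_k-\nabla f(x_k))^{\top}s_{k-1}-(g^{\mathrm{FD}}_{k-1}-\nabla f(x_{k-1}))^{\top}s_{k-1}.
\end{equation*}
Strong convexity bounds the first term below by $\mu\|s_{k-1}\|^2$. Cauchy--Schwarz and the error bound control the remaining two terms, giving
\begin{equation*}
(y^{\mathrm{FD}}_{k-1})^{\top}s_{k-1}\ge \|s_{k-1}\|\left(\mu\|s_{k-1}\|-\frac{\sqrt n L_f}{2}(\Delta_k+\Delta_{k-1})\right).
\end{equation*}

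The last step, and the only delicate point, is to make the bracket strictly positive. This requires $\Delta_k<\mu\|s_{k-1}\|/(\sqrt n L_f)$ and $\Delta_{k-1}<\mu\|s_{k-1}\|/(\sqrt n L_f)$. The first inequality is directly contained in the hypothesis through the $\|s_{k-1}\|$ entry of the minimum. For the second, I read the hypothesis as imposed at every iteration index, so that at index $k-1$ it yields $\Delta_{k-1}<\frac{\mu}{\sqrt n L_f}\min\{\|s_{k-2}\|,\|s_{k-1}\|\}\le \frac{\mu}{\sqrt n L_f}\|s_{k-1}\|$. This reading is also why both $\|s_{k-1}\|$ and $\|s_k\|$ appear in the statement: each step $s_j$ must dominate the radius at both of its endpoints. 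Alternatively, if the sampling radius is nonincreasing as in \eqref{eq:radiusbfgs}, one may replace $\Delta_{k-1}$ by the same bound directly.

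Granting these two bounds, the bracket exceeds $\mu\|s_{k-1}\|-\mu\|s_{k-1}\|=0$ strictly. Hence $(y^{\mathrm{FD}}_{k-1})^{\top}s_{k-1}>0$, provided $s_{k-1}\neq 0$, which the strict hypothesis implicitly guarantees. I expect the main obstacle to be fixing this indexing convention for $\Delta_{k-1}$ cleanly; the remaining estimates are routine.
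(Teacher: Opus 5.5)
Your proof is correct and is essentially the paper's argument: the same three-term split of $(y^{\mathrm{FD}}_{k-1})^{\top}s_{k-1}$, strong convexity plus Cauchy--Schwarz with $\kappa_{\mathrm{eg}}=\sqrt{n}L_f/2$, and then $\Delta_k+\Delta_{k-1}<\frac{2\mu}{\sqrt{n}L_f}\|s_{k-1}\|$ to get strict positivity. The paper also takes the bound on $\Delta_{k-1}$ from the hypothesis at index $k-1$; it does so silently, and you state it explicitly.
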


\begin{proof}
	By Assumption~\ref{as:ZOPN-g} with $\kappa_{\mathrm{eg}}=\sqrt{n}L_f/2$ and the $\mu$-strong convexity of $f$,
	\begin{equation*}
		\begin{aligned}
			( y^{\mathrm{FD}}_{k-1}) ^{\top}s_{k-1} &=  \left( g^{\mathrm{FD}}_k  -g^{\mathrm{FD}}_{k-1}  \right) ^{\top}s_{k-1} \\
			&=  ( \nabla f(x_k)-\nabla f(x_{k-1}) ) ^{\top}s_{k-1}  + \left(g^{\mathrm{FD}}_k  -\nabla f(x_k)+\nabla f(x_{k-1})- g^{\mathrm{FD}}_{k-1}  \right) ^{\top}s_{k-1} \\
			&\ge \mu \|s_{k-1} \|^2   - \Big( \left\|g^{\mathrm{FD}}_k  -\nabla f(x_k) \right\|  + \left\|\nabla f(x_{k-1})- g^{\mathrm{FD}}_{k-1}   \right\| \Big)  \|s_{k-1} \| \\
			&\ge  \|s_{k-1} \|  \left(\mu \|s_{k-1} \| - \frac{\sqrt{n}L_f}2 (\Delta_k + \Delta_{k-1}) \right ) \\
			&> \|s_{k-1}\|\left( \mu\|s_{k-1}\| - \frac{\sqrt{n}L_f}2 \frac{2\mu}{\sqrt{n}L_f}\|s_{k-1}\| \right)=0.
		\end{aligned}
	\end{equation*}
	This completes the proof.
\end{proof}

Theorem \ref{thm:FDcurv} is similar to the result given in \cite[Section 2.4]{ZOQN2023}.
It gives an upper bound on the sampling radius such that the forward difference-based curvature condition can be satisfied.

Next we discuss the curvature conditions based on Gaussian smoothing and spherical smoothing.
We first give two auxiliary results derived from \cite[Theorems 2.6 and 2.11]{GE2022}.

\begin{lemma}\label{lem:GSerror}
	Suppose that Assumption \ref{as:problem1} holds. Consider the Gaussian smoothing
	\begin{equation}\label{equ:GS}
		g^{\mathrm{GS}} := g^{\mathrm{GS}}_{\Delta}(x) =  \frac{1}{N}\sum_{i=1}^{N}\frac{f(x+\Delta u_i)-f(x)}{\Delta}u_i,
	\end{equation}
	where $u_i\sim \mathcal{N}(0,I)$. For any $x\in \mathbb{R}^n$ and $w>0$, if
	\begin{equation}\label{equ:NFGS}
		N \ge \frac{3n}{\nu w^2}\left[ 3\|\nabla f(x)\|^2+\frac{L_f^2\Delta^2}{4}(n+2)(n+4)\right],
	\end{equation}
	then
	\begin{equation*}
		\left\| g^{\mathrm{GS}} - \nabla f(x) \right\| \le \sqrt{n}L_f \Delta + w
	\end{equation*}
	holds with probability at least $1-\nu$.
\end{lemma}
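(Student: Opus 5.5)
The bound splits into a deterministic bias part $\sqrt{n}L_f\Delta$ and a stochastic part $w$, controlled through the smoothed function. Let $f_\Delta(x):=\mathbb{E}_{u\sim\mathcal{N}(0,I)}[f(x+\Delta u)]$. The standard Gaussian smoothing identities give $\mathbb{E}[g^{\mathrm{GS}}]=\nabla f_\Delta(x)$. By the triangle inequality,
\begin{equation*}
\|g^{\mathrm{GS}}-\nabla f(x)\| \le \|\nabla f_\Delta(x)-\nabla f(x)\| + \|g^{\mathrm{GS}}-\nabla f_\Delta(x)\|.
\end{equation*}
The first term is deterministic. Under Assumption~\ref{as:problem1} ($L_f$-Lipschitz gradient) it is bounded by $\frac{L_f\Delta}{2}(n+3)^{3/2}$ in Nesterov--Spokoiny, or by $\sqrt{n}L_f\Delta$ in the sharper form of \cite[Theorem~2.6]{GE2022}. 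I would quote the latter directly rather than re-derive it, since that is the constant in the statement.

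For the stochastic term, I would bound the second moment of a single sample $X_i:=\frac{f(x+\Delta u_i)-f(x)}{\Delta}u_i$. Write $f(x+\Delta u)-f(x)=\Delta\nabla f(x)^{\top}u+R$ with $|R|\le \frac{L_f\Delta^2}{2}\|u\|^2$. Then use $(a+b)^2\le 2a^2+2b^2$ together with the Gaussian moment identities $\mathbb{E}[(\nabla f(x)^{\top}u)^2\|u\|^2]=(n+2)\|\nabla f(x)\|^2$ and $\mathbb{E}\|u\|^6=n(n+2)(n+4)$. This gives
\begin{equation*}
\mathbb{E}\|X_i-\nabla f_\Delta(x)\|^2\le \mathbb{E}\|X_i\|^2 \le 3n\Big[3\|\nabla f(x)\|^2+\tfrac{L_f^2\Delta^2}{4}(n+2)(n+4)\Big],
\end{equation*}
after crudely absorbing $n+2\le 3n$ and $n(n+2)(n+4)\le n\cdot(n+2)(n+4)$. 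Independence then gives $\mathbb{E}\|g^{\mathrm{GS}}-\nabla f_\Delta(x)\|^2\le \frac{1}{N}\mathbb{E}\|X_1\|^2$.

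Finally I would apply Chebyshev/Markov: $\Pr(\|g^{\mathrm{GS}}-\nabla f_\Delta(x)\|>w)\le \frac{\mathbb{E}\|X_1\|^2}{Nw^2}$. This is at most $\nu$ exactly when $N$ satisfies \eqref{equ:NFGS}. Combining with the bias bound yields the claim with probability at least $1-\nu$.

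The main obstacle is matching the exact constants $3n$, $3$ and $\frac14(n+2)(n+4)$ in \eqref{equ:NFGS}. The delicate step is the variance bound, where one must choose the Young splitting weights so that they reproduce these constants. If my own splitting gives slightly different constants, I would instead cite \cite[Theorem~2.11]{GE2022}, which states this variance/sample-size bound verbatim, and keep the argument above as its justification.
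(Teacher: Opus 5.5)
Your argument is correct. It is also more self-contained than the paper's treatment: the paper gives no proof of this lemma and imports it from \cite{GE2022}, which by its own attribution is Theorem~2.6 there, with the function-noise terms dropped. You instead reconstruct the standard bias, second-moment and Chebyshev argument behind that result.

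Two remarks would tighten your write-up.

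First, the step you flag as the main obstacle is not one. You only need an upper bound on $\mathbb{E}\|X_1\|^2$ that is at most the bracket in \eqref{equ:NFGS}; you do not need to reproduce its constants. Your plain split $(a+b)^2\le 2a^2+2b^2$ already gives
\[
\mathbb{E}\|X_1\|^2\le 2(n+2)\|\nabla f(x)\|^2+\tfrac{L_f^2\Delta^2}{2}\,n(n+2)(n+4)\le 3n\Big[3\|\nabla f(x)\|^2+\tfrac{L_f^2\Delta^2}{4}(n+2)(n+4)\Big],
\]
since $2(n+2)\le 9n$ and $\tfrac12\le\tfrac34$. The looser form in \eqref{equ:NFGS} presumably comes from a three-term split in \cite{GE2022} that also carries a noise term, which vanishes here. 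So no fallback citation is needed. In any case, the theorem you propose to fall back on, Theorem~2.11 of \cite{GE2022}, is (judging from the paper's attribution) the Bernstein-type spherical-smoothing result behind Lemma~\ref{lem:SSerror}, not the Gaussian one.

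Second, you do not need to quote the bias bound either. Differentiating under the expectation gives $\nabla f_\Delta(x)=\mathbb{E}[\nabla f(x+\Delta u)]$. Hence $\|\nabla f_\Delta(x)-\nabla f(x)\|\le L_f\Delta\,\mathbb{E}\|u\|\le L_f\Delta\sqrt{n}$ by Jensen.

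With these two observations your proof is complete and independent of the reference.
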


Lemma~\ref{lem:GSerror} indicates that the difference between the exact gradient and the approximate gradient via  Gaussian smoothing is bounded by a linear combination of the sampling radius and the error due to the sample average approximation with high probability if the number of samples satisfies~\eqref{equ:NFGS}.
A similar result can also be obtained when the gradient is approximated via the spherical smoothing.

\begin{lemma}\label{lem:SSerror}
	Suppose that Assumption~\ref{as:problem1} holds. Consider the spherical smoothing
	\begin{equation}\label{equ:SS}
		g^{\mathrm{SS}}:= g^{\mathrm{SS}}_{\Delta}(x) =
		\frac{n}{N}\sum_{i=1}^{N}\frac{f(x+\Delta u_i)-f(x)}{\Delta}u_i,
	\end{equation}
	where $u_i\sim \mathcal{U}(\mathcal{S}(0,1))$.  For any $x\in \mathbb{R}^n$ and $w>0$, if
	\begin{equation}\label{equ:NFSS}
		N \ge \left[\frac{6n^2}{w^2}\left(\frac{\|\nabla f(x)\|^2}n+\frac{L_f^2\Delta^2}4\right)+\frac{2n}{3w} (2\|\nabla f(x)\|+L_f\Delta )\right]\log\frac{n+1}\nu,
	\end{equation}
	then
	\begin{equation*}
		\left\| g^{\mathrm{SS}}  - \nabla f(x) \right\| \le L_f \Delta + w
	\end{equation*}
	holds with probability at least $1-\nu$.
\end{lemma}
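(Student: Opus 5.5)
The argument splits the error into a deterministic bias and a sample-average deviation:
\begin{equation*}
g^{\mathrm{SS}} - \nabla f(x) = \bigl(g^{\mathrm{SS}} - \mathbb{E}[g^{\mathrm{SS}}]\bigr) + \bigl(\mathbb{E}[g^{\mathrm{SS}}] - \nabla f(x)\bigr).
\end{equation*}
First I would control the bias. Let $f_\Delta(x) := \mathbb{E}_{v\sim\mathcal{U}(\mathcal{B}(0,1))} f(x+\Delta v)$ be the ball-smoothed function. The divergence theorem gives the standard identity $\mathbb{E}_{u}\bigl[\tfrac{n}{\Delta}(f(x+\Delta u)-f(x))u\bigr] = \nabla f_\Delta(x)$, since $\mathbb{E}[u]=0$. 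Because $\nabla f$ is $L_f$-Lipschitz, $\nabla f_\Delta(x) = \mathbb{E}_v \nabla f(x+\Delta v)$ and $\|v\|\le 1$, so
\begin{equation*}
\|\nabla f_\Delta(x) - \nabla f(x)\| \le L_f\Delta .
\end{equation*}
This is the $L_f\Delta$ term in the statement.

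Second, I would bound the deviation with the vector Bernstein inequality, in its matrix-Bernstein form for $n\times 1$ matrices, which is where the $\log\frac{n+1}{\nu}$ factor comes from. Set $X_i := \frac{n}{N}\bigl[\frac{f(x+\Delta u_i)-f(x)}{\Delta}u_i - \mathbb{E}(\cdot)\bigr]$. The two inputs are a uniform bound on $\|X_i\|$ and a bound on the variance proxy $\sum_i\mathbb{E}\|X_i\|^2$. For the almost-sure bound, Taylor's theorem gives
\begin{equation*}
f(x+\Delta u)-f(x) = \Delta\nabla f(x)^{\top}u + e, \qquad |e|\le \tfrac{L_f\Delta^2}{2}.
\end{equation*}
Hence $\|n\frac{f(x+\Delta u)-f(x)}{\Delta}u\| \le n(\|\nabla f(x)\| + L_f\Delta/2)$. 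The mean obeys the same bound, so $\|X_i\|\le \frac{n}{N}(2\|\nabla f(x)\|+L_f\Delta)$.

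For the variance, I would use $\mathbb{E}[(\nabla f(x)^{\top}u)^2] = \|\nabla f(x)\|^2/n$ for $u$ uniform on the sphere, together with $(a+b)^2\le 2a^2+2b^2$. This gives
\begin{equation*}
\mathbb{E}\|X_i\|^2 \le \frac{n^2}{N^2}\cdot 2\Bigl(\frac{\|\nabla f(x)\|^2}{n}+\frac{L_f^2\Delta^2}{4}\Bigr).
\end{equation*}
Summing over $i$ yields $\sigma^2 \le \frac{2n^2}{N}\bigl(\frac{\|\nabla f(x)\|^2}{n}+\frac{L_f^2\Delta^2}{4}\bigr)$.

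Third, matrix Bernstein gives
\begin{equation*}
\mathbb{P}\bigl(\|\textstyle\sum_i X_i\|\ge w\bigr)\le (n+1)\exp\Bigl(\frac{-w^2/2}{\sigma^2 + Rw/3}\Bigr).
\end{equation*}
I would require the right-hand side to be at most $\nu$, which amounts to $\sigma^2 + Rw/3 \le w^2/(2\log\frac{n+1}{\nu})$. Substituting the bounds on $\sigma^2$ and $R$ and solving for $N$ gives a sufficient condition of exactly the form \eqref{equ:NFSS}. The constants $6$ and $2/3$ arise after bounding each of the two contributions separately by half, with a little slack. On the complementary event, the triangle inequality combined with the bias bound yields $\|g^{\mathrm{SS}}-\nabla f(x)\|\le L_f\Delta+w$.

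The main obstacle I expect is bookkeeping of constants. The variance and range constants have to line up with the stated $6n^2/w^2$ and $2n/(3w)$, which depends on whether centering is charged to the range bound (a factor of $2$) and how the exponent inequality is split. Since the lemma is quoted from \cite[Theorem 2.11]{GE2022}, I would follow that convention and, if needed, loosen intermediate estimates so that the stated $N$ suffices.
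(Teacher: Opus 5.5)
Your proof is correct and is essentially the argument behind the result the paper cites from \cite{GE2022} (the paper only invokes it and gives no proof of its own). That argument combines an $L_f\Delta$ bias bound for the ball-smoothed gradient with the rectangular matrix Bernstein inequality with dimension factor $n+1$, using $R=\frac{n}{N}(2\|\nabla f(x)\|+L_f\Delta)$ and $\sigma^2\le\frac{2n^2}{N}\bigl(\frac{\|\nabla f(x)\|^2}{n}+\frac{L_f^2\Delta^2}{4}\bigr)$. One correction on the bookkeeping: solving $\sigma^2+Rw/3\le w^2/(2\log\frac{n+1}{\nu})$ directly gives constants $4n^2/w^2$ and $2n/(3w)$, so the stated $6n^2/w^2$ suffices with slack. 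Splitting the two terms by halves, as you suggest, would instead require $8n^2/w^2$ and $4n/(3w)$, which is \emph{not} implied by \eqref{equ:NFSS}.
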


Based on Lemmas~\ref{lem:GSerror} and \ref{lem:SSerror},
we show that the curvature conditions based on Gaussian smoothing and spherical smoothing can be satisfied
with high probability under certain conditions.

Let $\theta \in  ( 0,1/2 )$ and $\lambda \in (0,1)$ close to $0$  be constants.
Denote
\begin{equation*}
	g^{\mathrm{GS}}_k:=\frac{1}{N_k}\sum_{i=1}^{N_k}\frac{f(x_k+\Delta_k  u_i)-f(x_k)}{\Delta_k}u_i,
\end{equation*}
where $u_i\sim \mathcal{N}(0,I)$, and
$	y^{\mathrm{GS}}_{k-1}:=g^{\mathrm{GS}}_k  - g^{\mathrm{GS}}_{k-1} $.

\begin{theorem}\label{thm:GScurv}
	Suppose that Assumption~\ref{as:problem1} holds and $f$ is $\mu$-strongly convex.
	If we take
	\begin{equation*}
		\begin{aligned}
			\Delta_k &= \frac{\theta \mu }{\sqrt{n}L_f} \min\{\|s_{k-1}\|, \|s_k \|  \}, \nonumber \\
			N_k & \ge \frac{9n}{\nu \mu^2(1-\lambda)^2\left(\frac{1}{2}-\theta \right)^2} \left( \frac{\|\nabla f(x_k)\|}{\min\{\|s_{k-1}\|, \|s_k \|  \}} \right)^2 + \frac{3\theta^2 (n+2)(n+4)}{4\nu (1-\lambda)^2\left(\frac12 - \theta \right)^2 } 
		\end{aligned}
	\end{equation*}
	for all $k$,
	then
	$( y^{\mathrm{GS}}_{k-1}) ^{\top}s_{k-1}>0$ holds 	with probability at least $1-\nu$.
\end{theorem}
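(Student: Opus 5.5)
The plan is to follow the proof of Theorem~\ref{thm:FDcurv}, replacing the deterministic bound~\eqref{eq:gradienterror} with the high-probability bound of Lemma~\ref{lem:GSerror}, applied at $x_k$ and at $x_{k-1}$. Write $m_k:=\min\{\|s_{k-1}\|,\|s_k\|\}$. Exactly as in Theorem~\ref{thm:FDcurv}, $\mu$-strong convexity and Cauchy--Schwarz give
\begin{equation*}
	(y^{\mathrm{GS}}_{k-1})^{\top}s_{k-1} \ge \|s_{k-1}\|\Big(\mu\|s_{k-1}\| - \|g^{\mathrm{GS}}_k-\nabla f(x_k)\| - \|g^{\mathrm{GS}}_{k-1}-\nabla f(x_{k-1})\|\Big).
\end{equation*}
It therefore suffices to show that each of the two error terms is strictly below $\mu\|s_{k-1}\|/2$.

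For the $k$-th term, I would apply Lemma~\ref{lem:GSerror} at $x=x_k$ with $\Delta=\Delta_k$ and accuracy level
\begin{equation*}
	w_k:=(1-\lambda)\left(\tfrac12-\theta\right)\mu m_k .
\end{equation*}
Substituting $w_k$ and $\Delta_k=\theta\mu m_k/(\sqrt n L_f)$ into~\eqref{equ:NFGS} should reproduce the stated lower bound on $N_k$ term by term.
\begin{itemize}
	\item The $\|\nabla f(x_k)\|^2$ term becomes $9n\|\nabla f(x_k)\|^2/\big(\nu\mu^2(1-\lambda)^2(\tfrac12-\theta)^2 m_k^2\big)$.
	\item The $L_f^2\Delta_k^2$ term becomes $3\theta^2(n+2)(n+4)/\big(4\nu(1-\lambda)^2(\tfrac12-\theta)^2\big)$, because the factors $n$, $L_f^2$, $\mu^2$ and $m_k^2$ all cancel.
\end{itemize}
This cancellation check is routine but is the one I would verify first. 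Given it, with probability at least $1-\nu$,
\begin{equation*}
	\|g^{\mathrm{GS}}_k-\nabla f(x_k)\| \le \sqrt n L_f\Delta_k + w_k = \mu m_k\left(\theta+(1-\lambda)\left(\tfrac12-\theta\right)\right) < \tfrac{\mu}{2}m_k \le \tfrac{\mu}{2}\|s_{k-1}\|.
\end{equation*}
The strict inequality uses $\lambda>0$ and $\theta<1/2$.

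The same choice at index $k-1$ gives $\|g^{\mathrm{GS}}_{k-1}-\nabla f(x_{k-1})\| < \tfrac{\mu}{2}m_{k-1}$. Since $m_{k-1}=\min\{\|s_{k-2}\|,\|s_{k-1}\|\}\le\|s_{k-1}\|$, this term is also strictly below $\mu\|s_{k-1}\|/2$. This explains why both $\Delta_k$ and $N_k$ involve the minimum over two consecutive steps. Adding the two bounds and substituting them into the displayed inequality yields $(y^{\mathrm{GS}}_{k-1})^{\top}s_{k-1}>0$.

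The main obstacle is the probability bookkeeping, since two random events are involved. I would argue as follows. The estimator $g^{\mathrm{GS}}_{k-1}$ was formed, and its accuracy event already secured, at iteration $k-1$ under the same rule. Conditionally on that event, the fresh samples at iteration $k$ give the required accuracy with probability at least $1-\nu$. If the statement is read unconditionally, a union bound only gives $1-2\nu$. I would then note that replacing $\nu$ by $\nu/2$ in $N_k$ recovers $1-\nu$ without changing the order of the sample bound, so the conclusion is unaffected up to constants.
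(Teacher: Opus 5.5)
Your proposal is correct and follows the paper's proof essentially step for step. The paper makes the same choice $w_k=(1-\lambda)(\frac12-\theta)\mu\min\{\|s_{k-1}\|,\|s_k\|\}$ in Lemma~\ref{lem:GSerror}, uses the same strong-convexity/Cauchy--Schwarz chain with $\min\{\|s_{k-2}\|,\|s_{k-1}\|\}\le\|s_{k-1}\|$ for the $(k-1)$-th term, and obtains the margin $\lambda(1-2\theta)\mu\|s_{k-1}\|^2>0$. The paper simply asserts probability $1-\nu$ without noting that two accuracy events (at $x_k$ and at $x_{k-1}$) are needed, so your conditioning/union-bound remark (replace $\nu$ by $\nu/2$) is a legitimate tightening of the paper's argument rather than a departure from it.
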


\begin{proof}
	Take
	$
	w_k=(1-\lambda)  (1/2-\theta  )\mu \min\{\|s_{k-1}\|, \|s_k \|  \}
	$
	for all $k$.
	By Lemma \ref{lem:GSerror}, if
	\begin{equation*}
		\begin{aligned}
			N_k &\ge \frac{3n}{\nu w_k^2}\left[ 3\|\nabla f(x_k)\|^2+\frac{L_f^2\Delta_k^2}{4}(n+2)(n+4)\right]  \\
			& = \frac{9n}{\nu \mu^2(1-\lambda)^2\left(\frac{1}{2}-\theta \right)^2} \left( \frac{\|\nabla f(x_k)\|}{\min\{\|s_{k-1}\|, \|s_k \|  \}} \right)^2  + \frac{3\theta^2 (n+2)(n+4)}{4\nu (1-\lambda)^2\left(\frac12 - \theta \right)^2 },
		\end{aligned}
	\end{equation*}
	then
	\begin{equation}\label{equ:GSerror}
		\left\| g^{\mathrm{GS}}_k - \nabla f(x_k) \right\| \le \sqrt{n}L_f \Delta_k + w_k
	\end{equation}
	holds with probability at least $1-\nu$.
	
	By the $\mu$-strong convexity of $f$ and \eqref{equ:GSerror},
	\begin{equation*}
		\begin{aligned}
			( y^{\mathrm{GS}}_{k-1}) ^{\top}s_{k-1} &= \left( g^{\mathrm{GS}}_k -g^{\mathrm{GS}}_{k-1} \right) ^{\top}s_{k-1} \\
			&=  ( \nabla f(x_k)-\nabla f(x_{k-1}) ) ^{\top}s_{k-1}  + \left(g^{\mathrm{GS}}_k -\nabla f(x_k)+\nabla f(x_{k-1})- g^{\mathrm{GS}}_{k-1} \right) ^{\top}s_{k-1} \\
			&\ge \mu \|s_{k-1} \|^2  - \Big( \left\|g^{\mathrm{GS}}_k-\nabla f(x_k) \right\|  +\left\|\nabla f(x_{k-1})- g^{\mathrm{GS}}_{k-1} \right\| \Big)  \|s_{k-1}  \| \\
			&\ge  \|s_{k-1} \|  ( \mu \|s_{k-1}  \| - \sqrt{n}L_f(\Delta_k+\Delta_{k-1}) - (w_k+w_{k-1}) )  \\
			& \ge \|s_{k-1} \| \Big( \mu  \|s_{k-1} \| - \sqrt{n}L_f \frac{2\theta \mu }{\sqrt{n}L_f} \|s_{k-1} \|   - 2 (1-\lambda) (1/2-\theta  )\mu  \|s_{k-1} \|\Big) \\
			& = \lambda (1-2\theta) \mu  \|s_{k-1}  \|^2 > 0.
		\end{aligned}
	\end{equation*}
	This completes the proof.
\end{proof}

Denote
\begin{equation*}
	g^{\mathrm{SS}}_k :=\frac{n}{N_k}\sum_{i=1}^{N_k}\frac{f(x_k+\Delta_k  u_i)-f(x_k)}{\Delta_k}u_i,
\end{equation*}
where $u_i\sim \mathcal{U}(\mathcal{S}(0,I))$, and
$	y^{\mathrm{SS}}_{k-1}:= g^{\mathrm{SS}}_k - g^{\mathrm{SS}}_{ k-1} $.

\begin{theorem}\label{thm:SScurv}
	Suppose that Assumption~\ref{as:problem1} holds and $f$ is $\mu$-strongly convex. If we take
	\begin{equation*}
		\begin{aligned}
			\Delta_k &= \frac{\theta \mu}{\sqrt{n}L_f}\min\{\|s_{k-1}\|, \|s_k \|  \}, \nonumber \\
			N_k & \ge \left[\frac{6n}{\mu^2 (1-\lambda)^2\left(\frac{1}{2}-\frac{\theta}{\sqrt{n}} \right)^2} \left( \frac{\|\nabla f(x_k)\|}{\min\{\|s_{k-1}\|, \|s_k \|  \}}\right)^2 \right.\nonumber \\
			& \hspace{2em} + \frac{4n}{3\mu (1-\lambda)\left(\frac{1}{2}-\frac{\theta}{\sqrt{n}} \right)} \frac{\|\nabla f(x_k)\|}{\min\{\|s_{k-1}\|, \|s_k \|  \}}   \nonumber \\
			& \left. \hspace{2em} +
			\frac{2\theta \sqrt{n}}{3(1-\lambda)\left(\frac{1}{2}-\frac{\theta}{\sqrt{n}} \right)^2} +
			\frac{3\theta^2 n}{2(1-\lambda)^2\left(\frac{1}{2}-\frac{\theta}{\sqrt{n}} \right)^2}\right]\log\frac{n+1}\nu 
		\end{aligned}
	\end{equation*}
	for all $k$, then
	$( y^{\mathrm{SS}}_{k-1}) ^{\top}s_{k-1}>0$ holds
	with probability at least $1-\nu$.
\end{theorem}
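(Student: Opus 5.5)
The argument will mirror the proof of Theorem~\ref{thm:GScurv}, with Lemma~\ref{lem:SSerror} in place of Lemma~\ref{lem:GSerror}. The one structural change is in how the per-iteration radius and sample-average budgets are allocated. For spherical smoothing the bias term in Lemma~\ref{lem:SSerror} is $L_f\Delta$ rather than $\sqrt{n}L_f\Delta$. With $\Delta_k=\frac{\theta\mu}{\sqrt{n}L_f}m_k$, where $m_k:=\min\{\|s_{k-1}\|,\|s_k\|\}$, the bias is $L_f\Delta_k=\frac{\theta\mu}{\sqrt{n}}m_k$. I would therefore take
\begin{equation*}
w_k=(1-\lambda)\Big(\tfrac12-\tfrac{\theta}{\sqrt n}\Big)\mu m_k ,
\end{equation*}
which is positive since $\theta<1/2\le\sqrt n/2$.

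The first step is to verify that the prescribed lower bound on $N_k$ dominates the right-hand side of \eqref{equ:NFSS} with $x=x_k$, $\Delta=\Delta_k$ and $w=w_k$. I would substitute and expand the four terms. First, $\frac{6n^2}{w_k^2}\cdot\frac{\|\nabla f(x_k)\|^2}{n}$ gives the first summand. Second, $\frac{6n^2}{w_k^2}\cdot\frac{L_f^2\Delta_k^2}{4}=\frac{6n^2}{4w_k^2}\cdot\frac{\theta^2\mu^2m_k^2}{n}$ simplifies to $\frac{3\theta^2 n}{2(1-\lambda)^2(1/2-\theta/\sqrt n)^2}$. Third, $\frac{2n}{3w_k}\cdot 2\|\nabla f(x_k)\|$ gives the second summand. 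Fourth, $\frac{2n}{3w_k}L_f\Delta_k=\frac{2\theta\sqrt n}{3(1-\lambda)(1/2-\theta/\sqrt n)}$ is at most the stated third summand, because $(1/2-\theta/\sqrt n)\le1$. This is routine bookkeeping, and it is the only place where I expect care to be needed: the powers of $n$, $(1-\lambda)$ and $(1/2-\theta/\sqrt n)$ have to match. Lemma~\ref{lem:SSerror} then gives $\|g^{\mathrm{SS}}_k-\nabla f(x_k)\|\le L_f\Delta_k+w_k$ with probability at least $1-\nu$.

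Next I would bound the curvature exactly as in Theorem~\ref{thm:FDcurv}. Strong convexity and Cauchy--Schwarz give
\begin{equation*}
(y^{\mathrm{SS}}_{k-1})^\top s_{k-1}\ge\|s_{k-1}\|\big(\mu\|s_{k-1}\|-L_f(\Delta_k+\Delta_{k-1})-(w_k+w_{k-1})\big).
\end{equation*}
Since $m_k,m_{k-1}\le\|s_{k-1}\|$, both $L_f\Delta_k$ and $L_f\Delta_{k-1}$ are at most $\frac{\theta\mu}{\sqrt n}\|s_{k-1}\|$, and similarly for the $w$'s. This yields
\begin{equation*}
(y^{\mathrm{SS}}_{k-1})^\top s_{k-1}\ge\lambda\Big(1-\tfrac{2\theta}{\sqrt n}\Big)\mu\|s_{k-1}\|^2>0 .
\end{equation*}

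The probability statement is handled the same way as in Theorem~\ref{thm:GScurv}. The error bound is invoked at both $x_k$ and $x_{k-1}$, and the statement is interpreted in the same sense as there, with both events holding at level $1-\nu$ as in that proof. If a rigorous joint bound is wanted, one notes that a union bound gives $1-2\nu$, and replacing $\nu$ by $\nu/2$ only changes the logarithm by a constant.
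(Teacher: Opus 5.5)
Your proposal is correct and follows essentially the same route as the paper: the same choice $w_k=(1-\lambda)\bigl(\tfrac12-\tfrac{\theta}{\sqrt n}\bigr)\mu\min\{\|s_{k-1}\|,\|s_k\|\}$, the same invocation of Lemma~\ref{lem:SSerror}, and the same strong-convexity estimate giving $(y^{\mathrm{SS}}_{k-1})^{\top}s_{k-1}\ge\lambda\bigl(1-\tfrac{2\theta}{\sqrt n}\bigr)\mu\|s_{k-1}\|^2>0$. Your two extra remarks are accurate refinements of points the paper passes over. First, the term $\frac{2n}{3w_k}L_f\Delta_k$ carries only the first power of $\bigl(\tfrac12-\tfrac{\theta}{\sqrt n}\bigr)$, so it is dominated by the stated summand rather than equal to it as the paper writes; second, the paper also uses the error bound at both $x_k$ and $x_{k-1}$ without a union bound, so a fully rigorous joint guarantee gives $1-2\nu$ or needs $\nu/2$ inside the logarithm, just as you note.
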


\begin{proof}
	Take
	$ w_k= (1-\lambda)\left(\frac{1}{2}-\frac{\theta}{\sqrt{n}} \right)\mu \min\{\|s_{k-1}\|, \|s_k \|  \}$
	for all $k$. By Lemma~\ref{lem:SSerror}, if
	\begin{equation*}
		\begin{aligned}
			N_k & \ge  \left[\frac{6n^2}{w_k^2}\left(\frac{\|\nabla f(x_k)\|^2}n+\frac{L_f^2\Delta_k^2}4\right)+\frac{2n}{3w_k} (2\|\nabla f(x_k)\|+L_f\Delta_k )\right]\log\frac{n+1}\nu \\
			&= \left[\frac{6n}{\mu^2 (1-\lambda)^2\left(\frac{1}{2}-\frac{\theta}{\sqrt{n}} \right)^2} \left( \frac{\|\nabla f(x_k)\|}{\min\{\|s_{k-1}\|, \|s_k \|  \}}\right)^2 \right.\nonumber \\
			& \hspace{2em} + \frac{4n}{3\mu (1-\lambda)\left(\frac{1}{2}-\frac{\theta}{\sqrt{n}} \right)} \frac{\|\nabla f(x_k)\|}{\min\{\|s_{k-1}\|, \|s_k \|  \}}   \nonumber \\
			& \left. \hspace{2em} +
			\frac{2\theta \sqrt{n}}{3(1-\lambda)\left(\frac{1}{2}-\frac{\theta}{\sqrt{n}} \right)^2} +
			\frac{3\theta^2 n}{2(1-\lambda)^2\left(\frac{1}{2}-\frac{\theta}{\sqrt{n}} \right)^2}\right]\log\frac{n+1}\nu,
		\end{aligned}
	\end{equation*}
	then
	\begin{equation}\label{equ:SSerror}
		\left\| g^{\mathrm{SS}}_k - \nabla f(x_k) \right\| \le L_f \Delta_k + w_k
	\end{equation}
	holds with probability at least $1-\nu$.
	
	By the $\mu$-strong convexity of $f$ and \eqref{equ:SSerror},
	\begin{equation*}
		\begin{aligned}
			( y_{k-1}^{\mathrm{SS}}) ^{\top}s_{k-1}
			&\ge \mu \|s_{k-1}  \|^2  - \Big( \left\| g^{\mathrm{SS}}_k -\nabla f(x_k) \right\|  +\left\|\nabla f(x_{k-1})- g^{\mathrm{SS}}_{ k-1} \right\| \Big)  \|s_{k-1}  \| \\
			& \ge  \|s_{k-1} \| ( \mu \|s_{k-1}  \| - L_f(\Delta_k + \Delta_{k-1}) - (w_k+w_{k-1})  )  \\
			& \ge \|s_{k-1} \| \bigg( \mu \|s_{k-1}  \| - L_f \frac{2\theta \mu }{\sqrt{n}L_f} \|s_{k-1} \|  - 2(1-\lambda) \left(\frac{1}{2}-\frac{\theta}{\sqrt{n}} \right)\mu  \|s_{k-1} \|\bigg) \\
			& = \lambda \left(1-\frac{2\theta}{\sqrt{n}} \right)\mu  \|s_{k-1} \|^2 > 0.
		\end{aligned}
	\end{equation*}
	This completes the proof.
\end{proof}

It is shown in Theorems~\ref{thm:GScurv} and \ref{thm:SScurv} that the curvature conditions based on Gaussian smoothing and spherical smoothing hold with probability $1-\nu$ if the upper bounds on the sampling numbers are of order larger than $\varOmega(n^2/\nu)$ and $\varOmega(n\log(n/\nu))$, respectively,
which are bigger than the fixed $n$ samples for the forward difference-based curvature condition (cf. Theorem~\ref{thm:FDcurv}).
This suggests that the finite difference may be more suitable than smoothing techniques for the BFGS update.
However, it is still a challenging problem to provide lower bounds on the sampling numbers for the Gaussian smoothing and spherical smoothing such that the curvature conditions are satisfied.

\subsection{Conclusions}
In this paper, we develop a unified derivative-free proximal Newton-type framework for composite optimization that accommodates a broad class of gradient and Hessian estimation methods. Notably, this is the first work formalizing proximal Newton-type algorithms under the full zeroth-order setting, whereas existing studies mainly focus on first-order schemes or adopt cubic/regularized cubic Newton structures rather than the proximal Newton-type formulation.
We derive the iteration and oracle complexity bounds for the algorithm to attain an $\epsilon$-optimal solution under both nonconvex and strongly convex settings.
We also establish its local R-superlinear convergence under standard assumptions, a desirable theoretical outcome that is uncommon in derivative-free optimization. Unlike existing superlinear convergence results relying on lazy Hessian updates and full finite-difference approximation, our analysis is built upon the Dennis--Mor\'{e} condition and applies to a much wider range of Hessian estimation strategies.
Furthermore, we theoretically verify for the first time that BFGS is more compatible with deterministic finite-difference gradient estimators than with smoothing-based counterparts such as Gaussian and spherical smoothing schemes, providing a theoretical justification for the widely observed empirical choice in existing numerical implementations.
Numerical experiments are also presented to demonstrate the efficiency of the proposed algorithm.

	\bibliographystyle{amsplain}

\end{document}